\newsavebox\myboxA
\newsavebox\myboxB
\newlength\mylenA
\def\equationautorefname~#1\null{%
  Equation~(#1)\null
}
\newcounter{itemadded}
\newtheorem{theorem}{Theorem}[section]
\newtheorem{corollary}{Corollary}[theorem]
\newtheorem{lemma}[theorem]{Lemma}
\newtheorem{proposition}[theorem]{Proposition}
\newtheorem{definition}[theorem]{Definition}
\DeclareSymbolFont{cyrletters}{OT2}{wncyr}{m}{n}
\DeclareMathSymbol{\Sha}{\mathalpha}{cyrletters}{"58}
\DeclarePairedDelimiter\floor{\lfloor}{\rfloor}
\theoremstyle{definition}
\newtheorem{example}[theorem]{Example}
\newtheorem{remark}[theorem]{Remark}
\newtheorem{notation}[theorem]{Notation}
\newcommand*\xoverline[2][0.75]{%
    \sbox{\myboxA}{$\m@th#2$}%
    \setbox\myboxB\null% Phantom box
    \ht\myboxB=\ht\myboxA%
    \dp\myboxB=\dp\myboxA%
    \wd\myboxB=#1\wd\myboxA% Scale phantom
    \sbox\myboxB{$\m@th\overline{\copy\myboxB}$}%  Overlined phantom
    \setlength\mylenA{\the\wd\myboxA}%   calc width diff
    \addtolength\mylenA{-\the\wd\myboxB}%
    \ifdim\wd\myboxB<\wd\myboxA%
       \rlap{\hskip 0.5\mylenA\usebox\myboxB}{\usebox\myboxA}%
    \else
        \hskip -0.5\mylenA\rlap{\usebox\myboxA}{\hskip 0.5\mylenA\usebox\myboxB}%
    \fi}
\newcommand{\Z}{\mathbb{Z}}
\newcommand{\ab}{\underline{a}}
\newcommand{\ml}{\mu_m}
\newcommand{\smvee}{\raise0.4ex\hbox{$\scriptscriptstyle\vee$}}
\newcommand{\ag}{\mathcal{A}_g}
\newcommand{\mg}{\mathcal{M}_g}
\newcommand{\OO}{\mathcal{O}}
\newcommand{\p}{\mathfrak{p}}
\newcommand{\Q}{\mathbb{Q}}
\newcommand{\Spec}{\text{Spec}}
\newcommand{\im}{\text{Im}}
\newcommand{\C}{\mathbb{C}}
\newcommand{\fpb}{\overline{\mathbb{F}}_p}
\newcommand{\sigp}{\sigma_p}
\newcommand{\Po}{\mathbb{P}^1}
\newcommand{\mmg}{\mathcal{M}_{G}}
\newcommand{\lra}{(m,r,\underline{a})}
\newcommand{\zgr}{\mathcal{M}(G,r,\underline{a})}
\newcommand{\gra}{(G,r,\underline{a})}
\newcommand{\shmf}{\text{Sh}(\ml,\underline{f})}
\newcommand{\shgf}{\text{Sh}(G,\underline{f})}
\newcommand{\go}{\gamma_1}
\newcommand{\gt}{\gamma_2}
\newcommand{\gth}{\gamma_3}
\newcommand{\ts}{\tau^*}
\newcommand{\wt}{w_{\tau}}
\newcommand{\tp}{\tau'}
\newcommand{\wts}{w_{\tau^*}}
\newcommand{\ti}{\tau_i}
\newcommand{\eis}{\eta_{i,s}}
\newcommand{\tis}{\ti^*}
\newcommand{\psid}{\check{\psi}}
\newcommand{\phid}{\check{\phi}}
\newcommand{\hy}{H^1(C,\OO_C)}
\newcommand{\hyd}{H^0(C,\Omega_C)}
\newcommand{\pby}{\floor*{p\langle\frac{ia_1}{m}\rangle}}
\newcommand{\pbr}{\floor*{p\langle\frac{ia_r}{m}\rangle}}
\newcommand{\pbj}{\floor*{p\langle\frac{ia_j}{m}\rangle}}
\newcommand{\pbis}{\floor*{p\langle\frac{b_i a_s}{m}\rangle}}
\newcommand{\pbc}{\floor*{p\langle\frac{ia_c}{m}\rangle}}
\newcommand{\pbcm}{\floor*{p\langle\frac{ia_{c-1}}{m}\rangle}}
\newcommand{\pbk}{\floor*{p\langle\frac{ia_k}{m}\rangle}}
\newcommand{\pfy}{\floor*{p\frac{ia_1}{m}}}
\newcommand{\pfk}{\floor*{p\frac{ia_k}{m}}}
\newcommand{\fy}{\floor*{\frac{ia_1}{m}}}
\newcommand{\fr}{\floor*{\frac{ia_r}{m}}}
\newcommand{\fk}{\floor*{\frac{ia_k}{m}}}
\newcommand{\fky}{\floor*{\frac{ka_1}{m}}}
\newcommand{\fkr}{\floor*{\frac{ka_r}{m}}}
\newcommand{\pfr}{\floor*{p\frac{ia_r}{m}}}
\newcommand{\ci}{c_i(s_i-pj)}
\newcommand{\Ci}{C_i(s_i-pj)}
\newcommand{\XI}{X_i(s_i-pj)}
\newcommand{\qi}{Q_i}
\newcommand{\qt}{Q_\tau}
\newcommand{\qtsd}{Q_{\ts}^{\smvee}}
\newcommand{\qpt}{Q_{p\tau}}
\newcommand{\qptsd}{Q_{p\ts}^{\smvee}}
\newcommand{\ocu}{\OO_C(U)}
\newcommand{\ocv}{\OO_C(V)}
\newcommand{\ocuv}{\OO_C(U \cap V)}
\newcommand{\zij}{\zeta_{i,j}}
\newcommand{\zpij}{\zeta_{pi,j'}}
\newcommand{\zpisk}{\zeta_{pi^*,k}}
\newcommand{\opisjp}{\omega_{pi^*,j'}}
\newcommand{\opisk}{\omega_{pi^*,k}}
\newcommand{\phii}{\phi_{\tau_i}}
\newcommand{\psii}{\psi_{\tau_i}}
\newcommand{\psiip}{\psi_{\tau_i}'}
\newcommand{\vip}{v_{pi}}
\newcommand{\vips}{v_{pi^*}}
\newcommand{\pbf}{(x-x_1)^{\pby} \dots (x-x_r)^{\pbr}}
\newcommand{\fis}{f(\tau_i^*)}
\newcommand{\ft}{f(\tau)}
\newcommand{\fts}{f(\ts)}
\newcommand{\pit}{p^i\tau}
\newcommand{\pits}{p^i\ts}
\newcommand{\aip}{A_{l-1}\circ \dots \circ A_{0}}
\newcommand{\Span}{\text{Span}}
\newcommand{\au}{\underline{a}}
\newcommand{\n}{\text{new}}
\newcommand{\ord}{\text{ord}}
\newcommand{\gal}{\text{Gal}}
\newcommand{\galq}{\gal(\overline{\Q}/\Q)}
\newcommand{\fb}{\underline{f}}
\newcommand{\og}{\OO_G}
\newcommand{\Hom}{\text{Hom}}
\newcommand{\orho}{\OO_{\tau}}
\newcommand{\tg}{\mathcal{T}_G}
\newcommand{\kij}{\chi_{i,j}}
\newcommand{\ct}{C_{\underline{x}}}
\newcommand{\mlra}{\mathcal{M}(m,r,\au)}
\newcommand{\mgra}{\mathcal{M}(G,r,\au)}
\newcommand{\mgb}{\overline{\mathcal{M}_G}}
\newcommand{\mG}{\mathcal{M}_G}
\newcommand{\tb}{\underline{x}}
\newcommand{\ub}{\underline{b}}
\newcommand{\xb}{\underline{x}}
\newcommand{\tguh}{\tg^H}
\newcommand{\tghn}{\tg^{H,\text{new}}}
\newcommand{\tgun}{\tg^{\text{new}}}
\newcommand{\MM}{\mathcal{M}}
\newcommand{\gb}{\gamma_b}
\newcommand{\mts}{\mathcal{M}(\gamma_{t,s})}
\newcommand{\gts}{\gamma_{t,s}}
\newcommand{\FF}{\mathcal{F}}
\begin{document}

\title{Abelian covers of $\mathbb{P}^1$ of $p$-ordinary Ekedahl-Oort type}

\author{Yuxin Lin}
\address{Department of Mathematics,
California Institute of Technology
Pasadena, CA 91125, USA}
\email{yuxinlin@caltech.edu}

\author{Elena Mantovan}
\address{Department of Mathematics,
California Institute of Technology
Pasadena, CA 91125, USA}
\email{mantovan@caltech.edu}

\author{Deepesh Singhal}
\address{Department of Mathematics,
University of California, Irvine
Irvine, CA 92697, USA}
\email{singhald@uci.edu}

\date{}
\maketitle 
\begin{abstract}
Given a family of abelian covers of $\Po$ and a prime $p$ of good reduction, by considering the associated Deligne--Mostow Shimura variety, we obtain lower bounds for the Ekedahl-Oort types, and the Newton polygons, at $p$ of the curves in the family. In this paper, we investigate whether such lower bounds are sharp. In particular, we prove sharpeness when the number of branching points is at most five and $p$ sufficiently large. Our result is a generalization under stricter assumptions of \cite[Theorem 6.1]{Irene} by Bouw, which proves the analogous statement for the $p$-rank, and it relies on the notion of Hasse-Witt triple introduced by Moonen in \cite{Moonen algorithm}.  
\end{abstract}

\section{Introduction}
This paper is motivated by the arithmetic Schottky problem in positive characteristics, which investigates which mod-$p$ invariants of abelian varieties occur for Jacobians  of smooth curves. We restrict our attention to the case of Jacobians of abelian covers of $\Po$.

Let $G$ be a finite abelian group and $p$ be a prime not dividing $ |G|$. We consider $\mmg$, the moduli space of $G$-covers of $\Po$. On each irreducible component of $\mmg$, the monodromy datum $(G,r,\ab)$ of the covers is constant, and we denote such component by $\mathcal{M}(G,r,\ab)$ (in the notation $(G,r, \ab)$, $r$ denotes the number of branched points and $\ab$ the inertia type). 
By construction, the image of $\mgra$ under the Torelli map $T$ is contained in a special subvariety of the Siegel variety. We denote by $\shgf$  the smallest PEL-type Shimura variety containing $T(\mgra)$; its Shimura datum depends on the monodromy datum $\gra$.

The inclusion of $T(\zgr)$ inside $\shgf$ gives rise to natural lower bounds for the Ekedahl-Oort types and the Newton polygons occurring for the Jacobians of curves parametrized by $\mathcal{M}(G,r,\ab)$.
More precisely, let $p$ be a prime not dividing $|G|$. Then, both $\mgra$ and $\shgf$ have good reduction at $p$, and the $p$-rank, Ekedahl–Oort and Newton stratifications of $\mgra_{\fpb}$ are induced from those on $\shgf_{\fpb}$. In particular, the maximal $p$-rank and the lowest Ekedahl–Oort type and Newton polygon occurring on $\shgf_{\fpb}$  are respectively upper and lower bounds for those occurring on $\mgra_{\fpb}$. For example, by \cite[Theorem 1.6.3]{Wedhorn}, when $p$ is not totally split in the reflex field of $\shgf_{\fpb}$, the ordinary stratum of $\shgf_{\fpb}$ is empty, and hence so is that of $\mgra_{\fpb}$.  It is natural to ask whether these bounds are sharp. More precisely, given a monodromy datum $\gra$ and a prime $p\nmid |G|$, one may ask when the intersection of $T(\zgr)$ with each of the unique open strata of $\shgf_{\fpb}$ is non-empty. When $r\leq 3$, $\dim\zgr=\dim\shgf=0$, and the statement is trivial. On the other hand, by the Coleman--Oort Conjecture, if $r\geq 4$, $\dim\zgr < \dim\shgf$ except in finitely many instances (see \cite{Moonen 10} and \cite{Moonen Oort}). 

For $p$ large, the sharpness of the $p$-rank bound follows as a special case of \cite[Theorem 6.1]{Irene}. 
In this paper, we investigate the sharpness of the lower bounds for Ekedahl-Oort types and Newton polygons, and positively answer our question when $p$ is large and the number of branched points is at most $5$.
By \cite[Theorem 1.3.7]{Moonen EO type formula},  the unique open Ekedahl–Oort and Newton strata of $\shgf_{\fpb}$ agree, hence the two problems are equivalent. 
  On the other hand, the open Newton/Ekedahl-Oort stratum is in general, properly contained in the maximal $p$-rank stratum. Thus our result is a refinement of the special case of \cite[Theorem 6.1]{Irene} for covers of $\Po$ branched at at most 5 points.

More precisely, we prove the following result. For $p\nmid |G|$, we refer to the lowest Ekedahl--Oort type and Newton polygon occurring on $\shgf_{\fpb}$ as $(G,\underline{f})$-ordinary.
\begin{theorem}\label{abelian cover}
Let $(G,r,\underline{a})$ be a monodromy datum for abelian $G$-covers of $\Po$, branched at $r$ points, and $p$ a rational prime.  Assume $r\leq 5 $ and $p>|G|(r-2)$.
Then the Ekedahl–Oort type and Newton polygon of the generic $G$-cover of $\Po$ over $\fpb$, with monodromy datum $(G,r,\underline{a})$, are $(G,\underline{f})$-ordinary.
\end{theorem}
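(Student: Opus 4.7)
The plan is to combine three facts: first, by \cite[Theorem 1.3.7]{Moonen EO type formula}, the unique open Ekedahl--Oort and Newton strata of $\shgf_{\fpb}$ coincide, so the Newton polygon statement follows automatically from the Ekedahl--Oort statement; second, the $(G,\underline{f})$-ordinary Ekedahl--Oort stratum is open in $\shgf_{\fpb}$, and hence its pullback to $\mgra_{\fpb}$ is open, reducing the theorem to exhibiting a single smooth $G$-cover with monodromy datum $\gra$ whose Ekedahl--Oort type is $(G,\underline{f})$-ordinary; third, by the theory of \cite{Moonen algorithm}, the Ekedahl--Oort type of such a cover $C$ can be read off from the Hasse--Witt triple on $H^1_{dR}(C)$, which under the $G$-action decomposes into isotypic components indexed by the characters $\chi$ of $G$.

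Using the Chevalley--Weil formula, for each character $\chi$ the dimensions of the $\chi$-isotypic components of $H^1(C,\OO_C)$ and $H^0(C,\Omega_C)$ are determined by $\gra$, and the $\chi$-isotypic piece can be computed on the cyclic subcover $y^m = \prod_{k=1}^r (x-x_k)^{a_k}$ with $m=\ord(\chi)$. Concretely, I would proceed as follows: (i) use Moonen's algorithm to translate the $(G,\underline{f})$-ordinary condition into a system of non-vanishing conditions on certain minors of the Hasse--Witt matrix in each $\chi$-isotypic component; (ii) using the explicit differential basis $\oij = x^{j}\,dx/y^i$ coming from the cyclic-cover model, write these matrix entries as integer polynomials in the branch points $x_1,\dots,x_r$; (iii) bound the degree of these polynomials by a quantity of size at most $|G|(r-2)$, exploiting that for $r\leq 5$ each isotypic dimension is small enough to keep the relevant determinants under explicit control; and (iv) invoke the hypothesis $p>|G|(r-2)$ to conclude that these polynomials are not identically zero modulo $p$, hence non-vanishing on a non-empty Zariski open subset of the configuration space of branch points. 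Since being $(G,\underline{f})$-ordinary is the generic condition on $\shgf_{\fpb}$, the existence of any single smooth cover satisfying these non-vanishing conditions forces the generic cover in $\mgra_{\fpb}$ to have Ekedahl--Oort type $(G,\underline{f})$.

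The main obstacle lies in steps (i)--(ii): even for $r\leq 5$, one must identify, component by component, exactly which minors of the Hasse--Witt matrix encode the $(G,\underline{f})$-ordinary condition prescribed by Moonen's type formula, and then check that these minors are not combinatorially forced to vanish. The restriction $r\leq 5$ enters crucially here: each $\chi$-isotypic component then has dimension small enough that the Hasse--Witt triple reduces to a manageable collection of determinantal conditions on polynomials of degree bounded by $|G|(r-2)$. For $r\geq 6$ the isotypic dimensions can grow without bound, and the direct Hasse--Witt computation would have to be replaced by an inductive argument, most likely via degeneration to boundary strata corresponding to covers with fewer branch points; this explains why the argument, as in \cite[Theorem 6.1]{Irene}, stops at $r=5$.
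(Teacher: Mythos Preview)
Your overall strategy---reduce to cyclic covers via Lemma~\ref{equivalence two}, invoke Moonen's equivalence between Hasse--Witt triples and Ekedahl--Oort types, and show that the relevant polynomials in the branch points are generically nonzero---matches the paper's. But two of your steps contain genuine gaps.

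First, the degree argument in (iii)--(iv) is wrong. The entries of the (extended) Hasse--Witt matrix are not polynomials of degree bounded by $|G|(r-2)$; they are homogeneous of degree roughly $p$ (for instance the $(j',j)$-entry of $\phi_{\tau_i}$ has degree $s_i-pj+j'$, and by Proposition~\ref{maximal monomial} the entries of $\psi'_{\tau_i}$ have degree $s_i-p+r-1$). So one cannot conclude nonvanishing ``modulo $p$'' from a degree bound. In the paper the hypothesis $p>m(r-2)$ is used quite differently: it guarantees that the exponents $\lfloor p\langle ia_s/m\rangle\rfloor$ are at most $p-2$, which is what makes the maximal-monomial identification in \cite[Lemma~6.5]{Irene} and Proposition~\ref{maximal monomial} work, and what forces $|\eta_{i,s}|<p$ in the proof of Theorem~\ref{signature 1}, so that distinct index-functions $J$ produce distinct leading monomials and no cancellation can occur in the sum \eqref{sumRJ}.

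Second, step (i) is too vague to be a proof. The condition ``$(G,\underline{f})$-ordinary'' is \emph{not} simply a collection of minor-nonvanishing conditions on the Hasse--Witt matrix $\phi$; it involves iterated compositions mixing $\phi$ and $\psi$ (equivalently $F$ and $V'$) across a whole Frobenius orbit, and one needs a criterion that can be checked inductively in the values of $f(\tau^*)$. This is the content of Theorem~\ref{numerical criteria} and Proposition~\ref{criteriasecond}: one proves maximality of $w_\tau$ for increasing signature levels $f_{\OO,1}<f_{\OO,2}<\cdots$, using at each step that lower levels are already known to be maximal. The case analysis in the final proof (Lemma~\ref{smallest signature}, Corollary~\ref{Largest signature}, Theorem~\ref{signature 1}) is exactly what makes this induction close when $g(\OO)=r-2\leq 3$; without isolating that inductive structure, ``identify which minors encode the condition'' is not an executable step. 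Finally, your last remark is inaccurate: Bouw's theorem in \cite{Irene} is about the $p$-rank and carries no restriction $r\leq 5$; that restriction is new here and comes from the case analysis just described, not from \cite{Irene}.
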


The condition  $p>|G|(r-2)$ is the same as in \cite[Theorem 6.1]{Irene}. 
The restriction $r\leq 5$ is due to the complexity of the computations. By combining Theorem \ref{abelian cover}  with the results in \cite[Section 4]{clutching argument}, for any $r\geq 6$ we construct infinitely many examples of monodromy data with $r$ branched points that satisfy the statement of Theorem \ref{abelian cover} (see Remark \ref{anyrl}). 

As an application, Theorem \ref{abelian cover} yields new examples
of Newton polygons that occur for Jacobians of smooth curves, and
of unlikely intersections of the Torelli locus with Newton strata in Siegel varieties (see Remarks \ref{newNP} and \ref{unlikely}).

We describe our strategy. 
In \cite{Irene}, Bouw considers ramified abelian prime-to-$p$ covers of curves over $\fpb$, and the natural upper bound for their $p$-rank coming from the Galois action. By expressing the $p$-rank of a cover in terms of its Hasse--Witt invariants (the ranks of certain blocks in the Hasse--Witt matrix of the curve), in \cite[Theorem 6.1]{Irene}, Bouw proves that the generic cover has $p$-rank equal to the upper bound. When specialized to covers of $\Po$, the upper bound for the $p$-rank in \cite{Irene} agrees with the $(G,\underline{f})$-ordinary $p$-rank. Recently, in \cite{Moonen algorithm}, Moonen introduces the notion of Hasse--Witt triple for a smooth curve over $\fpb$, as a generalization of the Hasse--Witt invariant which arises from a suitable extension of the Hasse--Witt matrix. In \cite[Theorem 2.8]{Moonen algorithm} Moonen proves that the new notion is equivalent to the Ekedahl-Oort type of the Jacobian of the curve.   
In this paper, we prove that for the generic curve in the family, the Hasse--Witt triple is $(G,\underline{f})$-ordinary by expressing the condition of $(G,\underline{f})$-ordinariness of a cover as the non-vanishing of certain minors of (iterations of) its extended Hasse--Witt matrix.
 Our approach and computations are modeled on those in \cite{Irene}.

The paper is organized as follows. 
In Section \ref{pre}, we recall the notions of $(G,\underline{f})$-ordinary Ekedahl--Oort type and Newton polygon,  and Moonen's notion of Hasse-Witt triple. In Section \ref{reduction argument}, we reduce the proof of Theorem \ref{abelian cover} to the case when $G$ is a cyclic group (Lemma \ref{equivalence two}). In Section \ref{combi}, we give a criterion for $(G,\underline{f})$-ordinaryness in terms of Hasse--Witt triples, given as (finitely many) rank conditions on iterations of the extended Hasse-Witt matrix (Theorem \ref{numerical criteria}). In Section \ref{duality} and \ref{max monomial}, we determine the Hasse-Witt triple of a cyclic cover of the projective line\footnote{
In \cite{Moonen algorithm}, Moonen gives an algorithm for computing the Hasse--Witt triple of a smooth curve given as a complete intersection; this algorithm does not apply in our context (see Remark \ref{strategy}).}, and show the non vanishing of certain entries of the associated extended Hasse--Witt matrix.
In Sections \ref{Section 0,1 in signature} and \ref{new part mu ordinary}, we prove Theorem \ref{abelian cover}, first under some additional conditions, and then in general.

\section{Notations and Preliminaries}\label{pre}

\subsection{The Hurwitz space $\mgra$} 
We briefly recall the definition of the Hurwitz space of abelian covers. We refer to \cite{moduli functor G cover} for a more complete description of the construction of the moduli functor.

\begin{definition}\label{monodromy datum} 
Let $G$ be a finite group.
A monodromy datum of a $G$-cover of $\Po$ is given as $(G,r,\au)$, where $r\geq 3$ is the number of branched points, and $\au\in G^r$ is the inertia type of the cover.
That is, $\au=(\au_1,\au_2,\dots, \au_r)\in G^r$ satisfies 
\begin{enumerate}
    \item $\au_i \neq 0$ in $G$,
    \item $\au_1, \dots, \au_r$ generate $G$,
    \item $\sum_{i=1}^r \au_i = 0$ in $G$.
\end{enumerate}
\end{definition}

Let $\mg$ be the moduli space of smooth projective genus $g$ curves and $\overline{\mg}$ be its Deligne-Mumford compactification, so it is the moduli space of stable curves of genus $g$. While both are algebraic stacks defined over $\Q$, they have a model defined over some open subset of $\Spec(\Z)$.

Let $p$ be a rational prime, $p \nmid |G|$. We use $e$ to denote the exponent of $G$ and consider schemes over $\Z[\frac{1}{e}, \zeta_e]$, where $\zeta_e$ is a primitive $e^{\text{th}}$ root of unity. Let $\mgb$ be the moduli functor on the category of schemes over $\Z[\frac{1}{e}, \zeta_e]$ that classifies admissible stable $G$-covers of $\Po$, and denote by $\mG$  the smooth locus of $\mgb$. Both $\mG$ and $\mgb$ have good reduction modulo $p$. Within each irreducible component of $\mG$, the monodromy datum of the parameterized curves is constant.  Conversely, given a monodromy datum $(G,r,\au)$, the substack $\mgra$ of $\mG$ parametrizing $G$-covers with monodromy $(G,r,\au)$ is irreducible.

\subsection{The Shimura variety $\shgf$}\label{Shimura}  We briefly recall the construction of the PEL type moduli space $\shgf$. We refer to \cite[Section 3.2, 3.3]{Second Paper} for more details.

Let $\mathcal{A}_g$ denote the moduli space of principally polarized abelian varieties of dimension $g$, and $T:\mathcal{M}_g\to \mathcal{A}_g$ the Torelli morphism. 
For $\gra$ a monodromy datum as in Definition \ref{monodromy datum},  we denote by $S\gra$ the largest closed, reduced and irreducible substack of $\mathcal{A}_g$ containing $T(\mgra)$ such that the action of $\Z[G]$ on the Jacobian  of the
universal family of curves over $\mgra$ extends to the universal abelian scheme over $S\gra$. By \cite{delignmostow},  $S\gra$  is an irreducible component of a PEL type moduli space, which we denote by $\shgf$  (see also \cite[Section 3]{Second Paper}).  The moduli space $\shgf$ has a canonical model over $\Z[\frac{1}{e}, \zeta_e]$, and good reduction at all primes $p\nmid |G|$.

We recall the definition of the PEL datum associated with $\shgf$. 
Let $\Q[G]$ denote the $\Q$-group algebra of $G$,  and $*$ the involution on $\Q[G]$ induced from the group homomoprhism $g\mapsto g^{-1}$.  Fix $\xb \in \mgra(\C)$, and denote by $C=\ct$  the associated curve over $\C$. Let  $V=H^1(C,\Q)$ be the first Betti cohomology group of $C$; the action of $G$ on $C$ induces a structure of $\Q[G]$-vector space on $V$. We denote by $\langle\cdot,\cdot\rangle$ the standard skew Hermitian form on $V$, and by $h$ the Hodge structure on $V$. 
The Shimura datum of $\shgf$ is defined by the PEL-datum $(\Q[G], *, V, \langle\cdot, \cdot\rangle, h)$, and is independent of the choice of $\xb$. 

We denote 
 by $\fb$  the {\em signature} of the multiplication by $\Q[G]$ on $V$, that is, the signature of  $\mathcal{G}=GU(V, \langle\cdot, \cdot\rangle )$ the group of $\Q[G]$-linear similitudes of $V$. 
Concretely, $\fb$ is defined as follows. Denote the Hodge structure $h$ of $V$ by $V\otimes_\Q\C=V^+\oplus V^-$, where $V^+=H^0(C,\Omega^1)$, via the Betti-de Rham comparison isomorphism.
Let $\tg$ be the group of characters of $G$, $\tg=\Hom(G,\C^*)$. We define $$\fb:\tg\to \Z \text{ as }f(\tau)=\dim( {V}^{+}_{\tau}),$$ where for each $\tau\in \tg$ we denote by $V_\tau^+$ the subspace of $V^+$ of weight $\tau$. 
The involution on $\Q[G]$ induces an involution on $\tg$, where for $\tau \in \tg$, $\tau^*(g)=\tau(g^*)$.
By definition, $f(\tau^*)=\dim V^-_\tau$, and for each $\tau\in \tg$, $\tau\neq \tau^*$, the pair $(f(\tau),f(\tau^*))$ is the signature of the unitary group $GU(V, \langle\cdot, \cdot\rangle )$ at the real place underlying $\tau$ and $\tau^*$.
The signature $\fb$ can be computed explicitly from the monodromy datum $\gra$ via the Hurwitz--Chevalley-Weil formula (see \cite[Theorem 2.10]{Chevalley}).
For $G$ a cyclic group of size $m$, after identifying $\tg\simeq\{0, 1, \dots m-1\}$, we have (see \cite[Lemma 2.7, Section 3.2]{special family})\begin{align}\label{signature}
    f(\tau_i)=-1+\sum_{k=1}^r \langle\frac{-ia_k}{m}\rangle \text{ for } 1\leq i\leq m-1, \text{ and } f(\tau_0)=0.
\end{align}

With abuse of notation, in the following we denote by $(G,\fb)$ the Shimura datum of $\shgf$.

 \subsubsection{The $(G,\underline{f})$-ordinary stratum at unramified primes}

Let $p$ be a prime not dividing $|G|$. Then $p$ is a prime of good reduction for $\shgf$ and by \cite{viehmann-wedhorn} both the Ekedahl--Oort and Newton stratification of $\shgf_{\fpb}$ are well understood. We briefly recall some of their properties.

The Newton polygon is a discrete invariant that classifies the isogeny class of the $p$-divisible group of a polarized abelian variety over $\fpb$, and is known to induce a stratification on $\mathcal{A}_{g,\fpb}$. By \cite{viehmann-wedhorn}, the Newton polygons corresponding to non-empty strata in $\shgf_{\fpb}$ are in one-to-one correspondent with the elements in the associated Kottwitz set at $p$, its natural partial order agreeing with specialization on $\shgf_{\fpb}$. In \cite{Kottwitz}, this set is denoted by $B(\mathcal{G}_{\Q_p},\mu_h)$, where $\mathcal{G}=GU(V, \langle\cdot, \cdot\rangle )$
and $\mu_h$ is the $p$-adic cocharacter induced by the Hodge structure $h$.
By \cite{Rapoport-RIcharts} and \cite{Wedhorn}, there is a unique maximal element / lowest polygon in $B(\mathcal{G}_{\Q_p},\mu_h)$, corresponding to the unique open (and dense) Newton stratum in  $\shgf_{\fpb}$; this is known as the $\mu$-ordinary polygon at $p$ and in our context can  be computed explicitly from the splitting behaviour of p in the group algebra $\Q[G]$ and the signature $\fb$ (for example, it is ordinary if $p$ is totally split in  $\Q[G]$).  

The Ekedahl-Oort type is a discrete invariant that classifies the isomorphism class of the $p$-kernel of a polarized abelian variety over $\fpb$, and also induces a stratification on  $\mathcal{A}_{g,\fpb}$. By \cite{viehmann-wedhorn}, the Ekedahl--Oort types corresponding to non-empty strata in $\shgf_{\fpb}$ are in one-to-one correspondence with certain elements in the Weyl group of the reductive group $\mathcal{G}$, their dimension equal to the length of the element in the Weyl group.
In particular, there is a unique element of maximal length, corresponding to the unique non-empty open (and dense) Ekedahl--Oort stratum in  $\shgf_{\fpb}$. The Ekedahl--Oort type corresponding to the maximal element is called $p$-ordinary. 

By \cite[Theorem 1.3.7]{Moonen EO type formula}, the $p$-ordinary Ekedahl--Oort stratum and
$\mu$-ordinary Newton stratum of $\shgf_{\fpb}$ agree.
As their definition depends on the Shimura datum $(G,\underline{f})$ and the prime $p$, we refer to it as the $(G,\underline{f})$-ordinary stratum at $p$, and denote the associated Newton polygon by $\mu_p(G,\fb)$.  
An explicit formula for the polygon $\mu_p(G,\fb)$ is given   \cite[Proposition 4.3]{Second Paper}, as a special case of that in \cite[Section 1.2.5]{Moonen EO type formula}. 
We briefly recall some aspects of its construction.

 \subsubsection{The $(G,\underline{f})$-ordinary polygon}
Given the rational prime $p$, we fix an algebraic closure $\overline{\Q}_p$ of $\Q_p$ and an isomorphism $\iota:\Hat{\overline{\Q}}_p\simeq \C$.
We denote by $\overline{\Q}^{\rm un}_p$ the maximal unramified subfield of $\overline{\Q}_p$, and by $\fpb$ its residue field. 
Since $p\nmid |G|$, $\iota$ induces an isomorphism $\tg\simeq \Hom(G,\overline{\Q}^{\rm un*}_{p})$. 
Let $\sigp$ be the Frobenius element in $\gal(\fpb/\mathbb{F}_p)$, then $\sigp$ lifts to an element of $\gal(\overline{\Q}^{\rm un}_p/\Q_p)$, and we consider the action of $\sigp$ on $\tg$ by composition, that is $\tau^{\sigp}(x)=\sigp(\tau(x))=\tau(x)^p$.
This action partitions $\tg$ into Frobenius orbits, and we denote the set of Frobenius orbits of $\tg$ by $\og$. For $\tau \in \tg$, we use $\orho$ to denote the Frobenius orbit of $\tau$. 
The Frobenius orbits in $\og$ are naturally in one-to-one correspondence with the simple factors of $\Q_p[G]$.  From the decomposition into simple factors of $\Q[G]$, $\Q[G] \cong \prod_H K_H $ where $H$ varies among the subgroup of $G$ such that $G/H$ is cyclic, we deduce
\begin{equation}\label{simplefactors}
    \begin{split}
       \Q_p[G] &\cong \prod_{\substack{H \leq G \\ G/H \text{ cyclic }}}K_H \otimes_{\Q}\Q_p \cong  \prod_{\substack{H \leq G \\ G/H \text{ cyclic }}}\prod_{\substack{\orho\\
       \ker(\tau)=H }}K_{\orho},\\
    \end{split}
\end{equation}
where each Frobenius orbit $\OO$ corresponds to a prime $\p$ above $p$ in $K_H$, for $H=\ker(\tau)$,  and $K_{\OO}$ is the completion of $K_H$ at this prime.

Let $C \to \Po$ be an abelian cover parameterized by a point $\xb \in \mgra(\fpb)$, we denote its Jacobian by $J(C)$. Let ${D}={D}(J(C))$ denote the Diedonn\'e module of the abelian variety $J(C)/\fpb$, and $\text{NP}({D})$ its Newton Polygon at $p$.
Then, the structure of $\Z_p[G]$-module on  $D$ induces a decomposition up to isogeny  $D \sim \bigoplus_{\OO \in \og}D_{\OO}$, and hence an equality of Newton polygon  $\text{NP}(D)=\bigoplus_{\OO \in \og}\text{NP}(D_{\OO})$.

From the formula given in \cite[Proposition 4.3]{Second Paper}, the Newton polygon $\mu_p(G, \fb)$ also decomposes as $\mu_p (G,\fb)=\bigoplus_{\OO \in \og}\mu(\OO)$, where for each orbit $\OO$ the polygon $\mu(\OO)$ only depends on the values $(f(\tau))_{\tau \in \OO}$. Furthermore, $\text{NP}(D)= \mu_p(G, \fb)$ if and only if $\text{NP}(D_{\OO})=\mu(\OO)$, for each Frobenius orbit $\OO$.

\subsection{Hasse--Witt triples and Ekedahl--Oort types}\label{dieu}
Recall $\sigma$ denotes the Frobenius of $\fpb$.
Let $A$ be a principally polarized abelian variety of dimension $g$, defined over $\fpb$. Its Ekedahl-Oort type encodes the isomorphism class of $A[p]$, or equivalently the isomorphism class of the associated polarized mod-$p$ Diedonn\'e module $(M,F,V,b)$, where
\begin{itemize}
    \item $M=H^1_{dR}(A/\fpb)$;
    \item $F: M \to M$ is the $\sigma$-linear map on $M$ induced by the Frobenius of $A$;
    \item $b: M \times M \to \fpb$ is the pairing induced by the polarization of $A$;
    \item $V: M \to M$ is the unique $\sigma^{-1}$-linear operator satisfying $b(F(x),y)=b(x,V(y))^{p}$.
\end{itemize}

In \cite{Moonen algorithm}, Moonen establishes a equivalence of category between the polarized mod-$p$ Dieudonné modules and Hasse-Witt triples, where he defines a Hasse-Witt triple $(Q,\phi,\psi)$ as follows:
\begin{itemize}
    \item $Q$ is a finite dimensional vector space over $\fpb$;
    \item $\phi: Q \to Q$ is a $\sigma$-linear map;
    \item $\psi: \ker(\phi) \to \im(\phi)^{\perp}$ is a $\sigma$-linear isomorphism, where $\im(\phi)^{\perp}\subseteq Q^{\smvee} ={\rm Hom}_{\fpb} (Q,\fpb)$ is the subspace $\im(\phi)^{\perp}=\{\lambda \in Q^{\smvee}: \lambda(\phi(q))=0, \forall q \in Q$\}.
\end{itemize}
Under Moonen's equivalence of category,  the polarized mod-$p$ Dieudonné module $(F,M,V,b)$ corresponding to a Hasse-Witt triple $(Q,\phi,\psi)$ is given by: 
\begin{itemize}
    \item $M = Q \oplus Q^{\smvee}$;
    \item $F: M \to M$ is defined as follows: set $R_1=\ker(\phi)$, choose $R_0$ a compliment of $R_1$ in $Q$, and write $M=(R_0\oplus R_1)\oplus Q^{\smvee}$, then $F(x+y,z)=(\phi(x),\psi(y))$, for any $x\in R_0,$ $y\in R_1$, $z\in Q^{\smvee}$;
    \item $b:M\times M\to \fpb$ is defined by $b((q,\lambda),(q',\lambda'))= \lambda'(q)-\lambda(q')$, for any $q,q'\in Q$ and $\lambda,\lambda'\in Q^{\smvee}$.
    \item  $V:M\to M$ is uniquely determined by $b(F(x),y)=b(x,V(y))^p$.
\end{itemize}

\subsubsection{The $(G,\fb)$-ordinary Ekedahl--Oort type}
We recall the definition of the $p$-ordinary Ekedahl--Oort type for $\shgf$. Since it depends on the Shimura datum $(G,\fb)$ we also refer to it as the $(G,\fb)$-ordinary Ekedahl--Oort type at $p$.

Recall the identification $\tg=\Hom(G,\C^*) \cong \Hom (G,\overline{\Q}_p^{\rm un*})$; it induces an isomorphism $\tg\cong \Hom(G,\fpb^*)$.
Let $A$ be an abelian variety  over $\fpb$ corresponding to a point of  $\shgf$, and denote its mod-$p$ Dieudonné module by $(M, F,V,b)$. The action of $G$ on $A$ induces a structure of $\fpb[G]$-module on $M$. Hence, the Dieudonné module $M$ decomposes as 
$$M=\bigoplus_{\tau\in\tg} M_\tau=\bigoplus_{\OO\in\og}M_{\OO},\text{ where } M_{\OO}=\bigoplus_{\tau\in \OO}M_{\tau}, $$
and  for each $\tau \in\tg$,  $M_\tau$ is the $\tau$-isotypic component of $M$. That is, $h \in G$ acts on $M_\tau$ via multiplication by $\tau(h)$. For $\tau\in \tg$, let $g(\tau)=\dim_{\fpb} (M_{\tau})$. Since it depends only on the Frobenius orbit of $\tau$, we write  $g(\OO)=g(\tau)$, for any/all $\tau \in \OO$.

For simplicity, given $\tau\in\tg$, we denote $\tau^{\sigp}$ by $p\tau$ and its orbit $\OO_{\tau}=\{\tau,p\tau,\dots,p^{|\OO_{\tau}|-1}\tau\}$. Since $p(p^{|\OO_\tau|-1}\tau)=\tau$, we also write $p^{|\OO_\tau|-1}\tau$ as $\frac{\tau}{p}$.  
Then, $F$ maps $M_{\tau}$ to $M_{p\tau}$, and $V$ maps $M_{\tau}$ to $M_{\frac{\tau}{p}}$. 

Recall, for $\tau\in\tg$, $\tau^*\in\tg$ is defined as $\tau^*(x)=\tau(x)^{-1}$. Given an orbit $\OO$, we denote its conjugate orbit as ${\OO}^*=\{\tau^*\mid \tau\in \OO\}$. The polarization $b:M \times M \to \fpb$ identifies $M^{\smvee}$ with $M$, $M_\OO^{\smvee}$ with $M_{\OO^*}$, and $M_{\tau}^{\smvee}$ with $M_{\tau^*}$.

\begin{definition}\label{Def_EOMFVb} (\cite[Section 1.2.3]{Moonen EO type formula})
The $(G,f)$-ordinary mod-$p$ Diedonn\'e module $(M,F,V,b)$ is given as follows. 
Let $\{e_{\tau,j}\mid 1\leq j\leq {g(\OO_\tau)}\}$ be a $\fpb$-basis of $M_\tau$, and denote the dual basis on $M_\tau^{\smvee}$ by $\{\check{e}_{\tau,j}\mid 1\leq j\leq {g(\OO_\tau)}\}$. 
Then the polarization $b$ on $M$, in terms of the induced isomorphisms $M_\tau^{\smvee}\simeq M_{\tau^*}$  for $\tau\in\tg$, is given by $\check{e}_{\tau,j}\mapsto e_{\tau^*,g(\OO)+1-j}$, for $1\leq j\leq g(\OO_\tau)$.
The action of $F$ and $V$ on $M$, when restricted to $M_\tau$ for $\tau\in\tg$, are given by
\begin{align}\label{Eq_MFVb}
F(e_{\tau,j})&=\begin{cases}
    e_{p\tau,j} &\text{ if } j \leq \fts\\
    0 &\text{ if } j \geq \fts+1,
    \end{cases}
&
V(e_{p\tau,j_1})&=\begin{cases}
    0 &\text{ if } j_1 \leq f(\tau^*)\\
    e_{\tau,j_1} &\text{ if } j_1 \geq f(\tau^*)+1.
    \end{cases}
\end{align}

\end{definition}

\begin{remark}\label{Rem_EOHassetrople}
Under Moonen's equivalence, the $(G,\fb)$-ordinary Hasse--Witt triple $(Q, \phi,\psi)$ is defined as follows.
Let $Q=\ker(F)^{\smvee}\subseteq M$ and define $Q_{\tau}=Q\cap M_{\tau}$, for each $\tau\in\tg$. Write $Q_{\tau^*}^{\smvee}=(Q_{\tau^*})^{\smvee}$ (in general $Q_{\tau^*}^{\smvee}$ is not $Q^{\smvee}\cap M_{\tau^*}$).
Then $M_{\tau}=Q_{\tau} \oplus Q_{\tau^*}^{\smvee}$,
where the set $\{e_{\tau,i_{\tau,1}},\dots, e_{\tau,i_{\tau,f(\ts)}}\}$ is a basis of $Q_{\tau}$ and $\{e_{\tau,j_{\tau,1}},\dots, e_{\tau,j_{\tau,f(\tau)}}\}$ is a basis of $Q_{\tau^*}^{\smvee}$.
With respect to this choice of bases for $\qt,\qtsd$, for all $\tau\in\tg$, the matrix of $F$ restricted to $M_\tau$, that is $F: M_\tau=Q_{\tau} \oplus Q_{\tau^*}^{\smvee}\to M_{p\tau}=\qpt \oplus \qptsd$ is
\begin{align}\label{Eqn: Ftau V'tau}
F_{\tau}= \begin{bmatrix}
    \phi_\tau & 0\\
    \psi_\tau & 0 \\
\end{bmatrix}
\end{align}
where $\phi_\tau$ (respectively $\psi_\tau$) is the matrix of $\phi$ (respectively $\psi$) restricted to $Q_{\tau}$, that is $\phi_\tau:Q_\tau\to\qpt$ (respectively $\psi_\tau:\qt \to \qptsd$).
\end{remark}

\subsubsection{Elements in the Weyl group}

\newcommand{\EO}{Ekedahl--Oort }

\newcommand{\Sym}{{{\rm Sym}}}
By \cite{viehmann-wedhorn}, the \EO types associated with non-empty strata of $\shgf_{\fpb}$ are in one-to-one correspondence with certain elements in the Weyl group of the reductive group $\mathcal{G}$. We recall this construction.

Consider the set $${\rm Weyl}(G,f)=\prod_{\tau \in \tg}\Sym_{g(\OO)}/W_{f,\tau},$$
where, for each $\tau\in\tg$, $W_{f,\tau}=\Sym\{1, \dots,f(\tau)\} \times \Sym\{f(\tau)+1, \dots, g(\OO)\}.$
Then, the non-empty \EO strata of $\shgf_{\fpb}$ are in one-to-one correspondence with the cosets in ${\rm Weyl}(G,f)$ defined by elements
$w=(\wt\mid \tau\in\tg)\in \prod_{\tau \in \tg} \Sym_{g(\OO_\tau)}$
satisfying $$w_{\tau^*}(j)=g(\OO)+1-w_{\tau} (g(\OO)+1-j).$$ In particular, the open Ekedahl–Oort stratum corresponds to the unique element $w$ such that the permutations $\wt$ have maximum length, for all $\tau\in\tg$.

The \EO type of $A$ is defined in terms of the canonical filtration of $M$, of length $2g=\dim(M)$,  obtained by repeatedly applying $F$ and $V^{-1}$ to $M$. By projecting the filtration to $M_{\tau}$, we obtain a filtration of $M_\tau$, of length $g(\OO_\tau)=\dim(M_{\tau})$,
$0\subsetneq M_{\tau,1} \subsetneq M_{\tau,2} \subsetneq \dots, \subsetneq M_{\tau,g(\OO)}=M_{\tau}.$
To each $\tau$, we associate a permutation $w_{\tau}\in \Sym_{g(\OO)}$ as follows. 
For $1\leq j\leq g(\OO)$, denote $\eta_{\tau,j}=\dim(\ker(F)\cap M_{\tau,j})$. Note that $\eta_{\tau,j}\leq \eta_{\tau,j+1}\leq \eta_{\tau,j}+1$. 
 Recall that by definition, the signature $\fb$ satisfies $f(\tau)=\dim(\ker(F)\cap M_{\tau})$ and $f(\tau)+f(\tau^*)=g(\OO)$.
We deduce that $0\leq \eta_{\tau,j}\leq f(\tau)$. We record the $k^{th}$ position at which the sequence of $\eta_{\tau,j}$ jumps by $j_{\tau,k}$. We obtain 
\begin{align}\label{Def_j}
1\leq j_{\tau,1}<j_{\tau,2}<\dots<j_{\tau,f(\tau)}\leq g(\OO)\end{align}
satisfying $\eta_{\tau,j_{\tau,k}}=\eta_{\tau,j_{\tau,k}-1}+1$. We denote by $i_{\tau,1}<\dots<i_{\tau,f(\tau^*)}$ the remaining indices, they satisfy  $\eta_{\tau,i_{\tau,k}}=\eta_{\tau,i_{\tau,k}-1}$. 

\begin{definition}\label{Def_EOword}
The \EO type of $A$ is the coset in ${\rm Weyl}(G,f)$ of the element $w=(w_\tau\mid \tau\in\tg)$ where  $w_{\tau} \in \Sym_{g(\OO_\tau)}$is given by
$$w_{\tau}(j_{\tau,k})=k \text{ and } w_{\tau}(i_{\tau,k})=f(\tau)+k.$$
\end{definition}
By definition, $w_\tau\in \Sym_{g(\OO_\tau)}$ satisfies the property 
\begin{equation}\label{wordproperty}
 \text{ if } j'< j \text{ and } \wt(j') > \wt(j) \text{ then }\wt(j) \leq f(\tau) < \wt(j')
\end{equation}
Furthermore, $\wt$ is the unique element in its coset in $S_{g(\OO)}/W_{f,\tau}$ that satisfies (\ref{wordproperty}).

\begin{lemma}\cite[Section 2.3.4]{Moonen dimension formula}\label{length}
The permutation $w_{\tau}\in \Sym_{g(\OO)}$ has length
$$\sum_{k=1}^{f(\tau)}j_{\tau,k}-w_{\tau}(j_{\tau,k})=\sum_{k=1}^{f(\tau^*)} w_{\tau}(i_{\tau,k})-i_{\tau,k}.$$
Moreover, this quantity is maximized if and only if $\ker(F)\cap M_{\tau,f(\tau^*)}=\{0\}$.
\end{lemma}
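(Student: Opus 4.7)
The plan is to compute the length $\ell(w_\tau)$ directly as the number of inversions of $w_\tau$, exploiting the very restrictive shape of $w_\tau$ given in Definition \ref{Def_EOword}. By construction, $w_\tau$ sends the increasing sequence $j_{\tau,1}<\dots<j_{\tau,f(\tau)}$ to $1,2,\dots,f(\tau)$ in order, and the increasing sequence $i_{\tau,1}<\dots<i_{\tau,f(\tau^*)}$ to $f(\tau)+1,\dots,g(\OO)$ in order. Consequently, $w_\tau$ has no inversion between two $j$-indices, nor between two $i$-indices; every inversion arises from a pair $(i_{\tau,k'},j_{\tau,k})$ with $i_{\tau,k'}<j_{\tau,k}$, since then $w_\tau(i_{\tau,k'})=f(\tau)+k'>k=w_\tau(j_{\tau,k})$. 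So $\ell(w_\tau)$ equals the number of such pairs.

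For the first equality, I would count these inversions in two ways. Fixing $k$, the number of $k'$ with $i_{\tau,k'}<j_{\tau,k}$ is the number of $i$-indices in $\{1,\dots,j_{\tau,k}-1\}$, which equals $j_{\tau,k}-k$ since exactly $k$ of the first $j_{\tau,k}$ indices are $j$-indices. Summing gives $\ell(w_\tau)=\sum_{k=1}^{f(\tau)}(j_{\tau,k}-k)=\sum_{k=1}^{f(\tau)}(j_{\tau,k}-w_\tau(j_{\tau,k}))$. Dually, fixing $k'$, the number of $k$ with $j_{\tau,k}>i_{\tau,k'}$ equals $(g(\OO)-i_{\tau,k'})-(f(\tau^*)-k')=f(\tau)+k'-i_{\tau,k'}=w_\tau(i_{\tau,k'})-i_{\tau,k'}$. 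Summing produces the second expression. These two counts of the same set of inversions yield the claimed identity.

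For the characterization of maximality, I would use the expression $\ell(w_\tau)=\sum_{k=1}^{f(\tau)}(j_{\tau,k}-k)$. Under the constraints $j_{\tau,1}<\dots<j_{\tau,f(\tau)}\leq g(\OO)$, each summand is maximized simultaneously exactly when $j_{\tau,k}=f(\tau^*)+k$ for every $k$, i.e.\ the jump positions are the last $f(\tau)$ indices $\{f(\tau^*)+1,\dots,g(\OO)\}$, so the maximum value is $f(\tau)f(\tau^*)$. By the definition of the $j_{\tau,k}$ as the jump positions of the nondecreasing sequence $\eta_{\tau,j}=\dim(\ker(F)\cap M_{\tau,j})$ (which grows in steps of at most one from $0$ to $f(\tau)$), the condition $j_{\tau,1}=f(\tau^*)+1$ is equivalent to $\eta_{\tau,j}=0$ for all $j\leq f(\tau^*)$, and in particular to $\ker(F)\cap M_{\tau,f(\tau^*)}=\{0\}$. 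Conversely, this vanishing forces no jumps before position $f(\tau^*)+1$, and since all $f(\tau)$ jumps must then occur in the remaining $f(\tau)$ positions, this pins down $j_{\tau,k}=f(\tau^*)+k$ and the equivalent maximality.

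This is essentially a bookkeeping argument, so I do not anticipate a genuine obstacle; the only care needed is to check the count $j_{\tau,k}-k$ (and the dual count) against the defining properties of the $j_{\tau,k}$, $i_{\tau,k}$, and the constraint $f(\tau)+f(\tau^*)=g(\OO)$, all of which are available from Section \ref{dieu}.
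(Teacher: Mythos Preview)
The paper does not prove this lemma itself; it is stated with a citation to \cite[Section 2.3.4]{Moonen dimension formula} and no proof is given. Your argument is a correct and self-contained verification: the inversion count is right (the structure of $w_\tau$ from Definition~\ref{Def_EOword} indeed forces all inversions to be of mixed $(i,j)$-type, and your two counts are accurate), and the maximality analysis correctly ties $j_{\tau,1}=f(\tau^*)+1$ to the vanishing of $\eta_{\tau,f(\tau^*)}=\dim(\ker(F)\cap M_{\tau,f(\tau^*)})$. Nothing further is needed.
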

\begin{remark}\label{rmk}
The condition $\ker(F)\cap M_{\tau,f(\tau^*)}=\{0\}$ is equivalent to the equalities $j_{\tau,k}=\fts+k$ for $1\leq k\leq f(\tau)$,  and  $i_{\tau,k}=k$ for $1\leq k\leq \fts$. We deduce that $w_{\tau}$ has maximal length if and only if $w_{\tau^*}$ has maximal length, if and only if  
\[\wt(k)=\begin{cases} f(\tau)+k \text{ for } k\leq \fts,\\   k-\fts \text{ for } k>\fts.\end{cases} \]
\end{remark}

\section{Reduction from abelian cover to cyclic covers}\label{reduction argument}

In this section, we reduce the proof of Theorem \ref{abelian cover} to the case when $G$ is a cyclic group. More precisely, we show that an abelian $G$-cover of $\Po$ is $(G,\fb)$-ordinary if its cyclic quotients are.

Recall the identification $\tg= \Hom(G,\C^*) =\Hom(G,\overline{\Q}^*)\simeq \Hom(G,\fpb^*)$, and consider the action of $\galq$ on $\tg$ by composition on the left.
For any subgroup $H \leq G$, we denote $$\tg^H= \{ \tau \in \tg | H \subseteq \ker(\tau)\}\text{ and }\tg^{H,\text{new}}=\{\tau \in \tg | H=\ker(\tau)\}.$$ For $H=\{1\}$, we also write $\tgun=\tg^{\{1\},\text{new}}$. %=\{\tau \in \tg| \ker(\tau)=\{1\}\}$. 
Consider the partition 
$$\tg=\bigcup_{\substack{H \leq G,\\ G/H \text{cyclic}}}\tguh=\coprod_{\substack{H \leq G,\\ G/H \text{cyclic}}}\tghn$$
Then the action of $\galq$ on $\tg$ preserves the partition, and for each $H$, with $G/H$ cyclic,  $\galq$ acts transitively on $\tghn$.

Recall the decomposition of $\Q[G]$ into the simple factors,
$\Q[G] \cong \prod K_H,$ where $H$ varies among the subgroups $H$ for which $G/H$ is cyclic.
We denote the induced decomposition of $J(C)$ up to isogeny as
$$J(C) \sim \bigoplus_{\substack{H \leq G,\\ G/H \text{ cyclic } }} J(C)_{H}.$$
\begin{definition}\label{definition of new part}
When $G$ a cyclic group, we call $J(C)^{\n}=J(C)_{\{1\}}$ the new part of the Jacobian.
\end{definition}

By construction, for any subgroup $H$ of $G$, with $G/H$ cyclic, after identifying $\mathcal{T}_{G/H}\simeq \tg^H$, we have
$$J(C/H)\simeq \bigoplus_{H\leq H'} J(C)_{H'}.$$
where the signature of the $G/H$-cover $C/H$ is $\fb_{G/H}=\fb_{\vert \tg^H}$ and the signature of the new part $J(C/H)^{\n}$ of $J(C/H)$ is $\fb_{G/H}^{\n}=\fb_{\vert T^{H,\n}_G}$.

From the formula computing  $\mu$-ordinary polygons, and the decomposition $\mu_p(G,\fb)=\bigoplus_{\OO\in\OO_G} \mu(\OO)$ arising from (\ref{simplefactors}), we deduce 
$$\mu_p(G,\fb)= \bigoplus_{\substack{H \leq G,\\ G/H \text{ cyclic } }} \mu_p(K_H,\fb_{G/H}^{\n}) \text{ where }
\mu_p(K_H,\fb_{G/H}^{\n})= \bigoplus_{\substack{\OO \in \OO_{G}\\ \OO\subseteq \mathcal{T}_{G/H}^{\n}}}\mu(\OO)$$
and 
$$\mu_p(G/H, \fb_{G/H})=\bigoplus_{\substack{\OO \in \OO_{G/H}}} \mu(\OO) \text{ where } \OO_{G/H}\simeq \{\OO\in\OO_G\mid \OO\subseteq \mathcal{T}_G^H\}.$$ 

In the following, we refer to the Newton polygon $\mu_p(K_H,\fb_{G/H}^{\n})$ as the $(K_H,\fb_{G/H}^{\n})$-ordinary polygon at $p$. By definition, it is the $\mu$-ordinary polygon at $p$ of a PEL type Shimura variety parametrizing abelian varieties with an action of the field $K_H$, and signature $\fb_{G/H}^{\n}$.

We deduce the following statement.

\begin{lemma}\label{equivalence two}
Let $G$ be an abelian group, and $p$ a prime $p\nmid |G|$. For $C \to \Po$ a $G$-cover of $\Po$ defined over $\fpb$,
the following are equivalent:
\begin{enumerate}
    \item $J(C)$ is $(G,\fb)$-ordinary;
    \item $J(C/H)$ is $(G/H,\fb_{G/H})$-ordinary, for all $H \leq G$ with $G/H$ cyclic; 
    \item $J(C/H)^{\n}$ is $(K_H, \fb_{G/H}^{\n})$-ordinary, for all $H \leq G$ with $G/H$ cyclic.
\end{enumerate}
\end{lemma}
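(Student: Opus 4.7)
The plan is to derive the equivalence from the orbit-by-orbit criterion for $\mu$-ordinariness recalled in the excerpt, together with the identification of the new part of each quotient Jacobian with the corresponding isotypic summand of $J(C)$. Writing $D=D(J(C))$, the criterion asserts that $\text{NP}(D)=\mu_p(G,\fb)$ if and only if $\text{NP}(D_\OO)=\mu(\OO)$ for every Frobenius orbit $\OO\in\og$. Because the Frobenius action on $\tg$ preserves kernels of characters, the partition $\tg=\coprod_H \tghn$ (indexed by $H\leq G$ with $G/H$ cyclic) refines $\tg=\coprod_\OO \OO$; regrouping orbits by $H$ yields, for each such $H$, an isocrystal $D_H=\bigoplus_{\OO\subseteq \tghn}D_\OO$ with Newton polygon $\bigoplus_{\OO\subseteq \tghn}\text{NP}(D_\OO)$ and $\mu$-ordinary bound $\mu_p(K_H,\fb_{G/H}^{\n})=\bigoplus_{\OO\subseteq \tghn}\mu(\OO)$, matching the decomposition of $\mu_p(G,\fb)$ recalled in the excerpt.

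For $(1)\Leftrightarrow(3)$, I would identify $J(C)_H\simeq J(C/H)^{\n}$: under $\mathcal{T}_{G/H}\simeq \tg^H$, characters of $G/H$ with trivial kernel correspond to characters of $G$ with kernel exactly $H$, so the new part of $J(C/H)$ is precisely the $H$-isotypic summand of $J(C)$, with Dieudonn\'e module $D_H$. Hence $(3)$ reads ``$\text{NP}(D_H)=\mu_p(K_H,\fb_{G/H}^{\n})$ for every such $H$'', which by the orbit criterion (applied within each $\tghn$) is equivalent to ``$\text{NP}(D_\OO)=\mu(\OO)$ for every $\OO\in\og$'', and hence, by the orbit criterion applied to the full $D$, to $(1)$.

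For $(2)\Leftrightarrow(3)$, I would apply the just-proven $(1)\Leftrightarrow(3)$ to the abelian $(G/H)$-cover $C/H\to\Po$: that case states that $J(C/H)$ is $(G/H,\fb_{G/H})$-ordinary if and only if $J(C/H')^{\n}$ is $(K_{H'},\fb_{G/H'}^{\n})$-ordinary for every intermediate $H\leq H'\leq G$ with $G/H'$ cyclic. Then $(2)$ and $(3)$ match as quantified statements: $(3)\Rightarrow(2)$ by restricting to $H'\geq H$, and $(2)\Rightarrow(3)$ by taking $H=H'$. The only delicate point throughout is the bookkeeping needed to identify $J(C)_H\simeq J(C/H)^{\n}$ and to match indices across the decompositions of $\mu_p(G,\fb)$, $\mu_p(G/H,\fb_{G/H})$, and the isotypic summands of $J(C)$; these reduce to the compatibility between the partition $\tg=\coprod_H \tghn$ and the simple-factor decomposition \eqref{simplefactors} of $\Q_p[G]$, with no substantive new input beyond the orbit criterion and the preservation of kernels under Frobenius.
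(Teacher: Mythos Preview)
Your proposal is correct and follows essentially the same approach as the paper: the paper does not give a separate proof of the lemma but presents it as a direct consequence of the preceding discussion, namely the orbit-by-orbit criterion $\text{NP}(D)=\mu_p(G,\fb)\Leftrightarrow \text{NP}(D_\OO)=\mu(\OO)$ for all $\OO$, the identification $J(C/H)\simeq\bigoplus_{H\leq H'}J(C)_{H'}$, and the matching decompositions of $\mu_p(G,\fb)$ and $\mu_p(G/H,\fb_{G/H})$ over Frobenius orbits. Your write-up makes explicit exactly the bookkeeping that the paper leaves implicit.
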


Since $\mu$-ordinariness is an open condition, Lemma \ref{equivalence two} reduces the proof of Theorem \ref{abelian cover} to the case of $G$ a cyclic group. 
Furthermore, by the remark below, we may assume that $G$ is cyclic of size $l\geq 3$.

\begin{remark}\label{R_assl3}
 Let $C$ is a cyclic cover of $\Po$ of degree $2$, branched at $r$ points. Assume $r\leq 5$.  Since the number of branched points of a cover of degree $2$ is even, we deduce that $r\leq 4$. If $r=2$, the genus of $C$ is $0$.
 If $r=4$, the genus of $C$ is $1$, and $C$ is generically ordinary.
\end{remark}

\begin{example}\label{Moonen Oort}
We cite an example from \cite{Moonen Oort} to illustrate how to compute the quotient covers and their signatures.
Consider the abelian monodromy datum $(\Z/2\Z \times \Z/6\Z, 4, (1,0),(1,1),(0,2),(0,3))$. Then $\tg$ has size 12; for $0\leq i\leq 1$ and $0\leq j\leq 5$, we denote by
 $\kij\in\tg$ the character given by  $\chi_{i,j}(a_1,a_2)=\zeta_6^{3a_1i+a_2j}$, for
 $\zeta_6=e^{\pi i/3}\in \C$ is a primitive sixth root of unity.
Then $\tg$ is  partitioned into the following $8$ Galois orbits:
\begin{equation*}
(\chi_{0,0}),(\chi_{0,1},\chi_{0,5}),
(\chi_{0,2},\chi_{0,4}),(\chi_{0,3}),
(\chi_{1,0}),(\chi_{1,1},\chi_{1,5}),
(\chi_{1,2},\chi_{1,4}),(\chi_{1,3}), 
\end{equation*}
Denote $C_{\kij}=C/\ker(\kij)$. The $8$ Galois orbits lead to $8$ quotient curves, we illustrate one of them.
    Consider the Galois orbit $(\chi_{1,2},\chi_{1,4})$. Let $\rho=\chi_{1,2}$, $H=\ker(\rho)=\{(0,0),(0,3)\}$. The monodromy datum of $C_{\rho}$ is $(6,3,(3,5,4))$. This is because $|\rho(G)|=6$ and $\rho(\au_i)=\zeta_6^{\ub_i}$ where $\ub=(3,5,4,0)$. Thus, $C_\rho$ is the normalization of $y^6=(x-x_1)^3(x-x_2)^5(x-x_3)^4$.
    The characters whose kernel contains $\ker(\rho)$ are $\{\chi_{0,0},\chi_{1,2},\chi_{0,4},\chi_{1,0},\chi_{0,2},\chi_{1,4}\}$, and they arise by inflation from the characters of $G/H$. We can compute their signature via \autoref{signature}, for $1\leq i\leq 5$
    $$f_G(\chi_{i,2i})=f_{G/H}(i^{-1}(\chi_{i,2i}))=f_{G/H}(\rho^i)=\langle\frac{-3i}{6}\rangle+\langle\frac{-5i}{6}\rangle+\langle\frac{-4i}{6}\rangle-1.$$
    Hence, the signature $\fb_G$ on $\{\chi_{0,0},\chi_{1,2},\chi_{0,4},\chi_{1,0},\chi_{0,2},\chi_{1,4}\}$ takes values $\{0, 0,0,0,0,1\}$. We deduce that $J(C_\rho)$ is an elliptic curve, with complex multiplication by $K_6=\Q(\zeta_6)$.
\end{example}

\section{A criterion of $p$-ordinariness for extended Hasse-Witt matrices}\label{combi}
Let $(G,r,\au)$ be an abelian monodromy datum as in Definition \ref{monodromy datum}, and consider the associated Shimura datum $(G,\fb)$ as defined in Section \ref{Shimura}. Assume $p$ is a prime not dividing $|G|$. In this section, we give an explicit numerical criterion (Theorem \ref{numerical criteria}) for the mod-$p$ Dieudonn\'e module of abelian covers of $\Po$ with monodromy $(G,r,\au)$ to be $(G,\fb)$-ordinary. We deduce the statement from \cite[Theorem 1.3.7]{Moonen EO type formula}, which when specialized to our context states that a mod-$p$ Diedonn\'e module is $p$-ordinary if, for each $\tau\in\tg$, the associated word $\wt$ has maximal length. Our criterion is stated as finitely many rank conditions on iterations of  the extended Hasse-Witt matrix.

In the following,  $\OO\subseteq \mathcal{T}_G$ is a Frobenius orbit, and we assume $\tau\in\OO$.

With the notations from Section \ref{dieu}, given an \EO type $w=(w_\tau\mid \tau\in \mathcal{T}_G)$ as in Definition \ref{Def_EOword},  we describe the associated mod-$p$ Diedonn\'e module $(M,F,V,b)$ with a $(G,\fb)$-structure (that is, an action of $G$ of signature $\fb$). 
Let ${e_{\tau,1},\dots, e_{\tau,g(\OO)}}$ be a basis of $M_{\tau}$, and $f(\tau)=\dim(\ker(F)\cap M_{\tau})$.
For suitable
increasing sequences $1\leq j_{\tau,1}<j_{\tau,2}<\dots<j_{\tau,f(\tau)}\leq g(\OO)$ and $1\leq i_{\tau,1}<\dots<i_{\tau,f(\tau^*)}\leq g(\OO)$ satisfying 
$\{j_{\tau,1},\dots,j_{\tau,f(\tau)}\}\cup\{,i_{\tau,1},\dots,i_{\tau,f(\tau^*)}\}=\{1,\dots,g(\OO)\}$, the word $w_{\tau}$ associated to $(M,F,V,b)$ is given by $$w_{\tau}(j_{\tau,k})=k \text{ and }w_{\tau}(i_{\tau,k})=f(\tau)+k.$$ 
From \autoref{Eq_MFVb}, the action of $F$ and $V$ on $M_{\tau}$ is determined by  $w=(\wt)_{\tau \in \tg}$ as 
\begin{align}\label{Eq_F}
F(e_{\tau,j})&=\begin{cases}
    e_{p\tau,w_{\tau}(j)-f(\tau)} &\text{ if } w_{\tau}(j) \geq f(\tau)+1\\
    0 &\text{ if } w_{\tau}(j) \leq f(\tau)
    \end{cases},
&
V(e_{p\tau,j_1})&=\begin{cases}
    0 &\text{ if } j_1 \leq f(\tau^*)\\
    e_{\tau,w_{\tau}^{-1}(j_1-f(\tau^*))} &\text{ if } j_1 \geq f(\tau^*)+1.
    \end{cases}
\end{align}

\begin{lemma}\label{Lem: 2 technical lemma w tau}
Given $\tau\in\OO$ and $1\leq k\leq g(\OO)$, there exists $k' \in \mathbb{N}$ such that $1\leq k'\leq g(\OO)$ and
$$\{w_{\tau}(j)-f(\tau)\mid 1\leq j\leq k, w_{\tau}(j)> f(\tau)\} =\{1,2,\dots,k'\}.$$
Further, we have $k'\leq \min(k,f(\tau^*))$.
Moreover, if $w_{\tau}$ is maximal then we have $k'=\min(k,f(\tau^*))$.
\end{lemma}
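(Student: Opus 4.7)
The plan is to unravel the definitions carefully and read off the answer from the combinatorial structure of $w_\tau$. Recall from Definition \ref{Def_EOword} and the discussion preceding it that $\{1,\dots,g(\OO)\}$ is partitioned into the two increasing sequences $j_{\tau,1}<\dots<j_{\tau,f(\tau)}$ and $i_{\tau,1}<\dots<i_{\tau,f(\tau^*)}$, on which $w_\tau$ acts by $w_\tau(j_{\tau,k})=k$ and $w_\tau(i_{\tau,s})=f(\tau)+s$. Thus an index $j\in\{1,\dots,k\}$ satisfies $w_\tau(j)>f(\tau)$ if and only if $j=i_{\tau,s}$ for some $s$ with $i_{\tau,s}\leq k$, and then $w_\tau(j)-f(\tau)=s$.

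Consequently, the set in the statement rewrites as $\{s\mid i_{\tau,s}\leq k\}$. Since $i_{\tau,1}<i_{\tau,2}<\dots<i_{\tau,f(\tau^*)}$ is strictly increasing, this is an initial segment of $\{1,\dots,f(\tau^*)\}$; I would define $k'$ to be the largest $s$ with $i_{\tau,s}\leq k$ (interpreting $k'=0$, so the segment is empty, if no such $s$ exists). By construction the set equals $\{1,2,\dots,k'\}$.

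Next I would verify the bound $k'\leq\min(k,f(\tau^*))$. The inequality $k'\leq f(\tau^*)$ is immediate since $s$ ranges in $\{1,\dots,f(\tau^*)\}$. For $k'\leq k$, I would note that since the $i_{\tau,s}$ are distinct positive integers in strictly increasing order, a trivial induction gives $i_{\tau,s}\geq s$ for every $s$; applied to $s=k'$, this yields $k'\leq i_{\tau,k'}\leq k$.

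Finally, for the maximality claim, I would invoke Remark \ref{rmk}: if $w_\tau$ has maximal length then $i_{\tau,s}=s$ for all $1\leq s\leq f(\tau^*)$. In that case $i_{\tau,s}\leq k$ is equivalent to $s\leq k$ (together with the constraint $s\leq f(\tau^*)$), so the largest admissible $s$ is exactly $\min(k,f(\tau^*))$, giving $k'=\min(k,f(\tau^*))$. I do not anticipate a genuine obstacle here; the lemma is purely bookkeeping about the permutation $w_\tau$, and the only subtlety is the convention for $k'$ when the displayed set is empty, which I would flag explicitly.
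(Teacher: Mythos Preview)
Your proposal is correct and follows essentially the same approach as the paper's proof: both identify $k'$ as the largest index $s$ with $i_{\tau,s}\leq k$ (with the convention $k'=0$ if the set is empty), use the elementary bound $s\leq i_{\tau,s}$ to get $k'\leq k$, and invoke $i_{\tau,s}=s$ in the maximal case. Your explicit flagging of the $k'=0$ edge case is a nice touch that the paper handles the same way without comment.
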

\begin{proof}

Firstly, note that $\{j\mid w_{\tau}(j)> f(\tau)\}=\{i_{\tau,1},\dots,i_{\tau,f(\tau^*)}\}$.
Let $k'=\max\{a \mid 1\leq a\leq f(\tau^*),  i_{\tau,a}\leq k\}$. Set $k'=0$ if $k<i_{\tau,1}$. We see that $\{j\mid 1\leq j\leq k, w_{\tau}(j)> f(\tau)\}=\{i_{\tau,1},\dots,i_{\tau,k'}\}$.
Therefore,
\[\{w_{\tau}(j)-f(\tau)\mid 1\leq j\leq k, w_{\tau}(j)> f(\tau)\} =\{1,2,\dots,k'\}.\]
It is clear that $k'\leq f(\tau^*)$.
Further, since $a\leq i_{\tau,a}$, we have $k'\leq i_{\tau,k'}\leq k$. This shows that $k'\leq \min(k,f(\tau^*))$.
If $w_{\tau}$ is maximal, then we have $i_{\tau,a}=a$. Therefore, if $f(\tau^*)\leq k$, then we have $k'=f(\tau^*)$. On the other hand, if $f(\tau^*)\geq k$, then we have $k'=k$.
We see that in both cases we have $k'=\min(k,f(\tau^*))$.
\end{proof}
We deduce the following lemma. Recall, for $1\leq k\leq g(\OO)$,
$M_{\tau,k}$ denotes the subspace of $M_\tau$ spanned by $e_{\tau,1},\dots, e_{\tau,k}$.
\begin{lemma}\label{F}
Given $\tau\in\OO$ and $1\leq k\leq g(\OO)$, let $l=\dim(F(M_{\tau,k}))$. Then $F(M_{\tau,k})=M_{p\tau,l}$ and $l\leq \min(k,f(\tau^*))$. Moreover, if $w_{\tau}$ is maximal, then $l= \min(k,f(\tau^*))$.
\end{lemma}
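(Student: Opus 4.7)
The plan is to read off $F(M_{\tau,k})$ directly from the explicit formulas for $F$ and then apply the preceding lemma. Since $M_{\tau,k} = \Span(e_{\tau,1}, \dots, e_{\tau,k})$, linearity gives
\[
F(M_{\tau,k}) = \Span\bigl( F(e_{\tau,1}), \dots, F(e_{\tau,k}) \bigr).
\]
Using the formula in \autoref{Eq_F}, the basis vector $e_{\tau,j}$ is sent either to $0$ (when $w_\tau(j)\leq f(\tau)$) or to $e_{p\tau, w_\tau(j)-f(\tau)}$ (when $w_\tau(j) > f(\tau)$). Hence
\[
F(M_{\tau,k}) = \Span\bigl\{ e_{p\tau, w_\tau(j)-f(\tau)} \,\big|\, 1\leq j\leq k,\ w_\tau(j) > f(\tau) \bigr\}.
\]

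The first step is then to invoke \autoref{Lem: 2 technical lemma w tau}, which identifies the index set appearing above as exactly $\{1,2,\dots,k'\}$ for some integer $k'$ with $0\leq k' \leq \min(k, f(\tau^*))$. Substituting, we immediately obtain
\[
F(M_{\tau,k}) = \Span(e_{p\tau,1}, \dots, e_{p\tau,k'}) = M_{p\tau, k'},
\]
so that $l = \dim F(M_{\tau,k}) = k'$, which simultaneously proves $F(M_{\tau,k}) = M_{p\tau,l}$ and the bound $l\leq \min(k, f(\tau^*))$.

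For the maximality clause, the second part of \autoref{Lem: 2 technical lemma w tau} states that if $w_\tau$ has maximal length, then $k' = \min(k, f(\tau^*))$, giving $l = \min(k, f(\tau^*))$. I do not anticipate any obstacle here: the proof is essentially a one-line translation once \autoref{Lem: 2 technical lemma w tau} is applied, with no hidden case analysis beyond keeping track of which basis vectors are annihilated by $F$.
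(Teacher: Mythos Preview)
Your proof is correct and follows essentially the same approach as the paper: compute the span of the $F(e_{\tau,j})$ using \autoref{Eq_F}, then invoke \autoref{Lem: 2 technical lemma w tau} to identify the index set as $\{1,\dots,k'\}$ and conclude. The paper adds the remark that the resulting spanning set of $e_{p\tau,\cdot}$'s is linearly independent (hence $l=k'$), which you use implicitly.
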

\begin{proof}
We know that $F(M_{\tau,k})$ is spanned by $\{F(e_{\tau,1}),\dots,F(e_{\tau,k})\}$. Therefore, it is spanned by
$$\{e_{p\tau,w_{\tau}(j)-f(\tau)}\mid 1\leq j\leq k, w_{\tau}(j)\geq f(\tau)+1\}.$$
This set is also linearly independent and hence forms a basis of $F(M_{\tau,k})$. Consider the $k'$ given by Lemma \ref{Lem: 2 technical lemma w tau}, we see that $F(M_{\tau,k})=M_{p\tau,k'}$. Since $l=\dim(F(M_{\tau,k}))$, we see that $l=k'$. Therefore, Lemma \ref{Lem: 2 technical lemma w tau} tells us that $l\leq \min(k,f(\tau^*))$ and if $w_{\tau}$ is maximal then $l=\min(k,f(\tau^*))$.
\end{proof}

 From Remark \ref{Rem_EOHassetrople}, recall $M_{\tau}=Q_{\tau}\oplus Q_{\ts}^{\smvee}$, where $Q_\tau$ is spanned by $\{e_{\tau,i_{\tau,1}},\dots, e_{\tau,i_{\tau,f(\ts)}}\}$ and $Q_{\tau^*}^{\smvee}$ by 
$\{e_{\tau,j_{\tau,1}},\dots, e_{\tau,j_{\tau,f(\tau)}}\}$; $\pi_{\tau}:M_{\tau}\to Q_{\tau}$ denotes the canonical projection, with kernel $Q_{\tau^*}^{\smvee}$.

For any $\tau\in \OO$, $1\leq j\leq g(\OO)$, and Frobenius orbit $\OO$, we define $V': M\to M$ by
\begin{align}\label{Eq_V'}
 V'(e_{\tau,j})= F(\check{e}_{\tau,j})^{\smvee}, 
\end{align}

\begin{lemma}\label{FV'}
Let $F:M\to M$ as in \autoref{Eq_F} and $V': M\to M$ as in \autoref{Eq_V'}.
Then   
\begin{enumerate}[leftmargin=*]
    \item $\ker(V')=Q$;
    \item $\rm{Im}(F) \cap \rm{Im}(V')=\{0\}$;
    \item if $W\subseteq M_{\tau}$ is  spanned by a subset of $\{e_{\tau,1}, \dots e_{\tau, g(\OO)}\}$, then $(F+V')(W)=F(W)\oplus V'(W)$.
\end{enumerate}
\end{lemma}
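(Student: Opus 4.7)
The plan is to first derive an explicit closed form for $V'(e_{\tau,j})$ in terms of the basis of $M_{p\tau}$; once this is in hand, all three claims reduce to direct comparisons with the formula for $F$ in \autoref{Eq_F}. Unfolding the definition $V'(e_{\tau,j}) = F(\check{e}_{\tau,j})^{\smvee}$ proceeds in three steps: use the polarization identification $\check{e}_{\tau,j} \leftrightarrow e_{\tau^*,\, g(\OO)+1-j}$ to regard the dual basis element as an element of $M_{\tau^*}$, apply the formula \autoref{Eq_F} for $F\colon M_{\tau^*}\to M_{p\tau^*}$, and finally dualize back into $M_{p\tau}$ via the same identification. The key algebraic input is the compatibility identity $w_{\tau^*}(j) = g(\OO)+1 - w_\tau(g(\OO)+1-j)$, which converts the $w_{\tau^*}$-data appearing in $F(\check{e}_{\tau,j})$ into $w_\tau$-data. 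A short computation yields
\begin{equation*}
V'(e_{\tau,j}) \;=\; \begin{cases} e_{p\tau,\, f(\tau^*)+w_\tau(j)} & \text{if } w_\tau(j) \leq f(\tau), \\ 0 & \text{if } w_\tau(j) > f(\tau). \end{cases}
\end{equation*}

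For part (1), the vanishing condition $w_\tau(j) > f(\tau)$ characterizes exactly the indices $j \in \{i_{\tau,1},\dots,i_{\tau,f(\tau^*)}\}$, i.e.\ the basis vectors of $Q_\tau$; summing over $\tau\in\tg$ yields $\ker(V') = Q$. For part (2), the formula for $F$ shows $\text{Im}(F|_{M_\tau}) = \Span\{e_{p\tau,k} : 1 \leq k \leq f(\tau^*)\}$, while the closed form for $V'$ shows $\text{Im}(V'|_{M_\tau}) = \Span\{e_{p\tau,k} : f(\tau^*)+1 \leq k \leq g(\OO)\}$. These subspaces are complementary inside $M_{p\tau}$, so they intersect trivially; since $\text{Im}(F)$ and $\text{Im}(V')$ each decompose along the target Frobenius orbits, the global statement $\text{Im}(F)\cap \text{Im}(V')=\{0\}$ follows.

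For part (3), write $W = \Span\{e_{\tau,j}\}_{j\in S}$ and split $S = S_1 \sqcup S_2$ with $S_1 = S\cap\{i_{\tau,k}\}$ and $S_2 = S\cap\{j_{\tau,k}\}$; set $W_a = \Span\{e_{\tau,j}\}_{j\in S_a}$, so that $W = W_1 \oplus W_2$. On $W_1$ the map $V'$ vanishes while $F$ is injective; on $W_2$ the roles are reversed, with $V'$ injective. Injectivity in both cases follows from $w_\tau$ being a permutation, so that the indexed target basis vectors are distinct. Consequently $(F+V')(W) = F(W_1) + V'(W_2) = F(W) + V'(W)$, and directness of this sum is exactly part (2). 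The main obstacle is the bookkeeping in the first step: one must cleanly traverse the polarization isomorphism twice, correctly interpret the symbol $F(\check{e}_{\tau,j})^{\smvee}$ as an element of $M_{p\tau}$, and invoke the compatibility between $w_\tau$ and $w_{\tau^*}$ without index errors; once the closed form for $V'$ is secured, parts (1)--(3) all reduce to reading off standard facts about the permutation $w_\tau$.
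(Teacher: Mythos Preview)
Your proof is correct. The approach differs from the paper's in that you first derive the explicit closed form for $V'(e_{\tau,j})$ and then read off all three claims from it, whereas the paper proceeds more abstractly: for part~(1) it argues directly from the definition that $V'(x)=0 \iff F(\check{x})=0 \iff \check{x}\in\check{Q} \iff x\in Q$, and for part~(2) it observes that $\operatorname{Im}(V')\subseteq \operatorname{Im}(F)^{\smvee}$ and $\operatorname{Im}(F)\cap\operatorname{Im}(F)^{\smvee}=\{0\}$ by construction of the Hasse--Witt triple. For part~(3) the two arguments essentially coincide, since the paper also uses that on each basis vector $e_{\tau,j}$ exactly one of $F,V'$ is nonzero. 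Your explicit formula requires the compatibility $w_{\tau^*}(j)=g(\OO)+1-w_\tau(g(\OO)+1-j)$ and a careful double application of the polarization isomorphism, which is more bookkeeping up front; in exchange, parts~(1) and~(2) become immediate inspections of index ranges, and you obtain the stronger statement that $\operatorname{Im}(F|_{M_\tau})$ and $\operatorname{Im}(V'|_{M_\tau})$ are complementary coordinate subspaces of $M_{p\tau}$, not merely disjoint. The paper's argument avoids this computation but leans on the reader to unpack the identifications $Q=\ker(F)^{\smvee}$ and $\operatorname{Im}(F)^{\smvee}$ inside $M$.
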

\begin{proof}
\hfill
\begin{enumerate}[leftmargin=*]
    \item For $x \in M$, we have $V'(x)=0$ if and only if $F(\check{x})=0$. This is equivalent to $\check{x}\in \check{Q}$, which happens if and only if $x\in Q$. Therefore, $\ker(V')=Q$.
    \item By definition, $\im(V')\subseteq \im(F)^{\smvee}$ and $\im(F)\cap \im(F)^{\smvee}=\{0\}$.
    \item From \autoref{Eq_F} and \autoref{Eq_V'} combined,
\begin{equation*}
    (F+V')(e_{\tau,j})=\begin{cases}
     F(e_{\tau,j})  &\text{if }  \wt(j)>f(\tau)\\
     V'(e_{\tau,j})  &\text{if } \wt(j)\leq f(\tau).
    \end{cases}
\end{equation*}
This implies that $(F+V')(W)=F(W)+V'(W)$. Hence, the statement follows from part 2.
\end{enumerate}
\end{proof}

For $\tau\in\OO$ and $1\leq j_2\leq j_3\leq g(\OO)$, we denote $M_{\tau,j_2,j_3}=\Span\{e_{\tau,j_2}, \dots, e_{\tau,j_3}\}$. 

\iffalse
\begin{lemma}
 Also, given any $x\in \check{Q}$, we have $V(V'(x))=x$. Finally, given a subspace $W\subseteq M_{\tau}$ which is spanned by some vectors of the standard basis $\{e_{\tau,j}\}$, we have $V^{-1}(W)\cap M_{p\tau}=F(M_{\tau})\oplus V'(W)$.
\end{lemma}
\begin{proof}
Now consider $x \in \Check{Q}$, we want to show that $V(V'(x))=x$. Notice that $\check{Q}$ is spanned by $\check{e}_{\tau,j}$ such that $w_{\tau}(j) \geq f(\tau)+1$. Moreover, for such $\check{e}_{\tau,j}$, we have
\begin{equation*}
    \begin{split}
    V(V'(\check{e}_{\tau,j})) &=V(F(e_{\tau,j})^{\smvee})
    =V(\check{e}_{p\tau,w_{\tau}(j)-f(\tau)})
    =\check{e}_{\tau,j}.
    \end{split}
\end{equation*}
This shows that for any $x\in \check{Q}$, we have $V(V'(x))=x$.

Next, consider $W \subseteq M_\tau$. Firstly, since $W$ has a basis consisting of $e_{\tau,j}$ we see that $W=(W\cap Q_{\tau})\oplus(W\cap Q_{\tau^*}^{\smvee})$. Let $W_1=W\cap Q_{\tau}$ and $W_2=W\cap Q_{\tau^*}^{\smvee}$. We see that
$$V(F(M_{\tau})+V'(W))=V(V'(W))=V(V'(W_2))=W_2\subseteq W.$$
This means that $F(M_{\tau})+V'(W)\subseteq V^{-1}(W)\cap M_{p\tau}$. Next consider $y\in V^{-1}(W)\cap M_{p\tau}$. So $V(y)\in W\cap \im(V)=W_2$, and hence $V(V'(V(y)))=V(y)$. This means that $V'(V(y))-y\in \ker(V)=\im(F)$. Therefore, $y= (y-V'(V(y)))+V'(V(y))\in F(M_{\tau})+V'(W)$. We have shown that $V^{-1}(W)\cap M_{p\tau}=F(M_{\tau})+ V'(W)$.
Finally, since $\im(F) \cap \im(F)^{\smvee}= \{0\}$, the sum is a direct sum. 
\end{proof}
\fi

\begin{lemma}\label{V'stream} 
Let $F:M\to M$ as in \autoref{Eq_F} and $V':M\to M$ as in \autoref{Eq_V'}. Let $\tau\in \OO$. Assume $\wt$ is maximal. Then,
\begin{enumerate}[leftmargin=*]
    \item for $\fts \leq j_1<j_2\leq j_3$: $V'(M_{\tau,j_1})=M_{p\tau,\fts+1,j_1}$, and $V'(M_{\tau,j_2,j_3})=M_{p\tau,j_2,j_3}$. 
\item for  $1\leq j\leq g(\OO)$: $(F+V')(M_{\tau,j})=M_{p\tau,j}$.
\end{enumerate}
\end{lemma}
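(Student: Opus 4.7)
The plan is to exploit the fact that when $w_\tau$ is maximal, both $F$ and $V'$ act on the basis $\{e_{\tau,k}\mid 1\leq k\leq g(\OO)\}$ of $M_\tau$ by either sending $e_{\tau,k}$ to $e_{p\tau,k}$ or to zero, with the two cases partitioning $\{1,\dots,g(\OO)\}$ at $k=\fts$. Once these explicit formulas are in hand, both parts of the lemma reduce to reading off a span of basis vectors.

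First, I would compute the action of $F$ on $M_\tau$. Remark \ref{rmk} translates maximality of $\wt$ into the formulas $\wt(k)=f(\tau)+k$ for $k\leq\fts$ and $\wt(k)=k-\fts$ for $k>\fts$. Substituting into \autoref{Eq_F} gives
\[
F(e_{\tau,k})=
\begin{cases} e_{p\tau,k} & \text{if } k\leq \fts,\\ 0 & \text{if } k>\fts.\end{cases}
\]

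Next, I would derive the analogous formula for $V'$. Since Remark \ref{rmk} also ensures $w_{\tau^*}$ is maximal, the same computation applies to $F$ acting on $M_{\tau^*}$. Unwinding the definition $V'(e_{\tau,j})=F(\check e_{\tau,j})^{\smvee}$ from \autoref{Eq_V'} through the two polarization identifications $M_\tau^{\smvee}\cong M_{\tau^*}$ and $M_{p\tau^*}^{\smvee}\cong M_{p\tau}$ of Definition \ref{Def_EOMFVb}, and then plugging in the $F$-formula on $M_{\tau^*}$, should yield
\[
V'(e_{\tau,k})=
\begin{cases} 0 & \text{if } k\leq \fts,\\ e_{p\tau,k} & \text{if } k>\fts.\end{cases}
\]
The vanishing half serves as a sanity check against Lemma \ref{FV'}(1): for maximal $\wt$, Remark \ref{rmk} forces $\qt=M_{\tau,\fts}$, and $\ker(V')\cap M_\tau=\qt$.

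With both formulas available, the two conclusions follow immediately. For part (1), since $j_1\geq \fts$ and $j_2>\fts$, $V'$ kills $e_{\tau,1},\dots,e_{\tau,\fts}$ and sends every remaining generator of $M_{\tau,j_1}$ and of $M_{\tau,j_2,j_3}$ diagonally to its counterpart in $M_{p\tau}$, giving $V'(M_{\tau,j_1})=M_{p\tau,\fts+1,j_1}$ and $V'(M_{\tau,j_2,j_3})=M_{p\tau,j_2,j_3}$. For part (2), the two formulas show $(F+V')(e_{\tau,k})=e_{p\tau,k}$ for every $1\leq k\leq g(\OO)$, hence $(F+V')(M_{\tau,j})=M_{p\tau,j}$. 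The only mildly delicate point is the bookkeeping in the $V'$-computation, where the polarization must be tracked carefully through both the input dualization $\check e_{\tau,j}\leftrightarrow e_{\tau^*,g(\OO)+1-j}$ and the output dualization $M_{p\tau^*}^{\smvee}\cong M_{p\tau}$; once this is done, the rest is mechanical.
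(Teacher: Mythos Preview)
Your proposal is correct and follows essentially the same approach as the paper: both compute the explicit action of $F$ and $V'$ on the basis vectors $e_{\tau,k}$ under the maximality assumption, obtaining $F(e_{\tau,k})=e_{p\tau,k}$ for $k\leq\fts$ and $V'(e_{\tau,k})=e_{p\tau,k}$ for $k>\fts$ (with zero otherwise), and then read off the spans. Your derivation of the $V'$-formula via the double dualization is more detailed than the paper's, which simply asserts the formula, but the argument is the same.
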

\begin{proof}
By Lemma \ref{length}, since the word $w_{\tau}$ is maximal, we have $i_{\tau,t}=t$ for $1\leq t\leq \fts$ and $j_{\tau,t}=f(\tau^*)+t$ for $1\leq t\leq f(\tau)$.
Now, for $1\leq t\leq f(\ts)$, we have $F(e_{\tau,t})=e_{p\tau,t}$ and $V'(e_{\tau,t})=0$. Whereas for $\fts<t\leq j$, we have $F(e_{\tau,t})=0$ and $V'(e_{\tau,t})=e_{p\tau,t}$.
The result follows.
\end{proof}

Now we fix an orbit $\OO$. Let $l(\OO) \colonequals |\OO|$ denote the length of the orbit. When $\OO$ is fixed, we write $l$ for $l(\OO)$.
\begin{proposition}\label{criteriasecond}
For $\tau \in \OO, 0\leq i\leq l-1$, we define $H_{\tau,i}: M_{p^i\tau} \to M_{p^{i+1}\tau}$ as
\begin{equation}\label{Eq_H}
    H_{\tau,i}(x)=\begin{cases}
    F(x) &\text{if } f(p^i\tau^*)\geq f(\tau^*)\\
    F(x)+V'(x) &\text{if } f(p^i\tau^*)< f(\tau^*).
    \end{cases}
\end{equation}
Suppose
\begin{enumerate}[leftmargin=*]
    \item \label{assumption 1}$\dim(\pi_{\tau}\circ H_{\tau,l-1}\circ\dots H_{\tau,1}\circ H_{\tau,0}(M_{\tau}))=f(\tau^*)$;
    \item \label{assumption 2} for any $\tau'\in\OO$ satisfying $f(\tau'^*)<f(\tau^*)$, we have $w_{\tau'}$ maximal.
\end{enumerate}
Then  
\begin{enumerate}[leftmargin=*]
   \item \label{part 1} for any $0\leq i\leq l-1$:  $H_{\tau,i}\circ \dots H_{\tau,1}\circ H_{\tau,0}(M_{\tau})=M_{p^{i+1}\tau,f(\tau^*)}$;
    \item \label{part 2}$w_{\tau}$ is maximal;
\item\label{part 3} $w_{\tau'}$ is  maximal for any $\tau'\in\OO$ satisfying $f(\tau'^*)=f(\tau^*)$.

\end{enumerate}
\end{proposition}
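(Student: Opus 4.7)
The plan is to prove the three conclusions in sequence: first (\ref{part 1}) by induction on $i$; then (\ref{part 2}) by a projection argument using (\ref{assumption 1}); and finally (\ref{part 3}) by reading off the effect of $F$ on an already-computed subspace. Write $N_k := H_{\tau, k-1} \circ \dots \circ H_{\tau, 0}(M_{\tau})$ with $N_0 := M_\tau$. The inductive invariant will be that $N_k$ is a ``staircase'' subspace of the form $M_{p^k\tau, d_k}$ for some integer $d_k$, with base case $d_0 = g(\OO)$. For the step I split on \autoref{Eq_H}: if $f(p^k\tau^*) \geq f(\tau^*)$ then $H_{\tau,k} = F$ and Lemma \ref{F} gives $N_{k+1} = M_{p^{k+1}\tau, d_{k+1}}$ with $d_{k+1} \leq \min(d_k, f(p^k\tau^*))$; if $f(p^k\tau^*) < f(\tau^*)$ then $H_{\tau,k} = F + V'$, hypothesis (\ref{assumption 2}) ensures $w_{p^k\tau}$ is maximal, and part 2 of Lemma \ref{V'stream} yields $N_{k+1} = M_{p^{k+1}\tau, d_k}$. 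The first case applied at $k=0$ gives $d_1 \leq f(\tau^*)$, and an easy secondary induction then shows that $(d_k)_{k \geq 1}$ is non-increasing and bounded above by $f(\tau^*)$.

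Hypothesis (\ref{assumption 1}) closes the inequality from the other side: since $N_l = M_{\tau, d_l}$, the bound $f(\tau^*) = \dim \pi_\tau(M_{\tau, d_l}) \leq \min(d_l, f(\tau^*))$ forces $d_l \geq f(\tau^*)$, so $d_l = f(\tau^*)$, and together with the non-increasing property we get $d_k = f(\tau^*)$ for all $1 \leq k \leq l$, proving (\ref{part 1}). For (\ref{part 2}), since $Q_\tau$ is spanned by the basis vectors $\{e_{\tau, i_{\tau, a}}\}$, the projection $\pi_\tau(M_{\tau, f(\tau^*)})$ is spanned by those $e_{\tau, i_{\tau, a}}$ with $i_{\tau, a} \leq f(\tau^*)$ and has dimension $|\{a : i_{\tau, a} \leq f(\tau^*)\}|$. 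Setting this equal to $f(\tau^*) = f(\ts)$ and invoking $i_{\tau, a} \geq a$ forces $i_{\tau, a} = a$ for $1 \leq a \leq f(\tau^*)$, which by Remark \ref{rmk} is exactly the maximality of $w_\tau$.

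For (\ref{part 3}), take $\tau' = p^i\tau$ with $1 \leq i \leq l - 1$ and $f(\tau'^*) = f(\tau^*)$ (the case $i = 0$ is (\ref{part 2})). Since $f(p^i\tau^*) = f(\tau^*) \geq f(\tau^*)$, the map $H_{\tau, i}$ equals $F$, and (\ref{part 1}) gives the computed equality $F(M_{p^i\tau, f(\tau^*)}) = M_{p^{i+1}\tau, f(\tau^*)}$. Unpacking Lemma \ref{F} via Lemma \ref{Lem: 2 technical lemma w tau}, the dimension $f(\tau^*)$ of $F(M_{p^i\tau, f(\tau^*)})$ equals the largest $a$ with $i_{p^i\tau, a} \leq f(\tau^*)$; since $a$ ranges over $1, \dots, f(p^i\tau^*) = f(\tau^*)$ and $i_{p^i\tau, a} \geq a$, this forces $i_{p^i\tau, a} = a$ for $1 \leq a \leq f(p^i\tau^*)$, and Remark \ref{rmk} yields $w_{\tau'}$ maximal. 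The main technical obstacle will be the bookkeeping in (\ref{part 1}) --- in particular, confirming the staircase invariant $N_k = M_{p^k\tau, d_k}$ is preserved through both branches of \autoref{Eq_H} (the second requiring (\ref{assumption 2}) to apply part 2 of Lemma \ref{V'stream}) and that the monotonicity $d_{k+1} \leq d_k$ propagates uniformly across the full cycle; once this is in hand, (\ref{part 2}) and (\ref{part 3}) reduce to the described dimension counts via Lemma \ref{Lem: 2 technical lemma w tau}.
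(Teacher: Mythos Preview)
Your proposal is correct and follows the same overall skeleton as the paper (induction for (\ref{part 1}), then (\ref{part 2}) via a dimension/projection argument, then (\ref{part 3})), but it differs in two places worth noting.

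For (\ref{part 1}), the paper invokes assumption (\ref{assumption 1}) \emph{inside} the inductive step: in the case $f(p^i\tau^*)\geq f(\tau^*)$ it sets $d=\dim F(M_{p^i\tau,f(\tau^*)})$, observes $d\leq f(\tau^*)$, and then immediately squeezes $d=f(\tau^*)$ by bounding $f(\tau^*)=\dim(\pi_\tau\circ H_{\tau,l-1}\circ\dots\circ H_{\tau,i+1}(M_{p^{i+1}\tau,d}))\leq d$. You instead only record $d_{k+1}\leq d_k$ at each step, carry the weaker invariant $d_k\leq f(\tau^*)$ through the whole cycle, and use assumption (\ref{assumption 1}) just once at the end to force $d_l=f(\tau^*)$ and hence (by monotonicity) $d_k=f(\tau^*)$ for all $k\geq 1$. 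Both are fine; yours is a clean ``defer the squeeze'' variant.

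For (\ref{part 3}), the paper takes a longer route: it shows that the analogue of assumption (\ref{assumption 1}) holds for $\tau'=p^i\tau$ by identifying $H_{\tau',j}$ with a cyclic shift of the $H_{\tau,\bullet}$'s and rewriting $H_{\tau',l-1}\circ\dots\circ H_{\tau',0}(M_{\tau'})$ in terms of the already-computed images for $\tau$, then invokes (\ref{part 2}). Your argument is shorter: from (\ref{part 1}) you already know $F(M_{p^i\tau,f(\tau^*)})=M_{p^{i+1}\tau,f(\tau^*)}$ (since $H_{\tau,i}=F$ when $f(p^i\tau^*)=f(\tau^*)$), and then via the proof of Lemma~\ref{Lem: 2 technical lemma w tau} the equality $k'=f(\tau^*)$ forces $i_{p^i\tau,a}=a$ for all $1\leq a\leq f(p^i\tau^*)$, which is exactly maximality of $w_{\tau'}$ by Remark~\ref{rmk}. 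This bypasses the cyclic-shift bookkeeping entirely and is a genuine simplification.
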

\begin{proof}
     We prove \autoref{part 1}  by induction on $i$. First, consider the base case $i=0$. We have $H_{\tau,0}=F$. Since $\dim(F(M_{\tau}))=\fts$, by Lemma \ref{F}, we have $F(M_{\tau})=M_{p\tau,\fts}$. Next, suppose that for some $i\geq 1$, we have $H_{\tau,i-1}\circ \dots H_{\tau,1}\circ H_{\tau,0}(M_{\tau})=M_{p^{i}\tau,f(\tau^*)}$. We distinguish two cases.
     
     Suppose $f(p^{i}\ts)\geq \fts$. Thus $H_{\tau,i}=F$. Let $d=\dim(F(M_{p^{i}\tau,f(\tau^*)}))$. Then $d \leq \dim(M_{p^{i}\tau,f(\tau^*)})=\fts$. By Lemma \ref{F}, $F(M_{p^{i}\tau,f(\tau^*)})=M_{p^{i+1}\tau,d}$.  We deduce that  $d=\fts$ from the inequality 
    \begin{equation*}
        \begin{split}
         \fts&=\dim(\pi_{\tau}\circ H_{\tau,l-1}\circ\dots H_{\tau,1}\circ H_{\tau,0}(M_{\tau}))\\
         &=\dim(\pi_{\tau}\circ H_{\tau,l-1}\circ\dots \circ H_{\tau,i+1}(M_{p^{i+1}\tau,l}))
         \leq \dim(M_{p^{i+1}\tau,d})=d.
        \end{split}
    \end{equation*}
   
   Suppose $f(p^{i}\ts)< \fts$. Then $H_{\tau,i}=F+V'$ and $w_{p^i\tau}$ is maximal. By Lemma \ref{V'stream}, we have
    $$H_{\tau,i}\circ \dots H_{\tau,1}\circ H_{\tau,0}(M_{\tau})=(F+V')(M_{p^{i}\tau,f(\tau^*)})=M_{p^{i+1}\tau,\fts}.$$
This completes the induction step and hence the proof of \autoref{part 1}.

We prove \autoref{part 2}. By \autoref{part 1}, $H_{\tau,l-1}\circ \dots H_{\tau,1}\circ H_{\tau,0}(M_{\tau})=M_{\tau,f(\tau^*)}$. Hence by assumption (\ref{assumption 1}),  $\dim(\pi_{\tau}(M_{\tau,\fts}))=\fts$. We deduce that $M_{\tau,\fts}\cap \ker(\pi_{\tau})=\{0\}$; that is, $M_{\tau,\fts}\cap \ker(F)=\{0\}$. By Lemma \ref{length}, $w_{\tau}$ is  maximal.

We prove \autoref{part 3}. Suppose $\tau'\in \OO$ satisfies $f(\tau')=\fts$. By \autoref{part 2}, it suffices to 
 prove that \[\dim(\pi_{\tau'} \circ H_{\tau',l-1} \circ \dots \circ H_{\tau',0}(M_{\tau'}))=f(\tau^*).\]
Write $\tau'=p^i\tau$, for some $1\leq i\leq l-1$.
Then $H_{\tp,j}=H_{\tau,j+i}$ for  $0 \leq j \leq l-i-1$, and $H_{\tp,j}=H_{\tau, j+i-l}$ for  $l-i \leq j \leq l-1$.
Since $H_{\tp,0}=F$, we deduce $$H_{\tp,0}(M_{\tp})=F(M_{\tp})=M_{p\tp,\fts}=H_{\tau,i}\circ \dots H_{\tau,1}\circ H_{\tau,0}(M_{\tau}).$$
This implies that
$$H_{\tau',l-i-1}\circ \dots H_{\tau',1}\circ H_{\tau',0}(M_{\tau'}) =H_{\tau,l-1}\circ \dots H_{\tau,i+1}\circ H_{\tau,i}\circ \dots H_{\tau,1}\circ H_{\tau,0}(M_{\tau})=M_{\tau,\fts}.$$
Since $\wt$ is maximal, Lemma \ref{F} implies $F(M_{\tau,\fts})=M_{p\tau,\fts}=F(M_{\tau})$, that is, $H_{\tau,0}(M_{\tau,\fts})=H_{\tau,0}(M_{\tau})$. We deduce
\begin{align*}
  H_{\tau',l-1}\circ \dots H_{\tau',1}\circ H_{\tau',0}(M_{\tau'}) &=H_{\tau',l-1}\circ \dots\circ H_{\tau',l-i}(M_{\tau,\fts})\\
&=H_{\tau,i-1}\circ \dots\circ H_{\tau,0}(M_{\tau,\fts})
    = H_{\tau,i-1}\circ \dots\circ H_{\tau,0}(M_{\tau}).
\end{align*}
Since $\ker(\pi_{\tau'})=Q_{\tau'^*}^{\smvee}=\ker(F_{\tau'})$, all subspaces $W\subseteq M_{\tau'}$ satisfy $\dim(\pi_{\tau'}(W))=\dim(F(W))$. Hence,
\begin{align*}
    \dim(\pi_{\tau'} \circ H_{\tau',l-1} \circ \dots \circ H_{\tau',0}(M_{\tau'}))
    &= \dim(F(H_{\tau,i-1}\circ \dots \circ H_{\tau,0}(M_{\tau})))\\
    &=\dim(H_{\tau,i}\circ H_{\tau,i-1}\circ \dots\circ H_{\tau,0}(M_{\tau}))\\
    &=\dim(M_{p^{i+1}\tau,\fts})=\fts. \qedhere
\end{align*}
\end{proof}

Let $f_{1}<f_{2}<\dots< f_{s(\OO)}$ denote the distinct values in $\FF(\OO)=\{\fts\mid \tau\in \OO\}$. For each $1\leq u\leq s(\OO)$, we choose $\tau_{u}\in\OO$ satisfying $f(\tau_{u}^*)=f_{u}$.
From Proposition \ref{criteriasecond}, we  deduce the main result of this section.

\begin{theorem}\label{numerical criteria}
  Fix an orbit $\OO$ and let $l=|\OO|$. For $\tau\in \OO$ and $0\leq i\leq l-1$,  denote $H_{\tau,i}: M_{p^i\tau} \to M_{p^{i+1}\tau}$ as in \autoref{Eq_H}. Let $1\leq u\leq s(\OO)$, and assume that for each $1\leq j\leq u$: we have $\dim(\pi_{\tau_{j}}\circ H_{\tau_{j},l-1} \circ \dots \circ H_{\tau_{j},0}(M_{\tau_{j}}))=f_{j}.$
Then  $w_{\tau}$ is maximal for all $\tau\in \OO$ satisfying $f(\tau^*)\leq f_{u}$. 
\end{theorem}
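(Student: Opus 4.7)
The plan is to deduce Theorem \ref{numerical criteria} from Proposition \ref{criteriasecond} by a straightforward induction on the index $u$. The proposition already does the hard work: assuming maximality of $w_{\tau'}$ for every $\tau' \in \OO$ with $f(\tau'^*)$ strictly smaller than $f(\tau^*)$, and the rank condition on $\pi_\tau \circ H_{\tau,l-1} \circ \dots \circ H_{\tau,0}(M_\tau)$, it concludes that $w_\tau$ is maximal (part \ref{part 2}) and moreover $w_{\tau'}$ is maximal for every $\tau' \in \OO$ with $f(\tau'^*) = f(\tau^*)$ (part \ref{part 3}). So the theorem will follow by feeding the proposition one value of $f_u$ at a time.

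For the base case $u = 1$, I would apply Proposition \ref{criteriasecond} to $\tau = \tau_1$. Hypothesis \eqref{assumption 1} of the proposition holds by the $j = 1$ case of our assumption. Hypothesis \eqref{assumption 2} is vacuous: since $f_1 = \min \FF(\OO)$, there is no $\tau' \in \OO$ with $f(\tau'^*) < f_1$. Parts \ref{part 2} and \ref{part 3} of the proposition then give that $w_\tau$ is maximal for every $\tau \in \OO$ with $f(\tau^*) = f_1$, which is the desired conclusion at this stage.

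For the inductive step, suppose $2 \leq u \leq s(\OO)$ and that $w_\tau$ is already known to be maximal for all $\tau \in \OO$ with $f(\tau^*) \leq f_{u-1}$. Apply Proposition \ref{criteriasecond} to $\tau = \tau_u$. Hypothesis \eqref{assumption 1} of the proposition is the $j = u$ case of our assumption. For hypothesis \eqref{assumption 2}, observe that since $f_1 < f_2 < \dots < f_{s(\OO)}$ are the distinct values of $f(\tau'^*)$ on $\OO$, any $\tau' \in \OO$ with $f(\tau'^*) < f(\tau_u^*) = f_u$ satisfies $f(\tau'^*) \leq f_{u-1}$, and so $w_{\tau'}$ is maximal by the induction hypothesis. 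Parts \ref{part 2} and \ref{part 3} of the proposition then give that $w_\tau$ is maximal for every $\tau \in \OO$ with $f(\tau^*) = f_u$, completing the induction.

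I do not anticipate any serious obstacle: the proof is a clean packaging of Proposition \ref{criteriasecond}, where the only verification is that the hypothesis on orbits with smaller signature is supplied by the preceding step of the induction. The real content lies in Proposition \ref{criteriasecond}, whose proof is already in place.
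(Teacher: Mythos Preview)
Your proposal is correct and is precisely the argument the paper has in mind: the paper simply states that Theorem \ref{numerical criteria} is deduced from Proposition \ref{criteriasecond} without spelling out the induction, and your write-up fills in exactly that straightforward induction on $u$.
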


\section{The Hasse-Witt triple of cyclic covers of $\Po$}\label{duality}

The goal of this section is to explicitly compute the Hasse-Witt triple of a cyclic cover of $\Po$.

In \cite{Moonen algorithm}, Moonen gives an explicit algorithm for computing the Hasse--Witt triple of a complete intersection curve defined over $\fpb$. 
By adapting \cite[Proposition 3.11 and Formula (3.11.3)]{Moonen algorithm} 
to the special case of a cover of $\Po$, we obtain the following description of its Hasse-Witt triple.

\begin{proposition}\label{P_Hasse Witt triple} (Special case of \cite[Proposition 3.11]{Moonen algorithm})
Let $\pi: C \to \Po$ be a smooth projective branched cover of the projective line.  The Hasse-Witt triple  of $C$ is $(Q,\phi,\psi)$ where
\begin{enumerate}[leftmargin=*]
    \item $Q=\hy$, and $Q^{\smvee}=\hyd$;
    \item $\phi:\hy \to \hy$ is given by the Hasse-Witt matrix;
    \item $\psi: \ker(\phi) \to \im(\phi)^\perp$ is defined as $\psi(\alpha)=(df_{1,{\alpha}},-df_{2,{\alpha}}),$
    where $(df_{1,{\alpha}},-df_{2,{\alpha}})$ denotes the global $1$-form on $C$ which restricts to $df_{1,{\alpha}}$ on $U_1$ and to $-df_{2,{\alpha}}$ on $U_2$, for $f_{1,{\alpha}} \in \OO_C(U_1)$ and $f_{2,\alpha} \in \OO_C(U_2)$ satisfying $\alpha^p=f_{1,{\alpha}}+f_{2,{\alpha}}$. 
\end{enumerate}

\end{proposition}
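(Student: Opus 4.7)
The plan is to derive the description of the Hasse--Witt triple by specializing Moonen's general algorithm (\cite[Proposition 3.11]{Moonen algorithm}) to the two-chart Čech setting afforded by any branched cover of $\Po$. Fix affine opens $V_1, V_2 \subseteq \Po$ with $V_1 \cup V_2 = \Po$, and set $U_i = \pi^{-1}(V_i)$. Because $\pi$ is an affine morphism, each $U_i$ is affine, so $\{U_1, U_2\}$ is a Leray cover of $C$ for $\OO_C$, and
\[
H^1(C, \OO_C) = \OO_C(U_1 \cap U_2) \big/ \bigl(\OO_C(U_1) + \OO_C(U_2)\bigr).
\]
Items (1) and (2) of the statement then follow directly from Moonen's result combined with Serre duality $Q^{\smvee} = H^0(C, \Omega_C)$: the vector space $Q$ is $H^1(C, \OO_C)$, and $\phi$ is the $\sigma$-linear operator induced on it by the absolute Frobenius of $\OO_C$, which on Čech representatives is the classical Hasse--Witt map $g \mapsto g^p$.

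For item (3), given $\alpha \in \ker(\phi)$ represented by a Čech 1-cocycle $g \in \OO_C(U_1 \cap U_2)$, the vanishing of $\phi(\alpha) = [g^p]$ means one can write $g^p = f_{1,\alpha} + f_{2,\alpha}$ with $f_{i,\alpha} \in \OO_C(U_i)$. Since the characteristic is $p$, one has $d(g^p) = p g^{p-1} dg = 0$, hence $df_{1,\alpha} = -df_{2,\alpha}$ on $U_1 \cap U_2$, and the pair $(df_{1,\alpha}, -df_{2,\alpha})$ glues to a global regular 1-form on $C$. This form is well defined because the ambiguity in the decomposition of $g^p$ lies in $H^0(C, \OO_C)$, which is killed by $d$, and it is independent of the representative $g$ because changing $g$ by an element of $\OO_C(U_1) + \OO_C(U_2)$ only modifies $g^p$ by a function of the same type, so modifies the $f_{i,\alpha}$ by local sections whose differentials already live in the image on each chart.

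The main obstacle is to verify that this concrete recipe agrees with the general formula (3.11.3) of \cite{Moonen algorithm} and that the resulting 1-form lies in the subspace $\im(\phi)^{\perp} \subseteq H^0(C, \Omega_C)$. The first is a matter of unwinding Moonen's local Frobenius lifts in the special case of a two-chart affine Leray cover and recognizing that his local antiderivatives are precisely our $f_{i,\alpha}$. The second is then automatic from Moonen's theorem, which guarantees that $\psi$ is a $\sigma$-linear isomorphism onto $\im(\phi)^{\perp}$; alternatively, it can be checked directly via the Serre-duality pairing, using that the cup product of a Frobenius image class $[h^p]$ with $(df_{1,\alpha}, -df_{2,\alpha})$ is computable through the residue calculus on $U_1 \cap U_2$ and vanishes because $h^p df_{i,\alpha} = d(h^p f_{i,\alpha})$ in characteristic $p$.
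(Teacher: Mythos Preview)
Your proposal is correct and follows essentially the same approach as the paper: both reduce to Moonen's Proposition 3.11 via a two-chart \v{C}ech cover obtained by pulling back an affine cover of $\Po$, and both identify $\psi(\alpha)$ as the global $1$-form glued from $df_{1,\alpha}$ and $-df_{2,\alpha}$ once $\alpha^p$ has been split as $f_{1,\alpha}+f_{2,\alpha}$. The paper's proof is terser (it fixes the specific charts $U_1=\pi^{-1}(\Po\setminus\{\infty\})$, $U_2=\pi^{-1}(\Po\setminus\{0\})$ and simply observes that $df_{1,\alpha}$ and $-df_{2,\alpha}$ agree on the overlap), whereas you spell out the well-definedness checks and the verification that the image lands in $\im(\phi)^\perp$; but the substance is the same.
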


\begin{proof}
   The statement follows by adapting \cite[proof of Proposition 3.11 and Formula (3.11.3)]{Moonen algorithm} to our context, and using the Cech complex with the coordinate charts $U_1=\pi^{-1}(\Po-\{\infty\})$ and $U_2=\pi^{-1}(\Po-\{0\})$.
   Note that if  $\alpha \in \ker(\phi)$, then there exists $f_{1,\alpha}\in \OO_C(U_1)$ and $f_{2,\alpha}\in \OO_C(U_2)$ satisfying $\alpha^p=f_{1,{\alpha}}+f_{2,{\alpha}}$. By construction, $df_{1,{\alpha}}$ and $-df_{2,{\alpha}}$ agree on $U_1\cap U_2$.
\end{proof}

In the following, for $C$ a cyclic cover of $\Po$, we compute the extended Hasse--Witt matrix of $C$, that is, the matrices of the maps $\phi,\psi$ with respect to an explicit choice of bases of $Q$ and $Q^{\smvee}$, for $(Q,\phi,\psi)$ the Hasse--Witt triple of $C$.

\begin{notation}\label{new part mu ord}
Let $G$ be a cyclic group of size $m$. We fix an identification $G=\Z/m \Z$ and we write a monodromy datum for $G$ as $(m,r,\au)$, where $r\geq 3$ and $\au=(a_1,\dots a_r) \in \Z^r$ satisfies
\begin{enumerate}[leftmargin=*]
\item $1 \leq a_k \leq m-1$, for all $1\leq k\leq r$;
\item $\sum_{k=1}^r a_k \equiv 0 \pmod{m}$; 
\item $\gcd(a_1, \dots, a_r,m)=1$. 
\end{enumerate}
Let $\zeta_m=e^{2\pi i/m}\in\C^*$ is a primitive root of $m$. 
We also identify $\tg=\Z/m\Z=\{b\in \Z\mid 0\leq b\leq m-1\}$ by $\tau_b(a)=\zeta_{m}^{ba}.$ Under this identification,  $\tau_b\in \tg^{\n}$ if and only if $\gcd(b,m)=1$.

Let $p$ be a rational prime, $p\nmid m$. 
Given a cyclic monodromy datum $(m, r,\au)$, we consider $C$ the smooth projective curve over $k=\fpb$, which is the normalization of the affine equation $$C':y^m=(x-x_1)^{a(1)} \cdots (x-x_r)^{a(r)},$$ where we assume $x_1, \dots , x_r\in k$ distinct.

In the following, we sometime write the affine equation of $C'$ as $y^m=f(x)$ or $f(x,y)=0$. 
\end{notation}

Let $C/k$ as in Notation \ref{new part mu ord}. 
We denote by $\langle\, ,\, \rangle$  the duality between $\hy$ and $\hyd$, that is
$$\langle f,\omega\rangle=\sum_{P \in C} \Res_P f\omega, \text{ for }  f \in \hy, \omega \in \hyd.$$

For $1 \leq i \leq m-1$, set 
$$v_i=\frac{y^i}{(x-x_1)^{\fy} \dots (x-x_r)^{\fr}}.$$
Note that $v_i$ only depends on the congruence class of $i$ mod $m$.

In the following, we compute $\hy$ via C{e}ch cohomology, with the affine charts $U=\pi^{-1}(\Po-\{\infty\})$ and $V=\pi^{-1}(\Po-\{0\})$. That is, we identify $$\hy = \frac{\OO_C(U \cap V)}{\ocu \oplus \ocv}.$$

\begin{lemma}\label{L_bases}
Let $C/k$ as in Notation \ref{new part mu ord}. Denote by $(Q,\phi,\psi)$ the Hasse--Witt triple of $C$. That is, let $Q=\hy$ and identify $Q^{\smvee}=\hyd$. 
For $1\leq i\leq m-1$, denote by $Q_{\tau_i}$ the $\tau_i$-isotypic component of $Q$. Write $Q^{\smvee}_{\tau_i}=(Q_{\tau_i})^{\smvee}$ (hence, $Q^{\smvee}_{\tau_i}=(Q^{\smvee})_{\tau^*_i}$).

For any $1\leq i\leq m-1$,
\begin{enumerate}[leftmargin=*]
\item the set $\{ \zij=\frac{v_i}{x^j}\mid 1\leq j\leq f(\tau^*_i) \}$ is a $k$-basis basis of $Q_{\tau_i}$ as a $k$-vector space;

\item the set $\{\omega_{i,j}=\frac{x^{j-1}}{v_i}dx \mid 1 \leq j \leq f({\tau_i}^*)\}$ is a basis of $Q^{\smvee}_{\tau_i}$ as a $k$-vector space.
\end{enumerate}
Furthermore, for any $1\leq i,i'\leq m-1, 1\leq j\leq f(\tau^*_i)$ and $ 1 \leq j' \leq f({\tau_{i'}}^*)$, we have
\begin{equation*}
    \langle\zij,\omega_{i',j'}\rangle = 
    \begin{cases}
    l & \text{if $i=i',j=j'$},  \\
    0 & \text{otherwise}.
    \end{cases}
\end{equation*}

\end{lemma}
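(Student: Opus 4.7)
The plan is to work in the Čech complex of $\OO_C$ with respect to the cover $\{U, V\}$, decompose each term into $G$-isotypic components, and evaluate the residue pairing directly. Since $G = \Z/m\Z$ acts via $y \mapsto \zeta_m y$, the $\tau_i$-isotypic component of any $G$-stable submodule of the function field of $C$ is the $k(x)$-span of $y^i$, so the analysis reduces to valuation computations. A direct check shows that $v_i$ is regular at every preimage of $x_k$ (its valuation there equals $m\langle ia_k/m\rangle/\gcd(a_k,m) \geq 0$) and has a pole of order $\alpha_i \colonequals \sum_k\langle ia_k/m\rangle$ at each preimage of $\infty$, independent of the chosen preimage. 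From this I expect to read off
\[ \OO_C(U)_{\tau_i} = k[x]\, v_i, \quad \OO_C(V)_{\tau_i} = x^{-\alpha_i} k[x^{-1}]\, v_i, \quad \OO_C(U \cap V)_{\tau_i} = k[x, x^{-1}]\, v_i, \]
so that $Q_{\tau_i}$ is spanned by the images of $v_i/x^j$ for $1 \leq j \leq \alpha_i - 1$. Using \eqref{signature} and $\tau_i^* = \tau_{m-i}$, we get $\alpha_i - 1 = f(\tau_i^*)$, yielding (1).

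For (2), I would check that $\omega_{i,j}$ has non-negative valuation at every closed point of $C$ for $1 \leq j \leq f(\tau_i^*)$. The computation at preimages of $x_k$ and of $0$ is immediate; at a preimage of $\infty$, the constraint $\alpha_i - j - 1 \geq 0$ is equivalent to $j \leq f(\tau_i^*)$. The weight character follows from $y \mapsto \zeta_m y$ sending $\omega_{i,j}$ to $\zeta_m^{-i}\omega_{i,j}$, placing it in the $\tau_i^*$-isotypic part of $H^0(C, \Omega_C)$. Since $\dim H^0(C, \Omega_C)_{\tau_i^*} = f(\tau_i^*)$ by definition of the signature, the $\omega_{i,j}$ will form a basis once the pairing is shown to be nondegenerate.

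For the pairing, I would exploit $G$-equivariance. Since $\zeta_{i,j}$ has weight $\tau_i$ and $\omega_{i',j'}$ has weight $\tau_{i'}^*$, the product $\zeta_{i,j}\omega_{i',j'}$ lies in weight $\tau_{i-i'}$, and summing residues over a $G$-orbit of preimages (each fiber over $0$ or $\infty$ is a $G$-torsor of size $m$) produces a factor $\sum_{s=0}^{m-1}\zeta_m^{(i-i')s}$, which vanishes unless $i \equiv i' \pmod{m}$. When $i = i'$, the product collapses to $x^{j'-j-1}\, dx$, the pullback from $\Po$ of a form whose residue at $0$ is $\delta_{j,j'}$ and at $\infty$ is $-\delta_{j,j'}$. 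Summing residues over either fiber yields $\pm m \cdot \delta_{j,j'}$, the desired nonzero constant.

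The main obstacle will be the bookkeeping needed to justify that the residue sum splits cleanly across $G$-orbits: the local expansion of $v_i$ near each preimage $P_s$ of $\infty$ (indexed by $s \in \Z/m\Z$) must factor as $\zeta_m^{is}$ times a power series in $1/x$ that is \emph{independent} of $s$, so that all higher-order contributions to the residue sum vanish by character orthogonality when $i \not\equiv i' \pmod{m}$. This is achieved using the trivialization $y = x^N w$, where $N = (\sum_k a_k)/m \in \Z$, $w^m = \prod_k(1 - x_k/x)^{a_k}$, and $w(P_s) = \zeta_m^s$. Once this is established, only the diagonal entries of the pairing matrix need be computed, and the lemma follows.
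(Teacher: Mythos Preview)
Your argument is correct and, for part (1), essentially identical to the paper's: both compute $\OO_C(U)_{\tau_i}$, $\OO_C(V)_{\tau_i}$, $\OO_C(U\cap V)_{\tau_i}$ via valuation checks on $v_i$ and read off the Čech quotient. Your identification $\alpha_i = f(\tau_i^*)+1$ matches the paper's exponent $-f(\tau_i^*)-1$ in $\OO_C(V)$.

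For part (2) and the pairing, the paper gives no argument at all and simply cites \cite{Dual basis} (Stienstra--van der Put). Your direct approach---checking regularity of $\omega_{i,j}$ at each place and then computing the residue pairing---is a genuine addition. Two remarks on it. First, the $G$-equivariance argument you sketch for the off-diagonal vanishing is already complete in its abstract form: since $\sigma^*(\zeta_{i,j}\omega_{i',j'}) = \zeta_m^{\,i-i'}\,\zeta_{i,j}\omega_{i',j'}$ and $\operatorname{Res}_{\sigma P}(\eta) = \operatorname{Res}_P(\sigma^*\eta)$, summing over a free $G$-orbit immediately gives zero when $i\neq i'$. The explicit trivialization $y = x^N w$ you describe is a valid alternative but not needed; the ``higher-order contributions'' you worry about are automatically handled by the abstract equivariance. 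Second, your diagonal value $\pm m$ is the correct explicit constant (the $m$ preimages of $\infty$, each contributing $-\delta_{j,j'}$); the paper's ``$l$'' in the statement appears to be a placeholder for a nonzero scalar rather than the orbit length used elsewhere. Finally, linear independence of the $\omega_{i,j}$ is immediate from their form $x^{j-1}dx/v_i$, so you do not actually need the pairing to conclude they are a basis once you have counted dimensions.
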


\begin{proof}
By definition, the rings of regular functions on $U$, $V$ and $U\cap V$ are 
\begin{align*}
\OO_C(U) &\cong \bigoplus_{i=1}^{m-1}k[x]v_i,
&
\OO_C(V) &\cong \bigoplus_{i=1}^{m-1}k\Big[\frac{1}{x}\Big]x^{-f(\tis)-1}v_i,
&
\OO_C(U \cap V)&\cong \bigoplus_{i=1}^{m-1}k\Big[x,\frac{1}{x}\Big]v_i.
\end{align*}
We deduce 
\begin{equation}\label{eq: ho}
    \begin{split}
        \hy &= \frac{\OO_C(U \cap V)}{\ocu \oplus \ocv} 
        = \bigoplus_{i=1}^{m-1} \bigoplus_{j=1}^{f({\tau_i}^*)}k \big\{\zij\big\} \text{ with }\zij=\frac{v_i}{x^j},
    \end{split}
\end{equation}
The basis of $Q^{\smvee}$ with the above property is given in \cite[Section 5]{Dual basis}.
\end{proof}

Let $1\leq i\leq m-1$. Recall  $\phi_{\tau_i}$ denotes the restriction of $\phi$ to $Q_{\tau_i}$, $\phi_{\tau_i}:Q_{\tau_i}\to Q_{\tau_{pi}}$. %, $\psi_{\tau_i}: Q_{\tau_i}\to Q_{\tau_{pi}}^{\smvee} $.
In \cite[Lemma 5.1]{Irene}, Bouw computes the Hasse-Witt matrix of $C$, that is the matrix of $\phi$ with respect to the basis of $Q$ given in Lemma \ref{L_bases}.
The coefficient of $\zpij$ in $\phi_{\tau_i}(\zij)$ is \begin{align}
\label{coefficient of phii}
(-1)^N \sum_{n_1+ \dots + n_r=N}\binom{\pby}{n_1} \dots \binom{\pbr}{n_r} x_1^{n_1} \dots x_r^{n_r},    
\end{align} 
where $N=\sum_{k=1}^r \pbk -(jp-j')=p(f({\tau_i}^*)+1)-(f(p{\tau_i}^*)+1)-jp+j'$.

Recall from Remark \ref{Rem_EOHassetrople}, that given a Hasse--Witt triple, the associated mod-$p$ Diedonn\'e module depends on the choice of an extension of $\psi$ to $Q$.
More precisely, it depends on
the choice of a complement $R_0$ to $R_1=\ker(\phi_{\tau})$ in $Q$, as $\psi$ is extended to  $Q$  by setting $\psi$ to vanish on $R_0$.  (In \cite{Moonen algorithm}, Moonen  proves
that different choices of $R_0$ give rise to isomorphic mod-$p$ Dieudonn\'e modules.)

Let  $\psi_{\tau_i}$ denote the restriction of $\psi$ to $Q_{\tau_i}$, $\psi_{\tau_i}:  Q_{\tau_i} \supseteq \ker(\phi_{\tau_i})\to {\rm Im}(\phi_{\tau_i})^\perp\subseteq Q_{\tau_{pi}}^{\smvee} .$
With abuse of notation, we denote by
$\psii: Q_{\tau_i}\to Q_{\tau_{pi}}^{\smvee}$ any extension of $\psii$ obtained by extension by zero from a choice of complement of $\ker(\phi_{\tau_i})$ in $Q_{\tau_i}$.

\begin{remark}\label{strategy}
In the following,  to compute the matrix of $\psii$, instead of working with a basis of $\ker(\phi_{\tau_i})$ and a choice of a complement, we define and compute the coefficients of an auxiliary linear map $\psi_{\tau_i}': Q_{\tau_i} \to Q_{p\tau_i^*}^{\smvee}$. We show that the restriction of $\psi_{\tau_i}'$ to $\ker(\phi_{\tau_i})$ agrees with $\psi_{\tau_i}$, and  is injective. In Proposition \ref{When is psii' correct}, we show that under suitable conditions, 
$\qi=\ker(\psi_{\tau_i}')\oplus \ker(\phi_{\tau_i})$.  
\end{remark}

\begin{definition}\label{Def_valid}
    For any $\tau\in \tg$, we say that a map $\psi_{\tau}':Q_\tau\to Q_{p\tau^*}^{\smvee}$ is a {valid extension} of $\psi_{\tau}$ if ${\psi_{\tau}'}_{\vert \ker(\phi_{\tau})}=\psi_{\tau}$ and $Q_\tau=\ker(\psi_{\tau}')\oplus \ker(\phi_{\tau})$. 
    \end{definition}
  If $\psi_\tau'$ is a valid extension for all $\tau\in \tg$, then $\psi'$ is the extension by zero of $\psi$ associated with $R_0=\ker(\psi')$.

We define $\psi_{\tau_i}': Q_{\tau_i} \to Q_{p\tau_i^*}^{\smvee}$ as follows. Consider the inclusion of $k$-vector spaces
\begin{align*}
    \iota& : H^1(C,\OO_C) \to \ocuv,\quad  \zij\mapsto \frac{v_i}{x^j},
\end{align*}
which identifies $H^1(C,\OO_C)$ with the subspace of $\ocuv$ given in (\ref{eq: ho}). This yields a decomposition as $k$-vector space
\begin{align*}
   \ocuv &=\ocu\oplus \iota(H^1(C,\OO_C)) \oplus \ocv  .
\end{align*}
We denote by
the corresponding projections of $k$-vector spaces; they satisfy $\ker(\pi_1)=\bigoplus_{i=1}^m k[\frac{1}{x}]x^{-1}v_i$, and $\ker(\pi_2)=\bigoplus_{i=1}^m k[x]x^{-\fis}v_i$.

For any $\alpha \in Q_{\tau_i}$, let $f_1=\pi_1(\iota(\alpha)^p)\in \ocu$, and define
\begin{equation}\label{Eq psi'}
\begin{split}
    \psi_{\tau_i}'(\alpha) & = \sum_{k=1}^{f(p\tau_i)}\langle df_{1}, \zpisk\rangle\opisk 
    =\sum_{k=1}^{f(p\tau_i)}( \text{coefficient of $\opisk$ in $df_{1}$})\opisk.
\end{split}
\end{equation}

\begin{proposition}\label{coefficient of psii}
Let $1\leq i\leq m-1$. Assume $\gcd(i,m)=1$. Then, the coefficient of $\opisjp$ in $\psiip(\zij)$ is 
$$-\sum_{k=1}^r \pbk r_{i,j,k}q_{r-j',k},$$
where $q_{r-j',k}$ is the coefficient of $x^{j'-1}$ in $\frac{(x-x_1) \dots (x-x_r)}{(x-x_k)}$,
and $r_{i,j,k}$ is the residue of $\frac{\zij^p}{v_{pi}(x-x_k)}$ at $x=0$, that is 
    $$r_{i,j,k}=(-1)^N\sum_{n_1+ \dots +n_r=N}\binom{\pby}{n_1} \dots \binom{\pbk -1}{n_k} \dots \binom{\pbr}{n_r} x_1^{n_1}\dots x_r^{n_r},$$
where $N=p(f(\tau_i^*)+1)-(f(p\tau_i^*)+1)-pj$.

\end{proposition}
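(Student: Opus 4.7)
The proof is a direct computation unwinding the definition of $\psi_{\tau_i}'$ in \autoref{Eq psi'}. Let $t\equiv pi\pmod m$ with $1\leq t\leq m-1$. Using $y^m=\prod_k(x-x_k)^{a_k}$ one verifies
\[
v_i^p = v_t\cdot P(x),\qquad P(x):=\prod_k(x-x_k)^{\bar a_k},\qquad \bar a_k:=\lfloor p\langle ia_k/m\rangle\rfloor.
\]
Writing $P(x)=x^{jp}Q(x)+R(x)$ with $\deg R<jp$, the $\OO_C(U)$-part of $\iota(\zeta_{i,j})^p=v_t P(x)/x^{jp}$ is $f_1=v_t\,Q(x)$. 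The logarithmic derivative formula $dv_t/v_t=\sum_k \langle ta_k/m\rangle(x-x_k)^{-1}dx$ (from $y^m=\prod(x-x_k)^{a_k}$) then gives $df_1=v_t\,H(x)\,dx$ where $H(x):=Q'(x)+Q(x)\sum_k \langle pia_k/m\rangle/(x-x_k)$.

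A key identity is $v_t\cdot v_{m-t}=F(x):=\prod_k(x-x_k)$, which holds because $\gcd(t,m)=1$ forces $\langle ta_k/m\rangle+\langle -ta_k/m\rangle=1$. Thus $\opisjp=x^{j'-1}v_t F(x)^{-1}\,dx$, and $df_1=\sum_s [x^{s-1}](FH)\cdot\opisk$, so the coefficient of $\opisjp$ in $\psi_{\tau_i}'(\zeta_{i,j})$ equals $[x^{j'-1}](F(x)H(x))$. In characteristic $p$, the relation $p\langle ia_k/m\rangle=\bar a_k+\langle pia_k/m\rangle$ yields $\langle pia_k/m\rangle\equiv -\bar a_k$. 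Setting $F_k:=F/(x-x_k)$ and $\tilde Q_k(x):=(Q(x)-Q(x_k))/(x-x_k)$, the split $QF_k=Q(x_k)F_k+F\tilde Q_k$ (which uses $(x-x_k)F_k=F$) then gives
\[
F(x)H(x) \equiv F(x)\bigl(Q'(x)-\textstyle\sum_k \bar a_k\tilde Q_k(x)\bigr) - \sum_k \bar a_k Q(x_k)F_k(x)\pmod p.
\]

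The heart of the proof --- and the main obstacle --- is the identity $Q'(x)=\sum_k \bar a_k\tilde Q_k(x)$ in $\fpb[x]$, which kills the first term above. I would derive it from the logarithmic derivative $P'(x)=\sum_k \bar a_k P(x)/(x-x_k)$: in terms of $p_n:=[x^n]P$ and the weighted power sums $e_s:=\sum_k\bar a_k x_k^s$, this reads $n\,p_n=\sum_{s\geq 0}e_s\,p_{n+s}$. Specializing to $n=N+jp$ and reducing modulo $p$ gives $N\,p_{N+jp}=\sum_{s\geq 0}e_s\,p_{N+jp+s}$; since $q_N=[x^N]Q=p_{N+jp}$, this is precisely the coefficient identity $[x^{N-1}]Q'(x)=[x^{N-1}]\sum_k\bar a_k\tilde Q_k(x)$. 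Granting this, $[x^{j'-1}](FH)\equiv -\sum_k \bar a_k Q(x_k)\,q_{r-j',k}$ by the definition of $q_{r-j',k}$. Finally, when $\bar a_k\geq 1$ the polynomial $\tilde P_k:=P/(x-x_k)$ satisfies $\tilde P_k(x)=\sum_t x^t\sum_{n>t}p_n x_k^{n-1-t}$, so $r_{i,j,k}=[x^{jp-1}]\tilde P_k=\sum_{n\geq jp}p_n x_k^{n-jp}=Q(x_k)$; expanding $\tilde P_k$ by the multinomial theorem recovers the binomial-sum formula in the statement, and the $\bar a_k=0$ terms drop out automatically. This completes the proposed proof.
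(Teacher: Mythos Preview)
Your proof is correct and follows essentially the same route as the paper's: compute the $\OO_C(U)$-part $f_1=v_{pi}Q(x)$ of $\iota(\zeta_{i,j})^p$ (your $Q$ is the paper's $h$), differentiate using the logarithmic derivative of $v_{pi}$, and then rewrite $v_{pi}\,dx$ via $v_{pi}v_{pi^*}=F(x)$ to extract the coefficient of $\omega_{pi^*,j'}$.

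The only noteworthy difference is in how the key polynomial identity --- your $Q'(x)=\sum_k\bar a_k\,\tilde Q_k(x)$ in $\fpb[x]$, equivalently the paper's claim $(*)$ once one identifies $r_{i,j,k}=Q(x_k)$ --- is established. The paper works with the Laurent series $h_1=P(x)/x^{pj}$: since $(x^{pj})'\equiv 0$ in characteristic $p$, the relation $h_1'=\sum_k\bar a_k\,h_1/(x-x_k)$ holds exactly, and taking non-negative degree parts immediately yields $(*)$. You instead extract coefficients from $P'=\sum_k\bar a_k\,P/(x-x_k)$ and reduce the resulting recurrence $n\,p_n=\sum_s e_s\,p_{n+s}$ modulo $p$ at $n=N+jp$. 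Both arguments rest on the same two ingredients (the logarithmic derivative of $P$ and $jp\equiv 0\pmod p$), so this is a packaging difference rather than a new idea; the paper's Laurent-series version is slightly slicker, while yours makes the role of characteristic $p$ more explicit at the coefficient level.
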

\begin{proof}
Let $\iota(\zij)^p=f_{1,j}+\iota(\phi(\zij))+f_{2,j}$, where $f_{1,j} \in \ocu$ and $f_{2,j} \in \ocv$. We have
$$\iota(\zij)^p=\vip \frac{(x-x_1)^{\pby} \dots (x-x_r)^{\pbr}}{x^{pj}}.$$
Then $f_{1,j}=h(x)\vip$ where
$$h(x)=\left[\frac{\pbf}{x^{pj}}\right]_{\text{deg $\geq 0$}}.$$
We compute the coefficient of $\opisjp$ in $d(h(x)\vip)$.
Since we are in characteristic $p$, 
\begin{equation*}
    \begin{split}
        d(\vip)&=y^{pi}d\left(\frac{1}{(x-x_1)^{\pfy}\dots (x-x_r)^{\pfr}}\right) \\
        &=\frac{y^{pi}}{(x-x_1)^{\pfy}\dots (x-x_r)^{\pfr}}\bigg(\frac{-\pfy}{(x-x_1)}+ \dots +\frac{-\pfr}{(x-x_r)}\bigg)dx \\
        &=v_{pi}\bigg(\frac{-\pfy}{(x-x_1)}+ \dots +\frac{-\pfr}{(x-x_r)}\bigg)dx 
        = v_{pi}\bigg(\frac{-\pby}{(x-x_1)}+ \dots +\frac{-\pbr}{(x-x_r)}\bigg)dx,
    \end{split}
\end{equation*}
where the last equality follows from the equality
$\pfk-\pbk = \floor*{p\fk}=p\fk \equiv 0 \pmod{p}.$
Assume
\begin{equation}\label{eqn: *}
d(h(x))=\sum_{k=1}^r \pbk \frac{h(x)-r_{i,j,k}}{x-x_k} dx. \tag{*}  
\end{equation}
Then
\begin{equation*}
    \begin{split}
    d(f_{1,j}) &= d(\vip)h(x)+\vip d(h(x)) 
    =- \vip \sum_{k=1}^r \pbk \frac{h(x)}{x-x_k} dx + \vip \sum_{k=1}^r \pbk \frac{h(x)-r_{i,j,k}}{x-x_k} dx\\
    &=-\vip \sum_{k=1}^r \pbk \frac{r_{i,j,k}}{x-x_k} dx.
    \end{split}
\end{equation*}
Since $\gcd(i,m)=1$, we have $\vip\vips=(x-x_1) \dots (x-x_r)$ and hence $\vip dx = \frac{(x-x_1) \dots (x-x_r)}{\vips} dx$. Thus,
$$d(f_{1,j})=- \sum_{k=1}^r \pbk r_{i,j,k}\frac{(x-x_1) \dots (x-x_r)}{(x-x_k)} \frac{dx}{\vips}.$$
Therefore, the coefficient of $\opisjp$ is equal to that of $x^{j'-1}$ in $-\sum_{k=1}^r \pbk r_{i,j,k}\frac{(x-x_1) \dots (x-x_r)}{(x-x_k)}$. 

We reduced the statement to the proof of the equality in (*).

Let  $h_1(x)=\frac{\iota(\zij)^p}{\vip}=\frac{\pbf}{x^{pj}}.$ Then,  $d(h_1(x))=\sum_{k=1}^r \pbk \frac{h_1(x)}{x-x_k} dx$, and the residue at $0$ of $\frac{h_1(x)}{x-x_k}$ is equal to  $r_{i,j,k}$.

We write 
$\frac{h_1(x)}{x-x_k}=h_{1,k}(x)+\frac{r_{i,j,k}}{x}+g_{1,k}(x),$
where $h_{1,k}(x)=\left[\frac{h_1(x)}{x-x_k}\right]_{\text{deg $ \geq 0$}}$ and $g_{1,k}(x)$ consists of all terms of degree $\leq -2$.
Then,  $h_1(x)=(x-x_k)h_{1,k}(x)+r_{i,j,k}-\frac{x_kr_{i,j,k}}{x}+(x-x_k)g_{1,k}(x)$.

By definition, $h(x)=\left[ h_1(x)\right]_{\text{deg $ \geq 0$}}$. Hence, $h(x)=(x-x_k)h_{1,k}(x)+ r_{i,j,k}$, and
$$d(h(x))=[d(h_1(x))]_{\text{deg $\geq 0$}}=\sum_{k=1}^r \pbk \left[\frac{h_1(x)}{x-x_k}\right]_{\text{deg $ \geq 0$}} dx.$$
\end{proof}

\begin{lemma}
    For any $1\leq i\leq m-1$, if $\alpha \in \ker(\phi_{\tau_i})$, then $\psi_{\tau_i}'(\alpha)=\psi_{\tau_i}(\alpha)$. 
\end{lemma}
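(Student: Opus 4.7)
The plan is to identify both $\psi_{\tau_i}(\alpha)$ and $\psi_{\tau_i}'(\alpha)$ with the basis expansion of a single global $1$-form on $C$, using $G$-equivariance to isolate the relevant isotypic component.

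First, I would unpack the hypothesis using the direct sum decomposition $\ocuv=\ocu\oplus \iota(\hy)\oplus \ocv$. The middle component of $\iota(\alpha)^p$ is by construction $\iota(\phi_{\tau_i}(\alpha))$, so the assumption $\alpha\in\ker(\phi_{\tau_i})$ forces $\iota(\alpha)^p=f_1+f_2$ with $f_1=\pi_1(\iota(\alpha)^p)\in\ocu$ and $f_2\in\ocv$. Comparing with Proposition \ref{P_Hasse Witt triple} (with $U_1=U$, $U_2=V$), one may take $f_{1,\alpha}=f_1$ and $f_{2,\alpha}=f_2$, so $\psi_{\tau_i}(\alpha)$ is the unique global $1$-form on $C$ whose restriction to $U$ is $df_1$.

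Next, I would invoke $G$-equivariance of $\iota$, $\pi_1$, and $d$. Each summand of the decomposition of $\ocuv$ decomposes further as $\bigoplus_{i'}R\cdot v_{i'}$ for $R\in\{k[x],k[x,x^{-1}]\}$, so $\pi_1$ preserves the $G$-weight decomposition; and since $G$ fixes $x$, the exterior derivative also commutes with the $G$-action. Because $\alpha\in Q_{\tau_i}$ has $G$-character $\tau_i$, raising to the $p$-th power yields $\iota(\alpha)^p$ of character $\tau_{pi}$, hence $f_1\in k[x]v_{pi}$ and $df_1$ has character $\tau_{pi}$. On the $1$-form side, a direct check using Lemma \ref{L_bases} gives that $\omega_{i',k'}=x^{k'-1}v_{i'}^{-1}dx$ carries character $\tau_{-i'}=\tau_{i'}^*$, so the $\tau_{pi}$-isotypic subspace of $\hyd$ is spanned exactly by $\{\omega_{pi^*,k}\mid 1\leq k\leq f(p\tau_i)\}$ (using $-pi^*\equiv pi\pmod m$).

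Finally, I would expand the global $1$-form $\psi_{\tau_i}(\alpha)\in\hyd$ in this basis: it lies in the $\tau_{pi}$-isotypic component, so
\[\psi_{\tau_i}(\alpha)=\sum_{k=1}^{f(p\tau_i)}c_k\,\omega_{pi^*,k}\]
for some $c_k\in k$. Restricting to $U$ and using $\psi_{\tau_i}(\alpha)|_U=df_1$ gives $df_1=\sum_k c_k\,\omega_{pi^*,k}|_U$. Writing both sides in the form $g(x)v_{pi^*}^{-1}dx$ with $g(x)\in k[x]$ and comparing coefficients of $x^{k-1}$ identifies $c_k$ with the coefficient of $\omega_{pi^*,k}$ in $df_1$, which by definition is $\langle df_1,\zpisk\rangle$. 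Hence $\psi_{\tau_i}'(\alpha)=\sum_k c_k\,\omega_{pi^*,k}=\psi_{\tau_i}(\alpha)$. The only mildly delicate point, and the step I expect to merit care in the write-up, is making the coefficient extraction rigorous: a priori $df_1$ is only a $1$-form on $U$, but the character constraint forces $df_1$ into the form $g(x)v_{pi^*}^{-1}dx$ with $g$ a polynomial of degree at most $f(p\tau_i)-1$, at which point reading off the coefficient of $\omega_{pi^*,k}$ is unambiguous.
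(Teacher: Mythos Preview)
Your argument is correct and follows the same route as the paper's proof: once $\phi_{\tau_i}(\alpha)=0$, the element $f_1=\pi_1(\iota(\alpha)^p)$ coincides with the $f_{1,\alpha}$ of Proposition~\ref{P_Hasse Witt triple}, and both sides are the basis expansion of the global $1$-form $(df_1,-df_2)$. The paper compresses this into two lines (``compare the definitions''), whereas you supply the $G$-equivariance justification that the global form actually lies in the span of the $\omega_{pi^*,k}$, which is the point the paper's proof leaves implicit.
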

\begin{proof}
By definition,   $\iota(\alpha)^p=f_1+\iota(\phi(\alpha))+f_2$, where $f_i=\pi_i(\iota(\alpha)^p)$, for $1\leq i\leq 2$. Hence, the statement follows by comparing the definition of $\psii'$ in \autoref{Eq psi'} with that of $\psii$ in Proposition \ref{P_Hasse Witt triple}.
\end{proof}

\begin{lemma}\label{psii' injective}
For any $1\leq i\leq m-1$, $\ker(\phi_{\tau_i})\cap\ker(\psi'_{\tau_i})=\{0\}.$
\end{lemma}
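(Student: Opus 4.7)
The plan is to reduce the claim directly to the injectivity of the Hasse--Witt map $\psi$ on $\ker(\phi)$, which is built into the definition of a Hasse--Witt triple, using the fact (proved in the lemma immediately preceding this statement) that $\psi'_{\tau_i}$ and $\psi_{\tau_i}$ agree on $\ker(\phi_{\tau_i})$.

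More precisely, let $\alpha\in \ker(\phi_{\tau_i})\cap\ker(\psi'_{\tau_i})$. First I would invoke the previous lemma, which asserts that if $\alpha\in\ker(\phi_{\tau_i})$, then $\psi'_{\tau_i}(\alpha)=\psi_{\tau_i}(\alpha)$. Combined with the hypothesis $\psi'_{\tau_i}(\alpha)=0$, this gives $\psi_{\tau_i}(\alpha)=0$.

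Next, recall from the definition of Hasse--Witt triple (Section \ref{dieu}) that $\psi\colon\ker(\phi)\to \mathrm{Im}(\phi)^{\perp}$ is a $\sigma$-linear isomorphism; in particular, it is injective. The restriction $\psi_{\tau_i}$ of $\psi$ to the $\tau_i$-isotypic component $\ker(\phi_{\tau_i})\subseteq \ker(\phi)$ is therefore also injective (with target $Q_{\tau_{pi}}^{\smvee}$, by the compatibility of $\psi$ with the $G$-grading coming from Proposition \ref{P_Hasse Witt triple}, where the construction $\alpha\mapsto(df_{1,\alpha},-df_{2,\alpha})$ is $G$-equivariant up to the twist by Frobenius). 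Hence $\psi_{\tau_i}(\alpha)=0$ forces $\alpha=0$, proving the lemma.

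I do not expect a serious obstacle here: the argument is essentially a one-line consequence of the preceding lemma together with the abstract injectivity of $\psi|_{\ker(\phi)}$. The only point that would need to be stated carefully (and which the authors may or may not spell out) is that the isomorphism $\psi$ respects the isotypic decomposition under the $G$-action, so that its restriction to $\ker(\phi_{\tau_i})$ lands in $Q_{\tau_{pi}}^{\smvee}$ and remains injective; this is immediate from the explicit description of $\psi$ in Proposition \ref{P_Hasse Witt triple}, since the cocycle $(f_{1,\alpha},f_{2,\alpha})$ inherits the $\tau_i$-character from $\alpha^p$, which transforms under $\tau_{pi}=\tau_i^{\sigma_p}$.
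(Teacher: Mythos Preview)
Your argument is correct. Once you know (from the immediately preceding lemma) that $\psi'_{\tau_i}$ agrees with $\psi_{\tau_i}$ on $\ker(\phi_{\tau_i})$, and (from Proposition~\ref{P_Hasse Witt triple} together with the definition of a Hasse--Witt triple in Section~\ref{dieu}) that $\psi\colon\ker(\phi)\to\im(\phi)^\perp$ is a $\sigma$-linear isomorphism, the conclusion $\ker(\phi_{\tau_i})\cap\ker(\psi'_{\tau_i})=\{0\}$ is immediate.

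The paper, however, takes a markedly different route. Rather than invoking the abstract injectivity of $\psi$ coming from Moonen's equivalence, the authors give a direct, explicit verification: they write $\iota(\alpha)^p$ in coordinates, split it as $g(x)v_{pi}+q(\tfrac{1}{x})v_{pi}$ using $\phi(\alpha)=0$, use $\psi(\alpha)=0$ to obtain $d(g(x)v_{pi})=0$ and $d(q(\tfrac{1}{x})v_{pi})=0$ separately, and then derive a contradiction on degrees via a valuation argument on $(x-x_j)$-factors (showing that the powers in $g$ and in $a(x)=x^{pf(\tau_i^*)}q(\tfrac{1}{x})$ must all be congruent to $\lfloor p\langle ia_j/m\rangle\rfloor$ modulo $p$, forcing $\deg(a(x))$ to be too large). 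In effect the paper re-proves, by hand and in this concrete setting, the injectivity of $\psi$ on $\ker(\phi)$ that you cite as a black box. Your approach is shorter and more conceptual; the paper's approach is self-contained and makes the mechanism visible without appealing to Moonen's category equivalence.
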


\begin{proof}

Let $\alpha \in \ker(\phi_{\tau_i}) \cap \ker(\psi'_{\tau_i})$, and write
$\iota(\alpha)=c_1\frac{v_i}{x}+c_2\frac{v_i}{x^2}+ \dots +c_{f(\tau_i^*)}\frac{v_i}{x^{f(\tau_i^*)}}$.
Then,
  $$    \iota(\alpha)^p
      =v_{pi}(x-x_1)^{\pby} \dots (x-x_r)^{\pbr}\left(\frac{c_1x^{f(\tau_i^*)-1}+c_2x^{f(\tau_i^*)-2}+ \dots +c_{f(\tau_i^*)}}{x^{f(\tau_i^*)}} \right)^p.$$ 
   Set $ h(x)=c_1x^{f(\tau_i^*)-1}+c_2x^{f(\tau_i^*)-2}+ \dots +c_{f(\tau_i^*)} $ and $$ \mathcal{H}(x)=\frac{(x-x_1)^{\pby}\dots (x-x_r)^{\pbr}h(x)^p}{x^{pf(\tau_i^*)}}.$$
Write $g(x)= \left[\mathcal{H}(x)\right]_{\deg \geq 0}$ and 
    $q(\frac{1}{x})=\left[\mathcal{H}(x)\right]_{\deg \leq -f(p\tau_i^*)-1}.$

Since $\phi(\alpha)=0$,  $\iota(\alpha)^p=g(x)v_{pi}+q(\frac{1}{x})v_{pi}$, and hence $0=d(g(x)v_{pi})+d(q(\frac{1}{x})v_{pi})$.
Since $\psi(\alpha)=0$, we have $d(g(x)v_{pi})=0$ and $d(q(\frac{1}{x})v_{pi})=0$. 
Hence, to deduce that $\alpha=0$, it suffices to prove that $h(x)=0$. 

Assume by contradiction $h(x) \neq 0$. 
Let 
$0\leq b\leq c\leq f(\tau_i^*)-1$, denote respectively the smallest and largest integer $n$ for which coefficient of $x^{n}$ in $h(x)$ is non-zero.

Then, the highest power of $x$ in $\mathcal{H}(x)$ is $x^{p(f(\tau_i^*)+1)-f(p\tau_i^*)-1+pc-pf(\tau_i^*) }=x^{p(c+1)-1-f(p\tau_i^*)}$. Since the exponent is strictly positive, $g(x)\neq 0$. Similarly, the lowest power of $x$ in $\mathcal{H}(x)$ is $x^{bp-pf(\tau_i^*)}$, and since the exponent is negative, $q(\frac{1}{x})\neq 0$.

Let $1\leq k\leq m-1$ such that $pi \equiv k \pmod{m}$. Then, by definition, $v_{pi}=v_k=\frac{y^k}{(x-x_1)^{\fky} \dots (x-x_r)^{\fkr}}$.
Set 
$$g_1(x)=\frac{g(x)}{(x-x_1)^{\fky}\dots (x-x_r)^{\fkr}}=\frac{g(x)v_k}{y^k}.$$
Then, $g(x)v_{k}=y^kg_1(x)$, and $d(y^kg_1(x))=d(g(x)v_{pi})=0$.

Denote by $y^m=f(x)$ the affine equation of the curve. Then, $my^{m-1}dy=f'(x)dx$, and
\begin{equation*}
    \begin{split}
        0&=d(y^kg_1(x)) =ky^{k-1}g_1(x)dy+y^kg_1'(x)dx =\frac{ky^{k-1}g_1(x)f'(x)}{my^{m-1}}dx +y^{k}g_1'(x) dx \\
        &=\frac{dx}{y^{m-k}}\left(\frac{k}{m}f'(x)g_1(x)+y^mg_1'(x)\right) =\frac{dx}{y^{m-k}}\left(\frac{k}{m}f'(x)g_1(x)+f(x)g_1'(x)\right),
    \end{split}
\end{equation*}
which implies that  $\frac{k}{m}f'(x)g_1(x)+f(x)g_1'(x)=0$. We deduce
$$\left(f(x)^kg_1(x)^m\right)'=\left(mf(x)^{k-1}g_1(x)^{m-1}\right)\left(\frac{k}{m}f'(x)g_1(x)+f(x)g_1'(x)\right)=
0.$$

Hence, $f(x)^kg_1(x)^m=s(x)^p$ for some  rational function $s(x)$.
We deduce \begin{equation}\label{eq1}
   (x-x_1)^{m\langle\frac{ka_1}{m}\rangle} \dots (x-x_r)^{m\langle\frac{ka_r}{m}\rangle}g(x)^m=\prod(x-\beta_s)^{\pm p},
\end{equation}
where $\beta_s$ are the zeros and poles of $s(x)$. 
Similarly, we deduce an analogous expression for $q(\frac{1}{x})$,
\begin{equation}\label{eq2}
(x-x_1)^{m\langle\frac{ka_1}{m}\rangle} \dots (x-x_r)^{m\langle\frac{ka_r}{m}\rangle}q(\frac{1}{x})^m=\prod(x-\gamma_s)^{\pm p}.
\end{equation}
Let $a(x)=x^{pf(\tau_i^*)}q(\frac{1}{x})$. Then, $a(x)$ is a polynomial and $\deg(a(x)) \leq pf(\tau_i^*)-f(p\tau_i^*)-1$.

From \eqref{eq1} and
\eqref{eq2}, we deduce $v_{(x-\alpha)}(g(x)) \equiv 0 \pmod{p}$ and $v_{(x-\alpha)}(a(x)) \equiv 0 \pmod{p}$, for any $\alpha \not \in \{x_1, \dots, x_r\}$, and 
for any $1\leq j\leq r$, and for $t_j$ and $ m_j$  the powers of $x-x_j$ in $g(x)$ and  $a(x)$ respectively, 
$m\langle\frac{ka_j}{m}\rangle+t_jm \equiv 0 \pmod{p}\text{ and }
m\langle\frac{ka_j}{m}\rangle+m_jm \equiv 0 \pmod{p}
$. 
We deduce $t_j \equiv m_j \pmod{p}$.

Recall 
$(x-x_1)^{\pby} \dots (x-x_r)^{\pbr} h(x)^p=x^{pf(\tau_i^*)}g(x)+a(x).$
We deduce $ \pbj \equiv t_j\equiv m_j  \pmod{p}$, and $t_j,m_j \geq \pbj$. The latter inequality implies the contradiction 
$$p(f(\tau_i^*)+1)-f(p\tau_i^*)-1=\sum_{j=1}^r \pbj \leq \sum_{j=1}^r m_j \leq \deg(a(x)) \leq pf(\tau_i^*)-f(p\tau_i^*)-1.$$
Hence, $h(x) =0$, which concludes the proof.
\end{proof}

\begin{proposition}\label{When is psii' correct}
Let $1\leq i\leq m-1$.  Assume either  $f(p\tau_i^*)=0$ or
$f(\tau_i^*)=g(\tau_i)$.
Then, the map $\psi_{\tau_i}'$ is a valid extension of $\psi_{\tau_i}$, that is  
$Q_{\tau_i}=\ker( \psi_{\tau_i})\oplus \ker(\phi_{\tau_i}').$\end{proposition}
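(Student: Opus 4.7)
My plan is to reduce the proposition to a single dimension identity. Write $r_\phi = \dim \im(\phi_{\tau_i})$ and $r_\psi = \dim \im(\psi_{\tau_i}')$. Lemma \ref{psii' injective} already provides the delicate input $\ker(\phi_{\tau_i}) \cap \ker(\psi_{\tau_i}') = \{0\}$, so the sum $\ker(\psi_{\tau_i}') + \ker(\phi_{\tau_i})$ is automatically direct, and the claimed decomposition $Q_{\tau_i} = \ker(\psi_{\tau_i}') \oplus \ker(\phi_{\tau_i})$ amounts to the equality
\[
\dim \ker(\psi_{\tau_i}') + \dim \ker(\phi_{\tau_i}) = \dim Q_{\tau_i} = f(\tau_i^*),
\]
which by rank-nullity is equivalent to $r_\phi + r_\psi = f(\tau_i^*)$.

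For the lower bound, I would package $\phi_{\tau_i}$ and $\psi_{\tau_i}'$ into a single map
\[
\Phi \colonequals (\phi_{\tau_i},\psi_{\tau_i}') \colon Q_{\tau_i} \longrightarrow Q_{p\tau_i} \oplus Q_{p\tau_i^*}^{\smvee},
\]
whose kernel is precisely $\ker(\phi_{\tau_i}) \cap \ker(\psi_{\tau_i}') = \{0\}$ by Lemma \ref{psii' injective}. Thus $\Phi$ is injective, its image has dimension $f(\tau_i^*)$, and since $\im(\Phi) \subseteq \im(\phi_{\tau_i}) \oplus \im(\psi_{\tau_i}')$, we deduce $r_\phi + r_\psi \geq f(\tau_i^*)$.

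The matching upper bound is where the two hypotheses enter. By Lemma \ref{L_bases}, $\dim Q_{p\tau_i} = f(p\tau_i^*)$ and $\dim Q_{p\tau_i^*}^{\smvee} = f(p\tau_i)$, so the codomain bounds give $r_\phi \leq f(p\tau_i^*)$ and $r_\psi \leq f(p\tau_i)$, while the domain $Q_{\tau_i}$ contributes $r_\phi,r_\psi \leq f(\tau_i^*)$. If $f(p\tau_i^*) = 0$, then $r_\phi = 0$ and the domain bound yields $r_\psi \leq f(\tau_i^*)$, so $r_\phi + r_\psi \leq f(\tau_i^*)$. If instead $f(\tau_i^*) = g(\tau_i)$ (equivalently $f(\tau_i) = 0$), then $r_\phi + r_\psi \leq f(p\tau_i^*) + f(p\tau_i) = g(\tau_i) = f(\tau_i^*)$. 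In either case, combining with the lower bound forces equality.

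The argument is essentially a three-line dimension count; the only substantive input is Lemma \ref{psii' injective}, which has already been established. I do not foresee any real obstacle. The one conceptual point worth keeping in mind is that each of the two assumptions is tailored to collapse exactly one of the two ranks to its extreme value: the hypothesis $f(p\tau_i^*) = 0$ annihilates $\phi_{\tau_i}$, while the hypothesis $f(\tau_i^*) = g(\tau_i)$ saturates the combined codomain dimension, leaving the other rank pinned down by the injectivity of $\Phi$.
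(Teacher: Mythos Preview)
Your proposal is correct and takes essentially the same approach as the paper: both arguments use Lemma~\ref{psii' injective} to get the direct sum, then bound $\dim\im(\psi_{\tau_i}')$ and $\dim\im(\phi_{\tau_i})$ by the dimensions of their codomains $Q_{p\tau_i^*}^{\smvee}$ and $Q_{p\tau_i}$, invoking $f(p\tau_i)+f(p\tau_i^*)=g(\tau_i)$ in the second case. Your packaging via the injective map $\Phi=(\phi_{\tau_i},\psi_{\tau_i}')$ and the unified identity $r_\phi+r_\psi=f(\tau_i^*)$ is a slightly cleaner presentation of the same dimension count, whereas the paper treats the case $f(p\tau_i^*)=0$ more directly by observing $\ker(\phi_{\tau_i})=Q_{\tau_i}$.
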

\begin{proof}

Assume $f(p\tau_i^*)=0$. Then $Q_{p\tau_i}=\{0\}$ and since $\phi_{\tau_i}:Q_{\tau_i}\to Q_{p\tau_i}$, we deduce that $\ker(\phi_{\tau_i})=Q_{\tau_i}$, and hence $\ker(\psii')=0$, by Lemma \ref{psii' injective}.

Assume $f(\tau_i^*)=g(\tau_i)$. Recall $g(\tau_i)=g(\tau_{pi})$, since $\tau_i,\tau_{pi}$ are in the same orbit under Frobenius.  
By Lemma \ref{psii' injective},
it suffices to 
show that $\dim {\rm Im}(\psi_{\tau_i}') \leq \dim \ker(\phi_{\tau_i})$.
The inclusion $\psi_{\tau_i}'(Q_{\tau_i}) \subseteq Q_{p\tau_i^*}^{\smvee}$ 
implies $\dim {\rm Im} (\psi_{\tau_i}')\leq \dim(Q_{p\tau_i^*}^{\smvee})= f(p\tau_i)$. On the other hand, 
 $$\dim(\ker(\phi_{\tau_i}))=\dim(Q_{\tau_i})-\dim({\rm Im}(\phi_{\tau_i}))\geq f(\tau_i^*)-\dim(Q_{p\tau_i})= g(\tau_i)-  f(p\tau_i^*)=f(p\tau_i).$$
 Hence, $\dim {\rm Im}(\psi_{\tau_i}')\leq f(p\tau_i)\leq \dim \ker(\phi_{\tau_i})$.
\end{proof}

\section{Entries of the extended Hasse--Witt matrix}\label{max monomial}

In Proposition \ref{coefficient of psii}, we computed the entries of the extended Hasse--Witt matrix as polynomials in  $x_1, \dots, x_r$ and showed that they are homogenous polynomials. In this section, we show that some of these entries don't vanish identically by identifying certain non-zero monomials.

Let $(m,r,a_1,\dots,a_r)$ be a cyclic monodromy datum as in Notation \ref{new part mu ord}.
Let $p$ be a rational prime.
For the remaining of the paper, we assume $p>m(r-2)$. This condition agrees with the assumption in \cite[Theorem 6.1]{Irene}.

\begin{notation}\label{define c C N}
For $1 \leq i \leq m-1$ and $N \in \mathbb{N}$, we define 
 \begin{align*}
     c_i(N)&=\min\{c : \pby + \dots +\pbc >N\}, \\
 C_i(N)&=N-\pby- \dots -\pbcm, \text{where $c=c_i(N)$},\\
  X_i(N)&=x_1^{\pby} \dots x_{c-1}^{\lfloor{p\langle\frac{ia_{c-1}}{m}\rangle\rfloor}}x_c^{C}, \text{where }c=c_i(N), C=C_i(N),\\
 s_i&=\sum_{k=1}^r \pbk. 
 \end{align*}
 Note that $1\leq c_i(N)\leq r$, $0 \leq C_i(N)< \pbc$, and   
%\begin{align*}
$
\Ci\equiv \sum_{k=c}^r\pbk\pmod{p}.
$
%\end{align*}
\end{notation}

\begin{lemma}\label{cineqr}
For any $j \geq 1$, we have $\ci \neq r$.
\end{lemma}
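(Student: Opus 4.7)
The plan is to argue by contradiction from the very definition of $c_i$. Suppose $c_i(s_i - pj) = r$ for some $j \geq 1$. By definition of $c_i$ as the minimum, the index $c = r-1$ fails the defining inequality, so
\[
\sum_{k=1}^{r-1} \pbk \;\leq\; s_i - pj.
\]
Since $s_i = \sum_{k=1}^r \pbk$, subtracting gives $\pbr \geq pj$, and using $j \geq 1$ this forces $\pbr \geq p$.

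The contradiction then comes from the elementary upper bound on a single term $\pbr$. The key observation is that $\langle ia_r/m\rangle \in [0,1)$ by definition of the fractional part, hence
\[
0 \;\leq\; p\langle ia_r/m\rangle \;<\; p,
\]
which gives $\pbr = \lfloor p\langle ia_r/m\rangle\rfloor \leq p-1 < p$. This directly contradicts $\pbr \geq p$, so we must have $c_i(s_i - pj) \neq r$.

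There is no real obstacle here; the only subtlety is checking that $c_i(s_i - pj)$ is well-defined in the first place, i.e., that the set $\{c : \sum_{k=1}^{c} \pbk > s_i - pj\}$ is nonempty. But this is immediate: taking $c = r$ gives $\sum_{k=1}^{r} \pbk = s_i > s_i - pj$ since $pj \geq p > 0$. So $c_i(s_i-pj)$ exists and lies in $\{1,\dots,r\}$, and the argument above rules out the value $r$. Note that the stronger hypothesis $p > m(r-2)$ introduced earlier in the section is not needed for this particular lemma; the conclusion $c_i(s_i - pj) \neq r$ rests purely on the trivial bound $\pbr < p$ together with $j \geq 1$.
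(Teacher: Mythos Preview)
Your proof is correct and follows essentially the same argument as the paper: assume $c_i(s_i-pj)=r$, deduce $\sum_{k=1}^{r-1}\pbk \leq s_i - pj$, hence $\pbr \geq p$, which contradicts the trivial bound $\pbr < p$. The only difference is that you spell out the well-definedness of $c_i(s_i-pj)$ and the reason $\pbr < p$, which the paper leaves implicit.
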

\begin{proof}
Assume for the sake of contradiction that $\ci=r$. Then $\sum_{k=1}^{r-1}\pbk \leq s_i-pj \leq s_i-p$. So $p \leq \pbr$, which is impossible. 
\end{proof}

We consider the lexicographical order on the monomials such that $x_1>x_2> \dots > x_r$. Given a monomial $X$, we denote by $v_{x_s}(X)$ the power of $x_s$ in $X$, $1\leq s\leq r$. 
In the following, given a polynomial $h(x_1,\dots,x_r)$, we denote by $m(h(x_1,\dots,x_r))$ its maximal monomial without coefficient, and  by $M(h(x_1,\dots,x_r))$ the maximal monomial including the coefficient.

For $1\leq i\leq m-1$, recall $\phii$ and $\psii$ denote the restrictions of $\phi$ and $\psi$ to $Q_{\tau_i}$, respectively. We identify $\phii$ and $\psii$ with their matrices with respect to the bases in Lemma \ref{L_bases}, and denote respectively by $\phii(j',j)$ and $\psiip(j',j)$ the  $(j',j)$-th entries of $\phii$ and $\psiip$,  $1\leq j'\leq f(\tau^*_{pi})$ and $1\leq j\leq f(\tau^*_i)$. 

In \cite[Lemma 6.5]{Irene}, assuming $p>m(r-2)$, Bouw proved \begin{align}\label{maxmonomialbouw}
m(\phii(j',j))=X_i(s_i-pj+j').\end{align} In the following, we identify certain monomials in $\psii'(1,1)$ that have a non-zero coefficient. 

\begin{proposition}\label{maximal monomial}
Assume $p>m(r-2)$.
Consider $1\leq i\leq m-1$ with $\gcd(i,m)=1$. Write $c=c_i(s_i-p)$. Then, the monomial $ x_1x_2 \dots x_{r-1}\XI$ has a non-zero coefficient in $\psii'(1,1)$.
In particular, $\psii'(1,1)$ is a homogeneous polynomial of degree $s_i-p+r-1$ which is not identically zero. 
\end{proposition}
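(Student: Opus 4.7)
The plan is to read off the coefficient of the target monomial $T := x_1 x_2 \cdots x_{r-1} X_i(s_i-p)$ directly from the formula in Proposition \ref{coefficient of psii}, specialized to $j = j' = 1$. Setting $N = s_i - p$ (which is exactly the exponent appearing in the definition of $r_{i,1,k}$ for $j=1$, via the identity $s_i = p(f(\tau_i^*)+1) - (f(p\tau_i^*)+1)$), each summand
\[
-\pbk\, r_{i,1,k}\, q_{r-1,k}
\]
is a product of a polynomial of degree $N$ and a polynomial of degree $r-1$, which immediately gives the asserted homogeneity of degree $s_i - p + r - 1$.

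The key reduction is that only the term $k=r$ can contribute to the coefficient of $T$. I would first compute $q_{r-1,k}$: it is the constant term of $\prod_{s\neq k}(x-x_s)$, hence equals $(-1)^{r-1}\prod_{s\neq k}x_s$. By Lemma \ref{cineqr} applied with $j=1$, we have $c := c_i(s_i-p) \leq r-1$, so $X_i(s_i-p)$ involves only $x_1,\dots,x_c$ and in particular $v_{x_r}(T) = 0$. For any $k<r$ the factor $\prod_{s\neq k}x_s$ contains $x_r$, so every monomial of $r_{i,1,k}\cdot q_{r-1,k}$ has positive $x_r$-valuation; this is incompatible with $T$. Hence only $k=r$ contributes, and the problem reduces to extracting the coefficient of $T/\prod_{s\neq r}x_s = X_i(s_i-p)$ in $r_{i,1,r}$.

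Inspecting the explicit sum defining $r_{i,1,r}$, the only multi-index $(n_1,\dots,n_r)$ with $n_1+\cdots+n_r=N$ that yields the monomial $X_i(s_i-p) = x_1^{\pby}\cdots x_{c-1}^{\lfloor p\langle ia_{c-1}/m\rangle\rfloor}x_c^{C_i(s_i-p)}$ is
\[
\bigl(\pby,\,\dots,\,\lfloor p\langle ia_{c-1}/m\rangle\rfloor,\,C_i(s_i-p),\,0,\,\dots,\,0\bigr);
\]
all the binomial coefficients but one collapse to $1$, leaving $\binom{\pbc}{C_i(s_i-p)}$. Combining all signs, I expect the coefficient of $T$ in $\psi_{\tau_i}'(1,1)$ to be $\pm\, \pbr\,\binom{\pbc}{C_i(s_i-p)}$.

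To finish I would verify non-vanishing modulo $p$. Since $\gcd(i,m)=1$ and $1\leq a_r\leq m-1$, we have $\langle ia_r/m\rangle \geq 1/m$, and the hypothesis $p>m(r-2)\geq m$ yields $1\leq \pbr \leq p-1$. By definition $0 \leq C_i(s_i-p) < \pbc < p$, so the numerator and denominator of $\binom{\pbc}{C_i(s_i-p)}$ are products of integers strictly smaller than $p$, and the binomial is nonzero modulo $p$. The only real subtlety is the qualitative observation in the second paragraph: recognising that $c_i(s_i-p)\neq r$ forces $X_i(s_i-p)$ to be free of $x_r$, which collapses the entire sum to the single term $k=r$ and a single binomial. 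After this, everything is a routine degree and sign computation.
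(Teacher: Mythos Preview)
Your proposal is correct and follows essentially the same strategy as the paper: specialize Proposition~\ref{coefficient of psii} to $j=j'=1$, use Lemma~\ref{cineqr} to ensure $c\leq r-1$, isolate the $k=r$ summand, and read off the coefficient $\pm\pbr\binom{\pbc}{C_i(s_i-p)}$. The one minor difference is in how the terms $k<r$ are eliminated: the paper computes the maximal monomials of $q_{r-1,k}$ and $r_{i,1,k}$ in all cases and argues via lexicographic order that $T$ is the overall maximal monomial, whereas you use the cleaner observation that $q_{r-1,k}$ carries a factor of $x_r$ for $k<r$ while $v_{x_r}(T)=0$---this is a tidy shortcut that avoids the case analysis, though it does not by itself show that $T$ is the \emph{maximal} monomial (which the paper's argument does, and which is implicitly used later in Section~\ref{Section 0,1 in signature}).
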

%\begin{remark}
    %In fact, we can show that the above monomial is the maximal monomial appearing in $\psiip(j',j)$.
%\end{remark}
%\begin{remark}
%By Lemma \ref{cineqr},  when $r \leq 5$, we have $\ci \leq r-j'+2$.
%\end{remark}
\begin{proof}
%Let $j',j$ such that $1\leq j\leq \fis$ and $1\leq j'\leq f(p\tau_i)$. Note that $r-j'+1 \geq 3$ since $j' \leq r-2$.
For $1\leq k\leq r$, let $q_{r-1,k}$ and $r_{i,1,k}$ be as in Proposition \ref{coefficient of psii}.

%Recall from Proposition \ref{coefficient of psii} that
%$$\psii'(j',j)=-\sum_{k=1}^r \pbk r_{i,j,k}q_{r-j',k},$$
%where $q_{r-j',k}$ is the coefficient of $x^{j'-1}$ in $\frac{(x-x_1) \dots (x-x_r)}{(x-x_k)}$,
%(that is,  the symmetric polynomial of degree $r-j'$, missing $x_k$, multiplied by $(-1)^{r-j'}$), 
%$$r_{i,j,k}=(-1)^{s_i-pj}\sum_{n_1+ \dots +n_r=s_i-pj}\binom{\pby}{n_1} \dots \binom{\pbk -1}{n_k} \dots \binom{\pbr}{n_r}x_1^{n_1} \dots x_r^{n_r}.$$
%
Then, the maximal monomial in $q_{r-1,k}$ is
\begin{align}\label{mq}
M(q_{r-1,k})=\begin{cases}
(-1)^{r-1}x_1 \dots x_{r-1} &\text{ if } k = r, \\
   (-1)^{r-1}x_1 \dots x_{r-1} \frac{x_{r}}{x_k} &\text{ if }k \leq r-1.
\end{cases}
\end{align}

The maximal monomial in $r_{i,1,k}$  is 
\begin{align}\label{mr}
M(r_{i,1,k})=\begin{cases}
  (-1)^{s_i-p}\binom{\pbc}{C_i(s_i-p)}X_i(s_i-p) &\text{ if }    k > c_i(s_i-p),\\
    (-1)^{s_i-p}\binom{\pbc -1}{C_i(s_i-p)}X_i(s_i-p) &\text{ if }    k = c_i(s_i-p) ,\\
  (-1)^{s_i-p}\binom{\pbc}{C_i(s_i-p)+1}X_i(s_i-p)\frac{x_c}{x_k} &\text{ if }    k < c_i(s_i-p).
\end{cases}
\end{align}
By Lemma \ref{cineqr}, $c_i(s_i-p) \leq r-1$. Then, both $q_{r-1,k}$ and $r_{i,1,k}$ have larger monomials when $k=r$.
We deduce from the equality $\psii'(j',j)=-\sum_{k=1}^r \pbk r_{i,j,k}q_{r-j',k}$ (Proposition \ref{coefficient of psii}) that the coefficient of the monomial $x_1x_2 \dots x_{r-1}X_i(s_i-p)$ in $\psii'(1,1)$ is
$$-(-1)^{r-1+s_i-p}\binom{\pbc}{C_i(s_i-p)}\pbr,$$
which does not vanish modulo $p$.
\end{proof}

\begin{remark}
    In general for $p>m(r-2)$, $1\leq i\leq m-1$ with $\gcd(i,m)=1$, $1\leq j'\leq f(\tau^*_{pi})$ and $1\leq j\leq f(\tau^*_i)$. Write $c=\ci$. Then, the maximal monomial in $\psii'(j',j)$ is
\begin{equation*}
    m(\psii'(j',j)) = 
       \begin{cases}
      x_1x_2 \dots x_{r-j'}\XI & \text{if $\ci \leq r-j'$}\\
     x_1x_2 \dots x_{r-j'-1}x_c\XI & \text{if $\ci > r-j'$}.\\
    \end{cases}
\end{equation*}
However, we will not be needing this more general result, so we omit the proof.
\end{remark}

\section{$p$-ordinariness in special instances}\label{Section 0,1 in signature}

Let $(m,r,a_1,\dots,a_r)$ be a cyclic monodromy datum and $p$ a rational prime, as in Notation \ref{new part mu ord}. %, and  identify $\tg=\Z/l\Z=\{b\mid 0\leq b\leq m-1\}$. 
%Let $p$ be a rational prime. 
Assume $p>m(r-2)$.
%for $0< i < l$,  there is a character $\tau_i\in\tg$.

%Recall %that we have fixed a monodromy datum $(m,r,a_1,\dots,a_r)$ and a prime $p$ satisfying the conditions of 
%Notation \ref{new part mu ord}, with  $4\leq r\leq 5$. 

For $\xb \in \mlra(\fpb)$, let $\ct$ be the normalization of $y^m=(x-x_1)^{a_1} \dots (x-x_r)^{a_r}$, and denote by
%Given a Frobenius orbit $\OO$ in $\og$ and $\tau \in \OO$, let 
$\wt(\ct)$ the Weyl coset representative at the character $\tau$ of the mod-$p$ Dieudonné module of $C_{\tb}$.

%smooth curves in the family as follows. $\mathcal{M}(m,r,\au)$ is the open subset of $\fpb^r$ such that $\xb=(x_1, \dots x_r)$ satisfies $x_i \neq x_j$ for $i \neq j$. In other words, $\mathcal{M}(m,r,\au)$ is $\fpb^r$ minus the weak diagonal. Then,  
%For each $\xb \in  \mathcal{M}(m,r,\au)$, we associated $\ct$ which is the normalization of $y^m=(x-x_1)^{a_1} \dots (x-x_r)^{a_r}$, and for each $\tau \in \OO$, we have $\wt(C_t)$ to be the Weyl coset representatives at character $\tau$ of the Dieudonné module associated to $C_{\tb}$. 
\begin{definition}\label{generically maximal}
We say that $\wt$ is generically maximal if there exists a non-empty open subset $U$ of $\mlra(\fpb)$ such that for every $\tb \in U$, $\wt(\ct)$ is maximal.  
\end{definition}

The goal of this section is to prove the following result.

\begin{theorem}\label{signature 1}
Let $\OO$  the Frobenius orbit in $\tg^\n$. %of $\tau_b\in\tg$, for some $0\leq b\leq m-1$ satisfying $\gcd(b,l)=1$.
Assume $\FF(\OO)$ contains $\{0,1\}$. 

If $\tau\in \OO$ satisfies either $f(\ts)=0$ or $f(\ts)=1$,  then  $\wt$ is generically maximal.
\end{theorem}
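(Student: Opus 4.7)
The strategy is to apply Theorem \ref{numerical criteria} to the orbit $\OO$ with $u=2$, taking $f_1=0<f_2=1$ (both in $\FF(\OO)$ by hypothesis) and reference elements $\tau_j\in\OO$ satisfying $f(\tau_j^*)=f_j$. The conclusion will then match the statement. The $j=1$ dimension condition $\dim(\pi_{\tau_1}\circ H_{\tau_1,l-1}\circ\cdots\circ H_{\tau_1,0}(M_{\tau_1}))=0$ is free of content: $Q_{\tau_1}$ has dimension $f(\tau_1^*)=0$, so the codomain of $\pi_{\tau_1}$ is zero. All the work lies in the $j=2$ condition: that the analogous composition has dimension $1$ for $\tb$ in a nonempty Zariski open subset of $\mlra(\fpb)$.

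Since $Q_{\tau_2}$ is one-dimensional, it is enough to exhibit a single $\tb$ at which this composition is nonzero, and by the explicit formulas of Section \ref{duality} every entry of $H_{\tau_2,i}$ is a polynomial in $x_1,\dots,x_r$. I would trace a basis vector of $Q_{\tau_2}$ through the $l$ steps. The operators come in two shapes: at indices with $f(p^i\tau_2^*)\geq 1$, $H_{\tau_2,i}=F$ is described in the decomposition $M_{p^i\tau_2}=Q_{p^i\tau_2}\oplus Q_{p^i\tau_2^*}^{\smvee}$ by the block matrix $\bigl[\begin{smallmatrix}\phi & 0\\ \psi' & 0\end{smallmatrix}\bigr]$, using the valid extension $\psi'$ of Section \ref{duality}; at indices where $f(p^i\tau_2^*)=0$ (which exist by hypothesis), $Q_{p^i\tau_2}=0$, so $M_{p^i\tau_2}=Q_{p^i\tau_2^*}^{\smvee}$ and $H_{\tau_2,i}=F+V'$ reduces to $V'$, which can be evaluated via the definition $V'(e)=F(\check e)^{\smvee}$ and the polarization identification $M_{p^i\tau_2}^{\smvee}\cong M_{p^i\tau_2^*}$.

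The key ingredient is Proposition \ref{maximal monomial}, producing a specific monomial with nonzero coefficient in the entry $\psi'_\tau(1,1)$; by the polarization-dual description, an analogous monomial has nonzero coefficient in the relevant entry of $V'$ at transition indices. Together with Bouw's leading-monomial identity \eqref{maxmonomialbouw} for the $\phi$-entries, this allows one to follow a distinguished monomial through all $l$ steps and verify that it survives the final projection. The main obstacle I expect is bookkeeping: at each step, one must isolate a monomial that is not cancelled by competing contributions from other summands and that ultimately lands in the one-dimensional block $Q_{\tau_2}$ under $\pi_{\tau_2}$. This is manageable because $f(\tau_2^*)=1$ forces every intermediate $Q_{p^i\tau_2}$ to have dimension at most $1$, so the $\phi$-blocks act as scalars, and the rigid transitions $1\leftrightarrow 0$ along the orbit determine at each step whether $\phi$, $\psi'$, or $V'$ carries the flow.
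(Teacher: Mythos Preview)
Your overall strategy matches the paper's: apply Theorem~\ref{numerical criteria} with $u=2$, dispose of the $f_1=0$ condition trivially, and reduce the $f_2=1$ condition to showing that a certain iterated composition $Q_{\tau_2}\to Q_{\tau_2}$ is a nonzero polynomial in $x_1,\dots,x_r$. Your identification of $H_{\tau_2,i}$ as either $F$ or $V'$ (according to whether $f(p^i\tau_2^*)\geq 1$ or $=0$) is also correct, and the paper carries out exactly this reduction, rewriting $\pi_{\tau_2}\circ H_{\tau_2,l-1}\circ\cdots\circ H_{\tau_2,0}$ as a composition $A_{l-1}\circ\cdots\circ A_0$ where each $A_i$ is one of $\phi$, $\check\phi$, $\psi'$, $\check\psi'$.

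The gap is in your final paragraph. You assert that $f(\tau_2^*)=1$ forces $\dim Q_{p^i\tau_2}\leq 1$ for every $i$, so that the intermediate maps are scalars and there is a single ``path'' to follow. This is false: $\dim Q_{p^i\tau_2}=f(p^i\tau_2^*)$ can take any value in $\FF(\OO)$, not just $0$ or $1$. For instance, when $r=5$ and $\FF(\OO)=\{0,1,2,3\}$, some intermediate $Q_{p^i\tau_2}$ have dimension $2$ or $3$; and at indices with $f(p^i\tau_2^*)=0$ the relevant space is $Q_{p^i\tau_2^*}^{\smvee}$, of dimension $f(p^i\tau_2)=g(\OO)=r-2$. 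Consequently the composition is a genuine sum over many combinatorial paths $J:\{0,\dots,l\}\to\mathbb N$ with $J(0)=J(l)=1$ and $1\leq J(i)\leq a(i)$, and the real obstacle---which you correctly anticipate but then incorrectly dismiss---is ruling out cancellation among these paths.

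The paper's resolution is a $p$-adic distinctness argument. Writing the contribution of path $J$ as $R_J=\prod_i R_{J,i}^{\,p^{l-i-1}}$ with maximal monomial $T_J=\prod_i T_{J,i}^{\,p^{l-i-1}}$, one checks (using $p>m(r-2)$) that every exponent $v_{x_s}(T_{J,i})$ is strictly less than $p$; hence the base-$p$ expansion of each $v_{x_s}(T_J)$ is uniquely determined, forcing $T_{J_1,i}=T_{J_2,i}$ for all $i$ whenever $T_{J_1}=T_{J_2}$. One then recovers $J$ from the degrees $\deg T_{J,i}$ by backward induction. Combined with the nonvanishing of the constant path $J\equiv 1$ (which does follow from Proposition~\ref{maximal monomial} and Bouw's formula, as you say), this yields $\sum_J R_J\neq 0$. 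Without this non-cancellation step your argument does not close.
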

Note that the above Theorem immediately implies the following corollary.
\begin{corollary}
Let $\OO$ be an Frobenius orbit in $\tg^\n$. %of $\tau_b\in\tg$, for some $0\leq b\leq m-1$ satisfying $\gcd(b,l)=1$.
%Let $\OO$  the Frobenius orbit of $\tau_b$, for some $0< b< l$ satisfying $\gcd(b,l)=1$.
%Let $0< b< l$ satisfying  $\gcd(b,l)=1$. Denote by $\OO$  the Frobenius orbit of $\tau_b$.  
Assume $\FF(\OO)$ contains $\{r-2,r-3\}$.

If $\tau \in \OO$ satisfies either $\fts =r-2$ or $\fts=r-3$, then $\wt$ is generically maximal.
\end{corollary}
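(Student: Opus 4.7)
The plan is to deduce the corollary from Theorem \ref{signature 1} by transferring hypotheses from $\OO$ to the dual orbit $\OO^*$, using the duality $\tau\mapsto\ts$ on characters together with the duality relation on Weyl coset representatives recorded in Remark \ref{rmk}.

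First I would observe that since $\OO\subseteq\tg^{\n}$, every $\tau_i\in\OO$ has $\gcd(i,m)=1$, so combining \autoref{signature} with the identity $\langle x\rangle+\langle -x\rangle=1$ for $x\notin\Z$ yields $f(\tau)+\fts=r-2$ on $\OO$; in particular $g(\OO)=r-2$. Hence the hypothesis $\FF(\OO)\supseteq\{r-2,r-3\}$ is equivalent to $\{0,1\}\subseteq\{f(\tau)\mid\tau\in\OO\}=\FF(\OO^*)$. Next I would check that $\OO^*=\{\ts\mid\tau\in\OO\}$ is again a Frobenius orbit in $\tg^{\n}$: duality preserves $\tg^{\n}$ (since $\ker(\ts)=\ker(\tau)$) and commutes with Frobenius (since $(\tau^p)^*=(\ts)^p$). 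Consequently Theorem \ref{signature 1} is available for $\OO^*$.

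Given $\tau\in\OO$ with $\fts\in\{r-2,r-3\}$, I would set $\tau':=\ts\in\OO^*$, so that $f((\tau')^*)=f(\tau)=(r-2)-\fts\in\{0,1\}$. Theorem \ref{signature 1} applied to $\OO^*$ then shows that $w_{\ts}$ is generically maximal on $\mlra(\fpb)$. Finally, Remark \ref{rmk} provides the pointwise equivalence that $\wt$ is maximal if and only if $w_{\ts}$ is maximal, so the open subset of $\mlra(\fpb)$ on which $w_{\ts}$ is maximal coincides with the one on which $\wt$ is maximal, and generic maximality of $\wt$ follows.

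The only nontrivial point is the book-keeping in the first paragraph, verifying that the duality $\tau\mapsto\ts$ respects both the Frobenius-orbit structure on $\tg^{\n}$ and the complementary relation $f(\tau)+\fts=g(\OO)$; beyond this, the corollary is an immediate translation of Theorem \ref{signature 1} along the involution $\OO\leftrightarrow\OO^*$.
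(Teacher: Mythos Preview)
Your proposal is correct and follows essentially the same approach as the paper: both reduce to Theorem \ref{signature 1} applied to the dual orbit $\OO^*$ and then invoke Remark \ref{rmk} to transfer generic maximality from $w_{\ts}$ back to $\wt$. The paper's proof is terser, taking the identity $f(\tau)+\fts=r-2$ and the fact that $\OO^*\subseteq\tg^{\n}$ as understood, whereas you spell these out explicitly.
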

\begin{proof}
If $\FF(\OO)$ contains $\{r-2,r-3\}$, then $\FF(\OO^*)$ contains $\{0,1\}$, and for any $\tau \in \OO$ such that $\fts =r-2$ or $r-3$, $\ft=0$ or $1$. Therefore, $\wts$ is generically maximal. Therefore, $\wt$ is generically maximal. 
\end{proof}

\begin{proof}[Proof of Theorem \ref{signature 1}]
%Note that the assumption $m\geq 3$ from Section \ref{max monomial}  is satisfied, since $|\FF(\OO)| \geq 2$.
Since $\FF(\OO)\supseteq \{0,1\}$, we have $f_{\OO,1}=0$ and $f_{\OO,2}=1$. Consider $\tau_{1},\tau_{2}\in\OO$ satisfying $f(\tau_{1}^*)=0$ and $f(\tau_{2}^*)=1$. Let $l=l(\OO)$ be the size of the orbit. By Theorem \ref{numerical criteria}, it suffices to show that the following two conditions hold generically: 
\begin{enumerate}[leftmargin=*]
    \item $\dim(\pi_{\tau_{1}}\circ H_{\tau_{1},l-1} \circ \dots \circ H_{\tau_{1},0}(M_{\tau_{1}}))=0$;
    \item
   $\dim(\pi_{\tau_{2}}\circ H_{\tau_{2},l-1} \circ \dots \circ H_{\tau_{2},0}(M_{\tau_{2}}))=1.$
\end{enumerate}
Note that the first condition holds trivially since $\pi_{\tau_{1}}:M_{\tau_{1}}\to Q_{\tau_{1}}$ and $\dim(Q_{\tau_{1}})=f(\tau_{1}^*)=0$. We focus on the second condition. For simplicity, write $\tau=\tau_{2}$, hence $f(\ts)=1$. 

We rewrite the second condition explicitly in terms of $\phi,\psi$.
Recall the notation for Hasse--Witt triple from Section \ref{duality}. For $0 \leq i \leq l-1$, 
%recall the following choices of bases from Section \ref{}
%\begin{align*}
%&\{\zeta_{p^i\tau^*,j}:1 \leq j \leq f(p^i\tau) \} \text{ for } Q_{p^i\tau^*},
%&
%&\{\zeta_{p^{i+1}\tau^*,j}: 1 \leq j \leq f(p^{i+1}\tau) \} \text{ for } Q_{p^{i+1}\tau^*},\\
%& \{\omega_{p^i\tau^*,j}: 1 \leq j \leq f(p^i\tau) \} \text{ for } Q_{p^i\tau^*}^{\smvee},
%&
%&\{\omega_{p^{i+1}\tau^*,j}: 1 \leq j \leq f(p^{i+1}\tau) \} \text{ for } Q_{p^{i+1}\tau^*}^{\smvee}.
%\end{align*}
consider the map $\check{\phi}_{p^i\tau^*}: Q_{p^i\tau^*}^{\smvee} \to Q_{p^{i+1}\tau^*}^{\smvee}$, defined as $\check{\phi}_{p^i\tau^*}(x)=\phi_{p^i\tau^*}(\check{x})^{\smvee}$, and the map $\check{\psi}_{p^i\tau^*}: Q_{p^i\tau^*}^{\smvee} \to Q_{p^{i+1}\tau^*}$, defined as $\check{\psi}_{p^i\tau^*}(x)=\psi_{p^i\tau^*}(\check{x})^{\smvee}$.
Then, with respect to the choice of bases for $Q_{\tau'}$ and $Q^{\smvee}_{\tau'}$ given in Section \ref{duality}, the matrices representing $\check{\phi}_{p^i\tau^*}$ and $\phi_{p^i\tau^*}$ (respectively,  $\check{\psi}_{p^i\tau^*}, \psi_{p^i\tau^*}$) are the same.
%Now, fix a $\tau\in\OO$ with $\fts=1$.
For $0\leq i\leq l-1$, we define
\begin{equation}\label{Eq_A_i}
    A_{i}=
    \begin{cases}
    \phi_{p^i\tau}: Q_{p^i\tau}\to Q_{p^{i+1}\tau} &\text{if } f(p^i\tau^*)\geq 1, f(p^{i+1}\tau^*)\geq 1,\\
    \check{\phi}_{p^i\tau^*}: Q_{p^i\tau^*}^{\smvee} \to Q_{p^{i+1}\tau^*}^{\smvee}  &\text{if } f(p^i\tau^*)=0, f(p^{i+1}\tau^*)=0,\\
    \psi_{p^i\tau}: Q_{p^i\tau}\to Q_{p^{i+1}\tau^*}^{\smvee} &\text{if } f(p^i\tau^*)\geq 1, f(p^{i+1}\tau^*)=0,\\
    \check{\psi}_{p^i\tau^*}: Q_{p^i\tau^*}^{\smvee} \to Q_{p^{i+1}\tau^*} &\text{if } f(p^i\tau^*)=0, f(p^{i+1}\tau^*)\geq 1.
    \end{cases}
\end{equation}
By Proposition \ref{When is psii' correct}, for any $0\leq i\leq l-1$, in the definition of $A_i$, we may replace $\psi$ with $\psi'$.

Note that the composition
$A_{l-1}\circ \dots \circ A_{0} :Q_{\tau}\to Q_{\tau}$
is well defined. 
%By Proposition \ref{When is psii' correct}, we know that for $0\leq i\leq |\OO|-1$ with either $f(p^{i+1}\ts)=0$ or $f(p^i\ts)=0$, $\psi'$ is a valid extension of $\psi$.

We claim that the maps
$A_{l-1}\circ \dots \circ A_{0} :Q_{\tau}\to Q_{\tau}$ and 
$\pi_{\tau}\circ L_{\tau,l-1}\circ \dots \circ L_{\tau,0} :Q_{\tau}\to Q_{\tau}$ agree.
By \autoref{Eq_H}, since $\fts=1$, for $0 \leq i \leq l-1$, we have 
\begin{equation*}
    H_{\tau,i} = \begin{cases}
           F_{p^i\tau} &\text{if } f(p^i\tau^*)\geq 1\\
           V'_{p^i\tau} &\text{if } f(p^i\tau^*)=0.
    \end{cases}
\end{equation*}
Here we are using the fact that if $f(\pits)=0$, then $F_{\pits}=0$.

Recall that $F_{p^i\tau}=\phi_{p^i\tau}+\psi_{p^i\tau}$ and $V'_{p^i\tau}=\check{\phi}_{p^i\tau^*}+\check{\psi}_{p^i\tau^*}$.
Hence, for $0\leq i\leq l-2$:
\begin{itemize}[leftmargin=*]
    \item if $f(p^i\tau^*)\geq 1$ and $f(p^{i+1}\tau^*)\geq 1$, then $H_{\tau,i}=F_{p^i\tau}$ and $H_{\tau,i+1}=F_{p^{i+1}\tau}$; moreover,  we have $F_{p^{i+1}\tau}\circ F_{p^{i}\tau}= F_{p^{i+1}\tau}\circ \phi_{p^i\tau}$;
    \item if $f(p^i\tau^*)=0$ and $f(p^{i+1}\tau^*)=0$, then $H_{\tau,i}=V'_{p^i\tau}$ and $H_{\tau,i+1}=V'_{p^{i+1}\tau}$; moreover, we have $V'_{p^{i+1}\tau}\circ V'_{p^{i}\tau}=  V'_{p^{i+1}\tau}\circ \check{\phi}_{p^i\tau^*}$;
    \item if $f(p^i\tau^*)\geq 1 $ and $ f(p^{i+1}\tau^*)=0$, then $H_{\tau,i}=F_{p^i\tau}$ and $H_{\tau,i+1}=V'_{p^{i+1}\tau}$; moreover,  we have $V'_{p^{i+1}\tau}\circ F_{p^{i}\tau}= V'_{p^{i+1}\tau} \circ \psi_{p^i\tau}$;
    \item if $f(p^i\tau^*)=0 $ and $ f(p^{i+1}\tau^*)\geq 1$, then $H_{\tau,i}=V'_{p^i\tau}$ and $H_{\tau,i+1}=F_{p^{i+1}\tau}$; moreover,  we have $F_{p^{i+1}\tau}\circ V'_{p^{i}\tau}= F_{p^{i+1}\tau}\circ \check{\psi}_{p^i\tau^*}$.
\end{itemize}
Finally, for $i=l-1$:  if $f(p^{l-1}\ts)\geq 1$, then $H_{\tau,l-1}=F_{p^{l-1}\tau}$ and $\pi_{\tau}\circ H_{\tau,l-1}=\phi_{p^{l-1}\tau}$;  if $f(p^{l-1}\ts)=0$, then $H_{\tau,l-1}=V'_{p^{l-1}\tau}$ and $\pi_{\tau}\circ H_{\tau,l-1}=\check{\psi}_{p^{l-1}\ts}$. Combined, the aforementioned equalities for $0\leq i\leq l-1$ imply the claim.

Recall $\dim Q_\tau= f(\tau^*)=1$. Hence, we have reduced the statement to show that $A_{l-1}\circ \dots \circ A_{0}:Q_{\tau}\to Q_{\tau}$, when regarded as a polynomial in $\fpb[x_1, \dots, x_r]$, does not vanish.

Let $0\leq i<l$. With respect to the bases given in Lemma \ref{L_bases}, we regard  $A_i$ as a $a(i+1)\times a(i)$ matrix, where $a(i) = f(\pits) $ if $f(p^{i}\tau^*)\geq 1$, and $a(i)= f(\pit) $ otherwise.
%\begin{equation*}
 %   a(i) = 
  %  \begin{cases}
 %   f(\pits) & \text{ if $A_i=\phi_{\pit}$ or $\check{\psi}_{\pits}$} \\
 %   f(\pit) & \text{ if $A_i=\psi_{\pit}$ or $\check{\phi}_{\pits}$}.\\
 %   \end{cases} 
%\end{equation*}
We denote by $A_i(j',j)$ the $(j',j)^{th}$ entry of the matrix $A_i$, and by $m_i(j',j)=m(A_i(j',j))$ the maximal monomial in $A_i(j',j)$, for $1\leq j'\leq a(i+1)$ and $1\leq j\leq a(i)$ .

Consider the set $\mathfrak{J}$ consisting of all functions $J:\{0,1, \dots, l\} \to \mathbb{N}$
such that $J(0)=J(l)=1$, and $1 \leq J(i) \leq a(i)$,  for $0<i<l$.
For any $J \in \mathfrak{J}$, set $R_{J,i}=A_i(J(i+1),J(i))$ and $T_{J,i}=m_i(J(i+1),J(i))$; we write 
\begin{equation}
R_{J}=\prod_{i=0}^{l-1}R_{J,i}^{p^{l-i-1}} \text{ and } T_J=\prod_{i=0}^{l-1}T_{J,i}^{p^{l-i-1}}.
\end{equation}
Recall that the maps $A_i$ are $\sigma$-linear, $0\leq i\leq l-1$. Hence, we have
\begin{equation}\label{sumRJ}
   \aip=\sum_{J \in \mathfrak{J}}R_J\in \fpb[x_1,\dots,x_r].
\end{equation}
Also, by definition, $T_{J}$ is the maximal monomial of $R_J$.
In section \ref{max monomial}, we show that for the $J$ such that $J(i)=1$ for all $0 \leq i \leq l$, we have $R_{J,i} \neq 0$. Therefore, for this $J$, $R_J$ and $T_J$ are not identically zero. Hence, by \autoref{sumRJ}, to prove that $\aip$ does not identically vanishes, it suffices to show that, for any $J_1, J_2 \in \mathfrak{J}$ such that $R_{J_1} \neq 0, R_{J_2} \neq 0$, if $T_{J_1}=T_{J_2}$ then $J_1=J_2$.

First, we show that if $T_{J_1}=T_{J_2}$, then $v_{x_s}(T_{J_1,i})=v_{x_s}(T_{J_2,i})$, for all $1 \leq s \leq r$ and $0 \leq i \leq l-1$.
For $1 \leq s \leq r$ and $0 \leq i \leq l-1$, denote
$\eis=v_{x_s}(T_{J_1,i})-v_{x_s}(T_{J_2,i})$. 
%Our goal is to show $\eis=0$, for all $i,s$.
Since $T_{J_1}=T_{J_2}$, for each $1 \leq s \leq r$, we have 
$$\sum_{i=0}^{l-1}p^{l-i-1}\eis=v_{x_s}(T_{J_1})-v_{x_s}(T_{J_2})=0.$$
Hence, to prove that $\eis=0$, for all $i,s$, it is enough to prove that  $|\eis|<p$ for all $i,s$. % $0 \leq i \leq l-1$ and $1 \leq s \leq r$, 
We will verify the latter inequalities by direct computations. 
By assumption, $\tau=\tau_b$ for some $1\leq b\leq m-1$ satisfying $\gcd(b,m)=1$. Then, $p^i\tau=\tau_{p^ib}$. We denote
\begin{align*}
\tau_i&=\begin{cases}
    p^i\tau  & \text{if } f(p^i\ts)\geq 1\\
    p^i\ts &\text{if } f(p^i \ts)=0
\end{cases}
,&
b_i&= \begin{cases}
    p^i b  & \text{if } f(p^i\ts)\geq 1\\
    -p^i b &\text{if } f(p^i \ts)=0.
\end{cases}
\end{align*}
Therefore, $A_i$ is one of $\phi_{\tau_i}, \check{\phi}_{\tau_i}$ or $\psi_{\ti}, \check{\psi}_{\ti}$. We distinguish two cases.\\
{\bf Case 1}: Assume $A_i=\phi_{\ti}$ or $\check{\phi}_{\ti}$. Let $M$ be a monomial appearing in $A_i(j',j)$. By \autoref{maxmonomialbouw}, we see that for any $s$, $v_{x_s}(M) \leq \pbis \leq p-2$. In particular, $v_{x_s}(T_{J_1,i}) \leq p-2$, $v_{x_s}(T_{J_2,i}) \leq p-2$. Therefore, $|\eis| \leq p-2< p$. \\
{\bf Case 2}: Assume $A_i=\psi_{\ti}$ or $\check{\psi}_{\ti}$. Let $M$ be a monomial appearing in $A_i(j',j)$. From Proposition \ref{coefficient of psii}, we see that for any $s$, $v_{x_s}(M) \leq \pbis+1 \leq p-1$. In particular, $v_{x_s}(T_{J_1,i}) \leq p-1$, $v_{x_s}(T_{J_2,i}) \leq p-1$. Therefore, $|\eis| \leq p-1< p$. 

Therefore, in all cases, $|\eis| < p$. Hence, if $T_{J_1}=T_{J_2}$, then $\eis=0$ for $0 \leq i \leq l-1$ and $1 \leq s \leq r$. Since $\eis=v_{x_s}(T_{J_1,i})-v_{x_s}(T_{J_2,i})$, we see that $\deg(T_{J_1,i})=\deg(T_{J_2,i})$. We will deduce by backward induction on $i$ that $J_1(i)=J_2(i)$ for all $0 \leq i \leq l $. By definition, $J_1(l)=J_2(l)=1$. Suppose $J_1(i+1)=J_2(i+1)$, we shall deduce that $J_1(i)=J_2(i)$. 
We again distinguish two cases.\\
{\bf Case 1}: Assume $A_i=\phi_{\ti}$ or $\phid_{\ti}$. Then $\deg(T_{J_1,i})=s_{b_i}-pJ_1(i)+J_1(i+1)$, $\deg(T_{J_2,i})=s_{b_i}-pJ_2(i)+J_2(i+1)$. Since $J_1(i+1)=J_2(i+1)$, we deduce that $J_1(i)=J_2(i)$. \\
{\bf Case 2}: Assume $A_i=\psi_{\ti}$ or $\psid_{\ti}$. Then $\deg(T_{J_1,i})=s_{b_i}-pJ_1(i)+r-J_1(i+1)$, $\deg(T_{J_2,i})=s_{b_i}-pJ_2(i)+r-J_2(i+1)$. Since $J_1(i+1)=J_2(i+1)$, we deduce that $J_1(i)=J_2(i)$.
\end{proof}

\section{Main result}\label{new part mu ordinary}
The goal of this section is to prove Theorem~\ref{abelian cover}, and to highlight some of its applications.
In the following, $\gra$ is an abelian monodromy datum, with $r\leq 5$, and $p$ is a rational prime, $p > |G|(r-2)$.
  
%Recall Notation \ref{new part mu ord}: let $\lra$ be a cyclic monodromy datum; we identify $\tg=\Z/l\Z$.
Recall that, given a Frobenius orbit $\OO$ in $\tg$, we denote by 
$f_{\OO,1}<f_{\OO,2}<\dots< f_{\OO,s(\OO)}$  the distinct values in $\FF(\OO)=\{\fts\mid \tau\in \OO\}$. %For each $1\leq u\leq s(\OO)$, we choose $\tau_{\OO,u}\in\OO$ satisfying $f(\tau_{\OO,u}^*)=f_{\OO,u}$.

\begin{lemma}\label{smallest signature}
Given a Frobenius orbit $\OO$, if $\tau\in \OO$ satisfies $f(\tau^*)=f_{\OO,1}$, then $w_{\tau}$ is generically maximal.
\end{lemma}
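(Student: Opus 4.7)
The plan is to invoke Theorem~\ref{numerical criteria} with $u = 1$, exploit the minimality of $f(\tau^*)$ to collapse the relevant iterated map to a single composition of $\phi$'s, and then show the non-vanishing of a determinant by a leading-monomial argument modelled on the closing steps of the proof of Theorem~\ref{signature 1}. By Lemma~\ref{equivalence two} we may reduce to the case where $G$ is cyclic, since Frobenius orbits on $\tg$ and the function $f$ are preserved under the reduction. Then Theorem~\ref{numerical criteria} applied with $u = 1$ and $\tau_1 = \tau$ says that it is enough to show that
\[\dim\bigl(\pi_\tau \circ H_{\tau,l-1} \circ \cdots \circ H_{\tau,0}(M_\tau)\bigr) = f_{\OO,1} = f(\tau^*)\]
on some non-empty open subset of $\mlra(\fpb)$. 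This is a maximal-rank condition, hence cut out by the non-vanishing of a determinantal polynomial in the branch points $x_1, \ldots, x_r$.

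The minimality hypothesis yields $f(p^i\tau^*) \geq f(\tau^*)$ for every $0 \leq i \leq l-1$, so by \autoref{Eq_H} each $H_{\tau,i}$ is simply $F_{p^i\tau}$. The block form \autoref{Eqn: Ftau V'tau} of $F$ vanishes on the $Q_{p^i\tau^*}^{\smvee}$ summand of $M_{p^i\tau}$, so an easy induction on $i$ gives
\[ \pi_\tau \circ F_{p^{l-1}\tau} \circ \cdots \circ F_\tau(M_\tau) = \phi^{(l)}(Q_\tau), \qquad \phi^{(l)} := \phi_{p^{l-1}\tau} \circ \cdots \circ \phi_\tau, \]
where $\phi^{(l)}$ is a $\sigma^l$-semilinear self-map of the $N$-dimensional space $Q_\tau$, with $N = f(\tau^*)$. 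Writing $B$ for the matrix of $\phi^{(l)}$ in the basis of Lemma~\ref{L_bases}, the claim reduces to $\det(B) \not\equiv 0$ as a polynomial in $x_1, \ldots, x_r$.

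For the final step, I would let $A_i$ denote the matrix of $\phi_{p^i\tau}$, of size $f(p^{i+1}\tau^*) \times f(p^i\tau^*)$ with both dimensions $\geq N$, whose entries and leading monomials are given by \autoref{coefficient of phii} and \autoref{maxmonomialbouw}. Iterated Cauchy--Binet, combined with the $\sigma$-linearity of each $\phi_{p^i\tau}$, expands
\[ \det(B) = \sum_{\underline{S}} \prod_{i=0}^{l-1} \det\bigl(A_i[S_{i+1}, S_i]\bigr)^{p^{l-1-i}}, \]
where $\underline{S} = (S_0, \ldots, S_l)$ runs over tuples of $N$-subsets with $S_0 = S_l = \{1, \ldots, N\}$. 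The main obstacle is the combinatorial argument, parallel to the concluding step of the proof of Theorem~\ref{signature 1}: using that $p > m(r-2)$ forces every exponent of $x_s$ appearing in a single entry of $A_i$ to be at most $p - 2$, the base-$p$ expansion of the total $x_s$-degree in a summand uniquely determines the contribution of each factor, hence determines both the pair of subsets $(S_{i+1}, S_i)$ and the permutation used to expand the corresponding minor. Consequently distinct summands in the fully expanded sum produce distinct maximal monomials and cannot cancel. The ``diagonal'' choice $S_i = \{1, \ldots, N\}$ with the identity permutation at each level already contributes a non-zero summand, because its leading monomial is a genuine product of Bouw's leading monomials \autoref{maxmonomialbouw}; therefore $\det(B)$ is a non-zero polynomial, $\phi^{(l)}$ is generically invertible, and $w_\tau$ is generically maximal.
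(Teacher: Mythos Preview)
Your approach is essentially the paper's: apply Theorem~\ref{numerical criteria} with $u=1$, observe that minimality of $f(\tau^*)$ forces every $H_{\tau,i}=F_{p^i\tau}$, and reduce to the generic invertibility of the iterated Hasse--Witt map $\phi_{p^{l-1}\tau}\circ\cdots\circ\phi_\tau$ on $Q_\tau$. The only difference is in the last step: the paper simply cites Bouw's result \cite[Section~6]{Irene} for the non-vanishing of this determinant, whereas you re-derive it via a Cauchy--Binet expansion and a base-$p$ leading-monomial argument (which is in fact how Bouw's proof proceeds, and is the same mechanism the paper uses in the proof of Theorem~\ref{signature 1}). Your preliminary reduction to the cyclic case via Lemma~\ref{equivalence two} is a reasonable bit of extra care; in the paper the cited result of Bouw already covers abelian covers, so no explicit reduction is needed there.
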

\begin{proof}
We apply Theorem \ref{numerical criteria} with $u=1$. It suffices to check that generically $$\dim(\pi_{\tau}\circ L_{\tau, l-1} \circ \dots \circ L_{\tau,0}(M_{\tau}))=f_{\OO,1}.$$ %on the complement of a proper closed subset of $\mlra(\fpb)$. 
Note that $L_{\tau,i}=F_{p^i\tau}$, hence the condition is reduced to showing that $\pi_{\tau} \circ F^{l}: Q_{\tau} \to Q_{\tau}$ is an isomorphism. The main Theorem in \cite[Section 6]{Irene} shows that the determinant of this map is a non-zero polynomial in $\fpb[x_1, \dots, x_r]$. 
%Therefore, the condition only fails on the vanishing locus of the determinant, which is a proper closed subset.
\end{proof}

\begin{corollary}\label{Largest signature}
Given a Frobenius orbit $\OO$, if $\tau\in \OO$ satisfies $f(\tau^*)=f_{\OO,s(\OO)}$, then $w_{\tau}$ is generically maximal.
\end{corollary}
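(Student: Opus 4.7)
The plan is to reduce to Lemma \ref{smallest signature} by passing to the conjugate orbit. Given $\tau\in\OO$ with $f(\tau^*)=f_{\OO,s(\OO)}$, the idea is that the character $\tau^*\in \OO^*$ plays the role, inside the orbit $\OO^*$, of a character at which the signature attains its minimum value; we can then appeal to the already-proved lemma for smallest signatures and transfer the conclusion back to $\tau$ using the symmetry of maximal-length Weyl elements under the involution $\tau\mapsto \tau^*$.

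First, I would unpack how the involution $*$ relates $\FF(\OO)$ and $\FF(\OO^*)$. Since the signature satisfies $f(\sigma)+f(\sigma^*)=g(\OO)$ for every $\sigma\in \tg$, and $g(\OO)=g(\OO^*)$ (the orbits $\OO$ and $\OO^*$ have the same size, and the action of $G$ on the dual Dieudonn\'e pieces is compatible with the polarization), the map $x\mapsto g(\OO)-x$ yields a bijection $\FF(\OO)\to \FF(\OO^*)$ reversing the order. In particular
\[ f_{\OO^*,1}= g(\OO)-f_{\OO,s(\OO)}.\]
Applied to the specific character $\tau^*\in \OO^*$, this gives
\[ f((\tau^*)^*)=f(\tau)=g(\OO)-f(\tau^*)=g(\OO)-f_{\OO,s(\OO)}=f_{\OO^*,1},\]
so $\tau^*$ is a character realizing the smallest signature in $\OO^*$. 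By Lemma \ref{smallest signature} applied to the orbit $\OO^*$ and the character $\tau^*$, the Weyl component $w_{\tau^*}$ is generically maximal.

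Finally, I would invoke Remark \ref{rmk}, which records that $w_\tau$ has maximal length if and only if $w_{\tau^*}$ does. This equivalence is pointwise on $\mlra(\fpb)$, so the non-empty open subset on which $w_{\tau^*}$ is maximal coincides with the one on which $w_\tau$ is maximal. Hence $w_\tau$ is generically maximal as well. I do not anticipate any genuine obstacle here: the argument is a short duality bookkeeping, with all substantive content already packaged into Lemma \ref{smallest signature} and Remark \ref{rmk}; the only point requiring a bit of care is the verification that the involution $*$ induces the order-reversing bijection between $\FF(\OO)$ and $\FF(\OO^*)$, which follows immediately from $f(\sigma)+f(\sigma^*)=g(\OO)$.
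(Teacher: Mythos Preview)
Your proposal is correct and follows essentially the same route as the paper: pass to $\tau^*\in\OO^*$, observe that $f((\tau^*)^*)=f_{\OO^*,1}$, apply Lemma \ref{smallest signature} to conclude $w_{\tau^*}$ is generically maximal, and then transfer back to $w_\tau$ via the symmetry in Remark \ref{rmk}. The paper's proof is terser—it omits the explicit verification that $*$ induces the order-reversing bijection $\FF(\OO)\to\FF(\OO^*)$ and does not name Remark \ref{rmk}—but the argument is the same.
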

\begin{proof}
If $f(\tau^*)=f_{\OO,s(\OO)}$, then $f((\tau^*)^*)=f_{\OO^*,1}$. Therefore, by Lemma \ref{smallest signature}, we know that $w_{\tau^*}$ is maximal. Hence $w_{\tau}$ is also generically maximal.
\end{proof}

\begin{proof}[Proof of Theorem \ref{abelian cover}]

 By combining Lemma \ref{equivalence two} %to prove Theorem~\ref{abelian cover}, it is enough to prove that, given a cyclic monodromy datum $\lra$, with $r\leq 5$,
%there exists a non-empty open subset $U \subseteq \mlra$ such that for all $\xb \in U$, $J(\ct)^\n$ is $\mu$-ordinary, or equivalently,
and \cite[Theorem 1.3.7]{Moonen EO type formula}, it suffices to show that that $\wt$ is generically maximal, for each $\tau\in \tg$ satisfying $\ker(\tau)=\{1\}$.

Let $\OO$ be a Frobenius orbit.
Recall $s(\OO)$ denotes the number of distinct values in 
 $\FF(\OO)$, and  $g(\OO)=f(\tau)+f(\tau^*)$  for all $\tau\in \OO$. In particular, $s(\OO)\leq g(\OO)+1$.

Assume any/all $\tau \in \OO$ satisfy $\ker(\tau)=\{1\}$.
Then, $g(\OO)=r-2$.
If $r\leq 3$, the statement holds trivially. We distinguish the cases $r=4 $ and $r=5$.

If $r=4$,
then $g(\OO)=2$ and $s(\OO)\leq 3$.
%$\FF(\overline{\OO})=\{2-a\mid a\in \FF(\OO)\}$. 
%\begin{theorem}\label{r=4 main result}
%Assume $r=4$.
%Suppose we are given a Frobenius orbit $\OO$ such that $\ker(\tau)=1$ for every $\tau \in \OO$. Then for every $\tau \in \OO$, $w_{\tau}$ is generically maximal.
%\end{theorem}
%\begin{proof}
%We make cases based on $\FF(\OO)$.
 If $s(\OO)=1$, $\FF(\OO)=\{f_{\OO,1}\}$, then any $\tau\in\OO$ satisfies $f(\tau^*)=f_{\OO,1}$ and $w_\tau$ is generically maximal by Lemma \ref{smallest signature}. %that $w_{\tau}$ is generically maximal.
 If $s(\OO)=2$, $\FF(\OO)=\{f_{\OO,1},f_{\OO,2}\}$, $w_\tau$ is generically maximal by Lemma \ref{smallest signature} if $f(\tau^*)=f_{\OO,1}$ and by Corollary \ref{Largest signature} if $f(\tau^*)=f_{\OO,2}$. %that $w_{\tau}$ is generically maximal.
 If  $s(\OO)=3$, then $\FF(\OO)=\{0,1,2\}$ and $w_\tau$ is generically maximal by Theorem \ref{signature 1} if 
    $f(\tau^*)\in\{0,1\}$, and  % that $w_{\tau}$ is generically maximal. 
     by Corollary \ref{Largest signature} if  $f(\tau^*)=2$, .% that $w_{\tau}$ is generically maximal.\qedhere

%\end{proof}

If $r=5$, then $g(\OO)=3$ and $s(\OO)\leq 4$.
%$\FF(\overline{\OO})=\{3-a\mid a\in \FF(\OO)\}$.
%\begin{theorem}\label{rfivemax}
%Assume $r=5$. Given a Frobenius orbit $\OO$ such that $\ker(\tau)=\{1\}$ for every $\tau \in \OO$. Then for every $\tau \in \OO$, $w_{\tau}$ is generically maximal.
%\end{theorem}
%\begin{proof}
%We make cases based on $\FF(\OO)$.
If $s(\OO)=1$, the statement follows from Lemma \ref{smallest signature}; 
  if $s(\OO)=2$, it follows from Lemma \ref{smallest signature} and Corollary \ref{Largest signature}.
  If $s(\OO)=3$,  then there are four possibility for $\FF(\OO)$. If $\FF(\OO)=\{0,1,2\}$ or $\FF(\OO)=\{0,1,3\}$, the statement follows direcltly from Theorem \ref{signature 1} and Corollary \ref{Largest signature}.
    %%\item , it follows from Theorem \ref{signature 1} and Corollary \ref{Largest signature}.
  If $\FF(\OO)=\{0,2,3\}$ or $\FF(\OO)=\{1,2,3\}$, then $\FF(\OO^*)=\{0,1,3\}$ or $\FF(\OO^*)=\{0,1,2\}$ respectively,
    and the statement follows from the previous instances, since $w_{\tau}$ is maximal if and only if $w_{\tau^*}$ is maximal (see Remark \ref{rmk}).
    %. Therefore, by Case 4, for all $\tau\in \overline{\OO}$, we have $w_{\tau}$ is generically maximal. Now given $\tau\in \OO$, we have $\tau^*\in \overline{\OO}$, so $w_{\tau^*}$ is generically maximal and hence $w_{\tau}$ is also generically maximal.
   % \item If $\FF(\OO)=\{1,2,3\}$. In this case $\FF(\overline{\OO})=\{0,1,2\}$. Therefore, by Case 3, for all $\tau\in \overline{\OO}$, we have $w_{\tau}$ is generically maximal. Now given $\tau\in \OO$, we have $\tau^*\in \overline{\OO}$, so $w_{\tau^*}$ is generically maximal and hence $w_{\tau}$ is also generically maximal.
    If $s(\OO)=4$, then $\FF(\OO)=\{0,1,2,3\}$, and by Theorem \ref{signature 1}, either $w_{\tau}$ or  
    %Next, notice that $\FF(\overline{\OO})=\{0,1,2,3\}$. 
     $w_{\tau^*}$ is generically maximal, hence they both are by Remark \ref{rmk}. \qedhere

%\end{proof}

\end{proof}

\begin{remark}  
The restriction $r\leq 5$ does not imply a bound on the genus of the covers. That is, our result applies to families of curves with arbitrarily large genus. For example, given a cyclic monodromy datum $\lra$, if all $a(j)$ are co-prime to $m$, then 
the genus of the curves parameterized by $\mlra$ is:
$$g=g(m,r,\au)=1 +
\frac{(r-2)m-\sum_{j=1}^r \gcd(a(j),m)}{2}= 1+\frac{(r-2)m-r}{2}.$$
In particular,  $g\lra$ can grow linearly with $m$. 

Also, while the restriction $r\leq 5$ implies $\dim(\mlra)=r-3\leq 2$, it does not imply a bound on the dimension of the Deligne--Mostow Shimura variety $\shgf$. That is, our result applies to families of curves in ambient Shimura varieties of arbitrarily large dimension.   For example,
    consider the cylic monodromy datum $(m,4,(1,1,1,m-3))$, where $3 \nmid m$. Then $\dim(\mlra)=1$, while  
    %the dimension of $\shgf$  is 
    %$$    \dim(\shmf) = 
    %\begin{cases}
     %   \sum_{i=1}^{m-1}f(\ti)f(\tis)+\frac{f(\tau_{k})f(\tau_k)+1}{2} & \text{if } l=2k \\
      %  \sum_{i=1}^{m-1}f(\ti)f(\tis) & \text{if }l \text{ is odd} 
    %\end{cases}$$
    %In particular, $\dim(\shmf)$ is at least $\sum_{i=1}^{m-1}f(\ti)f(\tis)$. For $\au=(1,1,1,l-3)$,
    $\dim(\shmf)\geq \floor{\frac{m+1}{3}}$, which grows linearly with $m$. 
    %Thus, for large $l$, $\dim(\mlra) \ll \dim(\shmf)$, and by Theorem \ref{abelian cover} we still have that $T(\mlra)$ intersects the $\mu$-ordinary locus of $\shmf$ non-trivially for any $p>2l$.

\end{remark}

\begin{remark}\label{newNP}
Theorem \ref{abelian cover} provides new examples of Newton polygons occurring for smooth curves (e.g., polygons with slopes with large denominators).
For example, consider the monodromy datum 
$(23,5,(1,1,1,2,18))$ and a prime $p$ that is inert in $\Q(\zeta_{23})/\Q$. Then, the corresponding cyclic covers of $\Po$ have genus $g=33$, and the associated $\mu$-ordinary Newton polygon at $p$ is $u=(\frac{2}{11},\frac{9}{11})^{2} \bigoplus (\frac{1}{2}, \frac{1}{2})^{11}$.
%, and by \cite[Formula 8.1]{clutching argument},  $\dim(\mg)= 96<\text{codim}(\ag[\nu],\ag)=136$.
By Theorem \ref{abelian cover}, for $p> 69$, there exists a smooth curve over $\fpb$ with Newton polygon $u$.
%, and the Torelli locus has an unlikely intersection with $\ag[\mu]$ at $p$.
%By Theorem \ref{abelian}, for $p$ a prime $p>69$ and inert in $\Q(\zeta_{23})$ instance of an unlikely intersection in $\ag$, of the Torelli locus with the Newton stratum $\ag[\mu]$. In this case, 
\end{remark}

\begin{remark}\label{anyrl} 
For any $r>5$ and any $m>1$, Theorem \ref{abelian cover}, together with \cite[Proposition 4]{clutching argument}, yields examples of cyclic monodromy data, of degree $m$ and with $r$ branched points, that satisfy the statement  of Theorem \ref{abelian cover}. 
More precisely, in \cite[Proposition 4]{clutching argument}, the authors show that if $\gamma=\lra$ is a cyclic monodromy datum satisfying the statement of Theorem \ref{abelian cover} then, for any $1\leq c\leq m-1$, the monodromy datum $\gamma_c=(m, r+2, {\au}_c)$, where ${\au}_c=(c,m-c,\au_1, \dots, \au_r)$,  also satisfy the statement. Hence, by Theorem \ref{abelian cover}, starting with a cyclic monodromy datum $\lra$ with $4\leq r\leq 5$, one can construct an infinite inductive system of cyclic monodromy data, of degree $m$ and with $r+2t$ branched points, for $t\geq 1$, that satisfy the statement of Theorem \ref{abelian cover}. 
Similarly, one can combine Theorem \ref{abelian cover} with other results in \cite[Section 4]{clutching argument}
to obtain different infinite systems of monodromy data, with an arbitrarily large number of branched points, that satisfy the statement of Theorem \ref{abelian cover} (see Remark \ref{unlikely} for another example of such a construction).
\end{remark}

\begin{remark}\label{unlikely}
Theorem \ref{abelian cover} combined with results in \cite{clutching argument} 
yields new instances of unlikely intersections of the Torelli locus with Newton strata of the Siegel variety.  Following \cite[Definition 8.2]{clutching argument}, we say that the Torelli locus has an {\em  unlikely intersection at $p$} with the Newton polygon stratum $\ag[\nu]$ in $\ag$  if there exists a smooth curve over $\fpb$ of genus $g$ with Newton polygon $\nu$ and $\dim \mg < \text{codim}(\ag[\nu],\ag)$. 
 For example, given a cyclic monodromy datum $\gamma=(m, r, \au)$ with $4\leq r\leq 5$, by Theorem \ref{abelian cover}, \cite[Corollary 4.14]{clutching argument} produces an infinite inductive system of cyclic monodromy data $\{ \gamma_n=(m, r_{n}, \au_{n})\mid n\geq 1\}$,  with $r_{n}= n(r+2)$,  that satisfy the statement of Theorem \ref{abelian cover}. Furthermore, for $p>m(r-2)$, if the $\mu$-ordinary Newton polygon at $p$ associated with $\gamma$ is not ordinary, then by \cite[Proposition 8.5]{clutching argument}, the inductive system yields (infinitely many)  unlikely intersections at $p$ of the Torelli locus with  Newton strata in Siegel varieties. More precisely, if $u$ denotes the $\mu$-ordinary Newton polygon at $p$ associated with $\gamma$, for $p>m(r-2)$, and $u\neq \ord^g$, then  once $n$ is sufficiently large, the Torelli locus has an unlikely intersection at $p$ with the Newton stratum $\ag[u^n+\ord^{n(m-1)}]$.
Continuing the example in Remark \ref{newNP}, for $\gamma=(23, 5, (1,1,1,2,18))$, $p>69$ a prime inert in $\Q(\zeta_{23})/\Q$, $g=33$ and $u=(\frac{2}{11},\frac{9}{11})^{2} \bigoplus (\frac{1}{2}, \frac{1}{2})^{11}$, by \cite[Formula 8.1]{clutching argument},  $\dim(\mg)= 96<\text{codim}(\ag[u],\ag)=136$ and the Torelli locus has an unlikely intersection at $p$ with the Newton stratum $\ag[u]$. Furthermore, by \cite[Proposition 8.4]{clutching argument}, for any $n\geq 1$, the Torelli locus has an unlikely intersection at $p$ with the Newton stratum $\mathcal{A}_{g_n}[u^n+\ord^{n(m-1)}]$, where $g_n=n(m+32)$. 
\end{remark}

\subsection*{Acknowledgements}
\thanks{

We would like to thank Rachel Pries for many helpful discussions that have motivated the pursuit of this problem. Mantovan was partially supported by NSF grant DMS-22-00694.
}


\begin{thebibliography}{}
\bibitem{moduli functor G cover}
J.D. Achter, R. Pries,
The integral monodromy of hyperelliptic and trielliptic curves.
Math. Ann. 338, 187–206 (2007). https://doi.org/10.1007/s00208-006-0072-0

\bibitem{Irene}
I.I. Bouw, 
The p-Rank of Ramified Covers of Curves, Compositio Mathematica, 126, 295–322, 2001. https://doi.org/10.1023/A:1017513122376

\bibitem{delignmostow}
P. Deligne, G.D. Mostow,
Monodromy of hypergeometric functions and non-lattice integral monodromy.
Publications Mathématiques de l’Institut des Hautes Scientifiques 63, 5–89 (1986). https://doi.org/10.1007/BF02831622


\bibitem{Chevalley}
P. Frediani, A. Ghigi, M. Penegini,
Shimura Varieties in the Torelli Locus via Galois Coverings,
International Mathematics Research Notices, Volume 2015, Issue 20, 2015, Pages 10595–10623, https://doi.org/10.1093/imrn/rnu272

\bibitem{Gonzalez}
J. Gonzales,
Hasse-Witt matrices for the Fermat curves of prime degree,
Tohoku Mathematical Journal, Second Series, 49 (2), 149-163, 1997.
10.2748/tmj/1178225144

\bibitem{Kottwitz}
R. E. Kottwitz,
Isocrystals with additional structure. Compositio Mathematica, Volume 56 (1985) no. 2, pp. 201-220. 

\bibitem{Second Paper}
W. Li, E. Mantovan, R. Pries and Y. Tang,
Newton polygons arising from special families of cyclic covers of the projective line, Research in Number Theory, 5, 12 (2019). 10.1007/s40993-018-0149-3.

\bibitem{clutching argument}
W. Li, E. Mantovan, R. Pries, Y. Tang,
Newton Polygon Stratification of the Torelli Locus in Unitary Shimura Varieties,
International Mathematics Research Notices, Volume 2022, Issue 9, May 2022, Pages 6464–6511, https://doi.org/10.1093/imrn/rnaa306


\bibitem{Moonen algorithm}
B. Moonen,
Computing discrete invariants of varieties in positive characteristic I. Ekedahl-Oort types of curves. Journal of Pure and Applied Algebra, 226 (11), 2022,
107100,
ISSN 0022-4049,
https://doi.org/10.1016/j.jpaa.2022.107100

\bibitem{Moonen 10}
B. Moonen,  
Special subvarieties arising from families of cyclic covers of the projective line. Doc. Math. 15 (2010), 793–819.

\bibitem{Moonen EO type formula}
B. Moonen,
Serre-Tate theory for moduli spaces of PEL type.
Annales scientifiques de l'École Normale Supérieure, Serie 4, Volume 37 (2004) no. 2, pp. 223-269. doi : 10.1016/j.ansens.2003.04.004.

\bibitem{Moonen group scheme}
B. Moonen,
Group Schemes with Additional Structures and Weyl Group Cosets. In: Faber, C., van der Geer, G., Oort, F. (eds) Moduli of Abelian Varieties.
Progress in Mathematics, vol 195 (2001). Birkhäuser, Basel. https://doi.org/10.1007/978-3-0348-8303-0-10

\bibitem{Moonen dimension formula}
B. Moonen,
A dimension formula for Ekedahl-Oort strata,
Annales de l’institut Fourier 54.3 (2004): 666-698. http://eudml.org/doc/116122.

\bibitem{Moonen Oort}
B. Moonen, F. Oort,
The Torelli locus and special subvarieties.  In Handbook of Moduli: Volume II, pages 549–94.
International Press, Boston, MA, 2013.

\bibitem{special family}
B. Moonen,
Special subvarieties arising from families of cyclic covers of the projective line, Doc. Math. 15 (2010),
793–819. MR 2735989

\bibitem{Rapoport-RIcharts}
M. Rapoport, M. Richartz,
On the classification and specialization of F-isocrystals with additional structure,
Compositio Math. 103 (1996), no. 2, 153–181. MR 1411570

\bibitem{Dual basis}
J. Stienstra, M. van der Put,
On p-adic monodromy, Mathematische Zeitschrift 208.2 (1991): 309-326. http://eudml.org/doc/174319.


\bibitem{viehmann-wedhorn}
E. Viehmann, T. Wedhorn,
Ekedahl–Oort and Newton strata for Shimura varieties of PEL type.
Math. Ann. 356, 1493–1550 (2013). https://doi.org/10.1007/s00208-012-0892-z







\bibitem{Wedhorn}
T. Wedhorn,
Ordinariness in good reductions of shimura varieties of PEL-type,
Annales Scientifiques de l’École Normale Supérieure,
Volume 32, Issue 5,
1999,
Pages 575-618,
ISSN 0012-9593,
https://doi.org/10.1016/S0012-9593(01)80001-X.








 
\end{thebibliography}
\end{document}